\def\supp{{\rm Supp}}
\newtheorem{theorem}{Theorem}[section]
\newtheorem{proposition}[theorem]{Proposition}
\newtheorem{lemma}[theorem]{Lemma}
\newtheorem{corollary}[theorem]{Corollary}
\theoremstyle{definition}
\newtheorem{remark}[theorem]{Remark}
\newtheorem{definition}[theorem]{Definition}
\newtheorem{example}[theorem]{Example}
\numberwithin{equation}{section}
\begin{document}

\title{$\mathbf{f}$-zpd algebras and a multilinear Nullstellensatz}

\author{Žan Bajuk}
\address{Faculty of Mathematics and Physics, University of Ljubljana, Slovenia}
\email{zanbajuk@gmail.com}

\author{Matej Bre\v{s}ar}
\address{Faculty of Mathematics and Physics, University of Ljubljana \&
Faculty of Natural Sciences and Mathematics, University of Maribor \& IMFM, Ljubljana, Slovenia}
\email{matej.bresar@fmf.uni-lj.si, matej.bresar@um.si}

\author{Pedro Fagundes}
\address{Department of Mathematics, State University of Campinas, S\'ergio Buarque de Holanda 651, 13083-859 Campinas, SP, Brazil}
\email{pedro.fagundes@ime.unicamp.br}

\author{Antonio Ioppolo}
\address{Dipartimento di Ingegneria e Scienze dell'Informazione e Matematica, Universit\`a degli Studi dell'Aquila, Via Votoio, 67100, L'Aquila, Italy}
\email{antonio.ioppolo@univaq.it}
\thanks{M.\ Brešar was 
 supported by Grant P1-0288, ARIS (Slovenian Research and Innovation Agency).  P.\ Fagundes was supported by Grant 2019/16994-1 and Grant 2022/05256-2, São Paulo Research Foundation (FAPESP). A.\ Ioppolo was  supported by GNSAGA of INdAM and by Premio Giovani Talenti Bicocca 2021}

\subjclass[2020]{16R20, 16S50, 16U40, 15A86}

\keywords{
Zero product determined, zpd algebra,  $f$-zpd algebra, matrix algebra, 
multilinear polynomial, polynomial identity, central polynomial, Nullstellensatz}

\begin{abstract}
Let $f=f(x_1,\dots,x_m)$ be a multilinear polynomial over a field $F$.   An $F$-algebra $A$ is said to be $f$-zpd ($f$-zero product determined) if every $m$-linear
functional $\varphi \colon A^m\to F$ which preserves zeros of $f$ is of the form $\varphi(a_1,\dots,a_m)=\tau\left(f(a_1,\dots,a_m)\right)$ for some linear functional $\tau$ on $A$. 
We are primarily interested in the question whether the matrix algebra $M_d(F)$ is $f$-zpd. 
While the answer is negative in general, 
we provide several families of polynomials for which it is positive. We also consider a related problem on the form of a multilinear polynomial
$g=g(x_1,\dots,x_m)$
with the property that every zero
of $f$ in $M_d(F)^m$ is a zero of $g$. Under the assumption that $m < 2d-3$, we show that $g$ and $f$ are linearly dependent.
\end{abstract}

\maketitle

\section{Introduction}
 
Throughout the paper, by an algebra we  mean an associative, unital algebra over a fixed field $F$. 
By $F\langle x_1, x_2,\dots\rangle$ we denote the free algebra in indeterminates $x_1, x_2,\dots$, and by $M_d(A)$ we denote the algebra of all $d\times d$ matrices over the algebra $A$. We tacitly assume  that $d\ge 2$.

We say that an
algebra $A$ is   {\em zero product determined}, zpd for short,    if every   bilinear functional $\varphi \colon A\times A\to F$ with the property that
$\varphi(a,b)=0$ whenever
 $ab=0 $  is of the form $\varphi(a,b)=\tau(ab) $ for some linear functional $\tau$. 
  Replacing in this definition the role of the ordinary product $ab$ by the Lie product
  $[a,b]=ab-ba$ (resp.\ the Jordan product $a\circ b= ab +ba$), we speak about the {\em zero Lie product determined algebra}, zLpd for short  (resp.\  {\em zero Jordan product determined algebra}, zJpd for short). 
 These three classes of  algebras  are the subject  of the  book \cite{Bresar2021}.  Their study was mainly motivated by a variety of applications to different mathematical areas.
 
 In this paper, we initiate the study of a more general notion
which we define by replacing the role of the (ordinary, Lie, Jordan) product in the above definitions
 by the evaluation of  an arbitrary multilinear polynomial. This may be viewed as a part of the general program of understanding zeros and images of noncommutative polynomials.

Let us make this precise. Recall 
 first  that a 
 noncommutative polynomial
 $f=f(x_1, \dots, x_m)
 \in F\langle x_1,x_2,\dots\rangle$
 is said to be 
 multilinear if each indeterminate $x_i$, $i=1,\dots,  m$, appears exactly once in each   monomial of $f$. That is, $f$ can be written as
 $$f=\sum_{\sigma\in S_m}\alpha_\sigma x_{\sigma(1)}x_{\sigma(2)}\cdots x_{\sigma(m)} $$ with
 $\alpha_\sigma\in F$, where $S_m$ is the symmetric group on $m$ elements. We will consider $m$-linear functionals $\varphi \colon A^m\to F$ that {\em preserve  zeros of $f$} (i.e., $\varphi$ satisfies (\ref{mainhyphotesis}) below). A natural example of such a  functional is obtained by composing 
a linear functional and the evaluation of $f$  in $A^m$ (see (\ref{linearfunctional}) below), and we will be interested in algebras $A$ in which there are no other examples. Our definition thus reads as follows.

\begin{definition} \label{generalproblem}
Let $f=f(x_1,\dots,x_m)$ be a multilinear polynomial. An $F$-algebra $A$ is said to be {\em $f$-zero product determined},
{\em $f$-zpd} for short,  if for every
$m$-linear functional
$\varphi \colon A^m\to F$  with the property that for all $a_{1},\dots,a_{m} \in A$,
\begin{equation}\label{mainhyphotesis}
f(a_{1},\dots, a_{m})=0 \implies \varphi(a_{1},\dots, a_{m})=0,
\end{equation} there exists a linear functional $\tau \colon A \to F$ such that
\begin{equation}\label{linearfunctional}
\varphi(a_{1}, \dots, a_{m})=\tau\left(f(a_{1}, \dots, a_{m})\right)  
\end{equation}
for all $a_1,\dots,a_m\in A$.
\end{definition}

Of course, 
$x_1x_2$-zpd is the same as zpd, 
$[x_1,x_2]$-zpd 
is the same as zLpd,
and 
$x_1\circ x_2$-zpd 
is the same as zJpd.  Let us first state a result which follows by combining several theorems from \cite{Bresar2021}.

\begin{theorem}\label{zLpdzJpd} \cite[Theorems 2.15, 3.10, and 3.15]{Bresar2021}
 Any  algebra $A$ generated by idempotents is  zpd and, provided that ${\rm char}(F)\ne 2$, also zJpd. Further, if $A$ is any  zLpd algebra, then  the matrix algebra $M_d(A)$, $d\ge 2$, is also zLpd. Consequently, the algebra  $M_d(F)$,  $d\ge 2$, is zpd, zLpd, and zJpd.
\end{theorem}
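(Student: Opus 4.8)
The plan is to prove each assertion in turn, reducing everything to the case of matrix algebras at the end. The statement bundles together four claims: (i) an algebra generated by idempotents is zpd; (ii) if $\mathrm{char}(F)\ne 2$, such an algebra is zJpd; (iii) $M_d(A)$ is zLpd whenever $A$ is zLpd; and (iv) consequently $M_d(F)$ is zpd, zLpd, and zJpd. Since the theorem explicitly cites \cite[Theorems 2.15, 3.10, and 3.15]{Bresar2021}, my job is really to explain how these three cited results combine, not to reprove them from scratch; but let me sketch what each piece requires.

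For (i), the key reduction is that the zpd property can be tested on a spanning set built from idempotents. Given a bilinear $\varphi$ vanishing on pairs with $ab=0$, I would define $\tau$ on the subspace $A^2=\mathrm{span}\{ab\}$ and show it is well-defined, i.e.\ that $\sum a_ib_i=0$ forces $\sum\varphi(a_i,b_i)=0$. For an idempotent $e$ one has the Peirce decomposition $A=eAe\oplus eA(1-e)\oplus(1-e)Ae\oplus(1-e)A(1-e)$, and the zero-product hypothesis applied to products like $e\cdot(1-e)=0$ controls $\varphi$ on the off-diagonal pieces; an induction on a generating set of idempotents then pins down $\tau$. For (ii), the Jordan case $\mathrm{char}(F)\ne 2$ runs parallel: the identity $e\circ(1-e)=2e-1$ together with $2$ being invertible lets one recover the analogous Peirce-type relations for the symmetric product. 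These are exactly the arguments encapsulated in Theorems 2.15 and 3.10, so I would cite them and only indicate the idempotent-Peirce mechanism rather than grind through it.

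The most substantial piece is (iii), the transfer of the zLpd property from $A$ to $M_d(A)$, which is Theorem 3.15. Here the plan is as follows. Let $\varphi\colon M_d(A)\times M_d(A)\to F$ be bilinear and vanish on commuting-to-zero pairs, i.e.\ $\varphi(X,Y)=0$ whenever $[X,Y]=0$. I want $\tau$ with $\varphi(X,Y)=\tau([X,Y])$. The natural strategy is to exploit the matrix units $e_{ij}$: one tests $\varphi$ on pairs $aE_{ij}$ and $bE_{kl}$ (with $a,b\in A$ embedded diagonally and $E_{ij}$ the matrix units), using that $[aE_{ij},bE_{kl}]=0$ in many index configurations, to reduce the $d\times d$ problem to relations among the diagonal blocks, then invoke that $A$ itself is zLpd to handle those blocks. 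The combinatorial bookkeeping over the index pairs $(i,j)$ is where the real work lies, and this is the step I expect to be the main obstacle: keeping track of which commutators vanish and assembling the block data into a single linear functional $\tau$ on $M_d(A)$ requires care, and this is precisely what Theorem 3.15 provides.

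Finally, (iv) is immediate from the first three: the field $F$ is trivially zLpd (commutativity makes the hypothesis vacuous beyond scalars), so by (iii) the algebra $M_d(F)=M_d(F)$ is zLpd for every $d\ge 2$; and $M_d(F)$ is well known to be generated by its idempotents—indeed the matrix units $E_{ii}$ are idempotents and together with the $E_{ij}=E_{ij}$ obtainable from sums like $E_{ii}+E_{ij}$ being idempotent-related elements one spans the whole algebra—so by (i) and (ii) it is zpd and, when $\mathrm{char}(F)\ne 2$, zJpd. I would therefore close by remarking that all four conclusions for $M_d(F)$ follow by specializing the general transfer and idempotent results, with no additional argument needed.
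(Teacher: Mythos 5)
Your proposal is correct and takes essentially the same route as the paper: the paper offers no proof beyond citing Theorems 2.15, 3.10, and 3.15 of \cite{Bresar2021}, and the ``Consequently'' clause is exactly the combination you spell out ($M_d(F)$ is generated by idempotents, hence zpd and, for ${\rm char}(F)\ne 2$, zJpd; the commutative algebra $F$ is trivially zLpd, hence $M_d(F)$ is zLpd by the transfer result). Your sketches of the cited results' proofs are a bonus the paper does not attempt, and your closing combination argument is precisely the intended one.
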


In other words,
$M_d(F)$ is $f$-zpd if $f$ is any of the polynomials $x_1x_2$, $[x_1,x_2]$, and $x_1\circ x_2$. The first question one might ask is whether  
$M_d(F)$ is $f$-zpd for every multilinear polynomial $f$. However, in Section \ref{s2}
we show that the answer is negative (Proposition \ref{matricesnotfzpd}). The example we give is based on an arbitrary central polynomial. As such polynomials exist in abundance, the question of whether or not $M_d(F)$ is $f$-zpd   is more delicate than we, admittedly, initially expected.
In Section \ref{s3}, we present three approaches yielding positive answers  for some polynomials $f$. The first one is based on constructing new polynomials for which the answer is positive from old. The main result, Theorem \ref{invariantunderendomorphisms}, %involves conditions that are related to the Lvov-Kaplansky conjecture \cite{R}. It  
concerns general algebras and is rather technical, but has some easily stated corollaries.  For example, under a mild assumption on char$(F)$, $M_d(F)$ is $f$-zpd for every multilinear Lie monomial $f$ (Corollary \ref{multliemonomialszpd}) as well as for every  multilinear Jordan monomial $f$ (Corollary \ref{l}).  The second approach is applicable to algebras that are generated by idempotents, of which $M_d(F)$ is a particular case. We show that such algebras are $f$-zpd for certain polynomials $f$ arising from cyclic permutations (Theorem \ref{Zanpolynomial}),  thereby extending the first assertion of  
Theorem \ref{zLpdzJpd}. The third approach is direct, computational, and is used only for the generalized commutator
$f=x_1x_2x_3 - x_3x_2x_1$ (Theorem \ref{existenceoftau}), which we believe to be of special interest but is not covered by the first two more sophisticated approaches.

In Section \ref{s4}, we consider an apparently independent, but clearly similar (and  somewhat easier) problem of relating two multilinear polynomials 
$f=f(x_1,\dots,x_m)$ and $g=g(x_1,\dots,x_m)$ with the property that every zero of $f$ in $M_d(F)^m$ is also a zero of $g$. This condition is a special case of the one from Amitsur's Nullstellensatz \cite{Am} which, however, is a direct generalization of Hilbert's Nullstellensatz  and so the conclusion involves a power of the polynomial $g$. Since $M_d(F)$ contains nonzero nilpotents,  Amitsur's Nullstellensatz gives only a  necessary condition for the zero set inclusion.  The problem that we addressed is to provide, in the aforementioned special case of multilinear polynomials $f$ and $g$, a condition that is both necessary and sufficient. A natural candidate is the condition 
 that $g$ is the sum of a scalar multiple of $f$ and a polynomial identity.  However, essentially the same example as the one showing that $M_d(F)$ is not always $f$-zpd 
also shows that this is not always the case; moreover,
the polynomials $f$ and $g$ from our example have the same zero sets 
(Proposition \ref{propertyofg}). To obtain the desired conclusion, we need the additional assumption that $m < 2d-3$. Since  polynomial identities cannot occur in this case,  the result is simply that $f$ and $g$ are linearly dependent (Theorem
\ref{Null}).

\section{A counterexample for the general case}\label{s2}

We start by recalling some standard definitions and facts from the theory of polynomial identities. The reader can find them in any standard book on the subject. 

We say that a polynomial $p=p(x_1,\dots,x_n)\in F\langle x_1,x_2,\dots\rangle $  is a {\em polynomial identity}
of an $F$-algebra $A$ if
$p(a_1,\dots,a_n)=0$ for all $a_1,\dots,a_n\in A$. The matrix algebra $M_d(F)$ has no polynomial identities of degree less than  $2d$, and satisfies a multilinear polynomial identity of degree $2d$, namely the standard polynomial identity $$s_{2d}= \sum_{\sigma\in S_{2d}} {\rm sgn}(\sigma) x_{\sigma(1)}x_{\sigma(2)}\cdots x_{\sigma(2d)}.$$ A polynomial $c=c(x_1,\dots,x_n)\in F\langle x_1,x_2,\dots\rangle $ with zero constant term is said to be a {\em central polynomial} of $A$ 
if 
$c(a_1,\dots,a_n)$ lies in the center of $A$ for all $a_1,\dots,a_n\in A$, but $c$ is not a polynomial identity of $A$. Central polynomials of $M_d(F)$ exist for every $d\ge 2$.
%(see \cite{Formanek, Razmyslov}). 
The question of what is the minimal degree $m_d$ of a central polynomial of $M_d(F)$ is open, but it is known that $2d\le m_d\le d^2$. 

Until the rest of this section, assume that the field $F$ has more than $3$ elements. Pick
$\alpha,\beta \in F\setminus \{0,-1\}$ with $\alpha\ne \beta$. 
Fix $d\ge 2$ and  a   multilinear central polynomial
 $c=c(x_1,\dots,x_n)$ of $M_d(F)$, and  define  multilinear polynomials $h_1,h_2,f,g$ of degree $m=n+1$  by
\begin{align*} h_1 &= c(x_1,\dots,x_{m-2},x_{m-1})x_{m},\\
h_2&= c(x_1,\dots,x_{m-2},x_{m})x_{m-1},\\
f &=h_1+ \alpha h_2,\\
g&=h_1+\beta h_2.\end{align*}

\begin{example}\label{e5}
If $d=2$, then we can take
\begin{equation*}\label{xx}c=[x_1,x_2][x_3,x_4] + [x_3,x_4] [x_1,x_2] \end{equation*}
which is a  central polynomial of  minimal degree.
Then $h_1,h_2,f,g$ are of degree  $m=5$. For example,
\begin{align*}
f= [x_1,x_2][x_3,x_4]x_5 +& [x_3,x_4] [x_1,x_2]x_5 \\+
 &\alpha [x_1,x_2][x_3,x_5]x_4 + \alpha [x_3,x_5] [x_1,x_2]x_4.
 \end{align*}
\end{example}

The next proposition is related to the multilinear Nullstellensatz from Section \ref{s4}.

\begin{proposition}
    \label{propertyofg}
    Let $a_1,\dots,a_{m}\in M_d(F)$. The following conditions are equivalent:
    \begin{enumerate}
        \item[(i)] $f(a_1,\dots,a_{m})=0$.
         \item[(ii)] $g(a_1,\dots,a_{m})=0$.
        
        \item[(iii)] $h_1(a_1,\dots,a_{m})=h_2(a_1,\dots,a_{m})=0$.
      
    \end{enumerate}
   In particular,
   $f$ and $g$ have the same zero sets. 
However, $g$ is not the sum of a scalar multiple of $f$ and a polynomial identity.
\end{proposition}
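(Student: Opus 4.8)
The plan is to reduce everything to a single linear functional attached to the central polynomial $c$. Fix $a_1,\dots,a_m\in M_d(F)$. Since $c$ is central and multilinear, the map $y\mapsto c(a_1,\dots,a_{m-2},y)$ takes values in the center $F\cdot I$, so there is a linear functional $\ell=\ell_{a_1,\dots,a_{m-2}}$ on $M_d(F)$ with $c(a_1,\dots,a_{m-2},y)=\ell(y)I$. Substituting, I get $h_1(a_1,\dots,a_m)=\ell(a_{m-1})\,a_m$ and $h_2(a_1,\dots,a_m)=\ell(a_m)\,a_{m-1}$, whence $f(a_1,\dots,a_m)=\ell(a_{m-1})a_m+\alpha\,\ell(a_m)a_{m-1}$ and $g(a_1,\dots,a_m)=\ell(a_{m-1})a_m+\beta\,\ell(a_m)a_{m-1}$. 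The equivalence of (i)--(iii) is now a two-by-two linear algebra fact, and (iii)$\Rightarrow$(i),(ii) is immediate since $h_1=h_2=0$ clearly forces $f=g=0$.

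For (i)$\Rightarrow$(iii) I would split into cases according to whether $a_{m-1}$ and $a_m$ are linearly independent. If they are, then $\ell(a_{m-1})a_m+\alpha\,\ell(a_m)a_{m-1}=0$ forces $\ell(a_{m-1})=0$ and $\alpha\,\ell(a_m)=0$; since $\alpha\ne0$ this gives both $h_1=h_2=0$ at this point. If $a_{m-1}=0$ or $a_m=0$ the conclusion is trivial because $\ell$ is linear. In the remaining case $a_m=t\,a_{m-1}$ with $t\ne0$ and $a_{m-1}\ne0$, so $f(a_1,\dots,a_m)=t(1+\alpha)\ell(a_{m-1})a_{m-1}$, and since $1+\alpha\ne0$ (this is where $\alpha\ne-1$ is needed) this vanishes only if $\ell(a_{m-1})=0$, again giving $h_1=h_2=0$. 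The implication (ii)$\Rightarrow$(iii) follows by the identical argument with $\beta$ replacing $\alpha$, using $\beta\notin\{0,-1\}$. Thus (i)$\Leftrightarrow$(ii), and $f,g$ have the same zero set.

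For the final assertion I would argue by contradiction: suppose $g=\gamma f+p$ for some $\gamma\in F$ and some polynomial identity $p$ of $M_d(F)$. Then $p=g-\gamma f=(1-\gamma)h_1+(\beta-\gamma\alpha)h_2$ is a polynomial identity, so it suffices to show that no nontrivial $F$-linear combination $\lambda_1 h_1+\lambda_2 h_2$ is a polynomial identity. Because $c$ is not a polynomial identity, I can fix $a_1,\dots,a_{m-2}$ with $\ell\ne0$, then pick $y_0$ with $\ell(y_0)\ne0$ and, since $\dim_F M_d(F)=d^2>1$, a nonzero $E$ with $\ell(E)=0$. Evaluating $\lambda_1 h_1+\lambda_2 h_2$ at $a_{m-1}=y_0,\,a_m=E$ yields $\lambda_1\ell(y_0)E$, forcing $\lambda_1=0$; evaluating at $a_{m-1}=E,\,a_m=y_0$ yields $\lambda_2\ell(y_0)E$, forcing $\lambda_2=0$. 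Hence $1-\gamma=0$ and $\beta-\gamma\alpha=0$, i.e.\ $\gamma=1$ and $\alpha=\beta$, contradicting $\alpha\ne\beta$.

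The main obstacle is exactly this independence-modulo-identities step: one must produce explicit evaluations that decouple $h_1$ from $h_2$. The device is to exploit the nonzero kernel of $\ell$ so that precisely one of $\ell(a_{m-1}),\ell(a_m)$ survives in each test, which is where the hypotheses that $c$ is not an identity (so $\ell\ne0$) and that $M_d(F)$ has dimension greater than one both enter. Equivalently, one may phrase this as saying that the images of $h_1$ and $h_2$ in the relatively free algebra $F\langle x_1,x_2,\dots\rangle/\mathrm{Id}(M_d(F))$ are linearly independent; then $f,g$ map to independent elements because $\alpha\ne\beta$, so $g=\gamma f+p$ would make them dependent, which is the desired contradiction.
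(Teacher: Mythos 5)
Your proof is correct and takes essentially the same route as the paper: centrality of $c$ makes the relevant evaluations scalar multiples of the identity, so $f(a_1,\dots,a_m)=0$ either forces $h_1=h_2=0$ outright or forces $a_m$ to be proportional to $a_{m-1}$, in which case $1+\alpha\neq 0$ (resp.\ $1+\beta\neq 0$) rules out a nonzero value. The only difference is that you spell out the final claim --- that no nontrivial linear combination of $h_1$ and $h_2$ is a polynomial identity, via evaluations with $\ell(y_0)\neq 0$ and $\ell(E)=0$ that decouple the two terms --- which the paper dismisses as ``easy to check''; your verification of it is valid.
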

\begin{proof} % We first prove that (i) implies (iii).
Suppose 
$f(a_1,\dots,a_{m})=0$ but
$h_1(a_1,\dots,a_{m})\ne 0$. The latter implies
$c(a_1,\dots,a_{m-1})\ne 0$ which
together with $f(a_1,\dots,a_{m})=0$ shows that 
$a_{m}=\lambda a_{m-1}$ 
for some $\lambda\in F$.  Hence,
\begin{align*}
&(1+\alpha)h_1(a_1,\dots,a_{m})\\ =&\lambda(1+\alpha) c(a_1,\dots,a_{m-1})a_{m-1}\\
=& c(a_1,\dots,a_{m-2},a_{m-1})a_{m}+\alpha c(a_1,\dots,a_{m-2},a_{m})a_{m-1}\\=&
f(a_1,\dots,a_{m}) =  0.\end{align*}
As $\alpha\ne -1$, this  contradicts our assumption.  We have thereby shown that (i) implies (iii). Since (iii) trivially implies (i), these two conditions are equivalent. Similarly we see that (ii) and (iii) are equivalent.

It is easy to check that
$g$ is not the sum of a scalar multiple of $f$ and a polynomial identity.
\end{proof}

The second proposition provides an evidence for the nontriviality of the results of Section \ref{s3}.

\begin{proposition} \label{matricesnotfzpd}
$M_d(F)$ is not $f$-zpd.
\end{proposition}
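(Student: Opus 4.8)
The plan is to exploit the fact, established in Proposition \ref{propertyofg}, that $f$ and $g$ have the same zero sets. For any linear functional $\tau_0\colon M_d(F)\to F$, the $m$-linear functional $\varphi(a_1,\dots,a_m)=\tau_0\big(g(a_1,\dots,a_m)\big)$ automatically preserves the zeros of $f$, since $f(a)=0$ forces $g(a)=0$ and hence $\varphi(a)=0$. So it suffices to produce a single $\tau_0$ for which $\varphi$ is \emph{not} of the form $a\mapsto\tau\big(f(a)\big)$; this exhibits $M_d(F)$ as not $f$-zpd. First I would set up convenient evaluations. Since $c$ takes central, hence scalar, values, I write $c(a)=\tfrac1d\operatorname{tr}(c(a))\,I$, and for a fixed tuple $\mathbf b=(b_1,\dots,b_{m-2})$ I define the linear functional $\ell(y)=\tfrac1d\operatorname{tr}\big(c(b_1,\dots,b_{m-2},y)\big)$. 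Because $c$ is not a polynomial identity, $\ell\neq0$ for some choice of $\mathbf b$, which I fix together with $e\in M_d(F)$ satisfying $\ell(e)=1$. Restricting all evaluations to $a=(b_1,\dots,b_{m-2},x_{m-1},x_m)$ gives the clean formulas $h_1=\ell(x_{m-1})x_m$ and $h_2=\ell(x_m)x_{m-1}$.

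Next I would consider the two subspaces $U=\{\,a\mapsto\tau(f(a)):\tau\in M_d(F)^\ast\,\}$ and $U'=\{\,a\mapsto\tau_0(g(a)):\tau_0\in M_d(F)^\ast\,\}$ of the space of $m$-linear functionals. Using the restricted evaluations one checks that $f$ is surjective onto $M_d(F)$ (taking $x_{m-1}=e$ with $x_m$ ranging over $\ker\ell$ yields all of $\ker\ell$, while $x_{m-1}=x_m=e$ yields $(1+\alpha)e\notin\ker\ell$, using $\alpha\neq-1$), and likewise $g$ is surjective (using $\beta\neq-1$). Hence both parametrizing maps $\tau\mapsto\tau\circ f$ and $\tau_0\mapsto\tau_0\circ g$ are injective, so $\dim U=\dim U'=d^2$. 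The heart of the argument is the claim that the equality $\tau_0(g(a))=\tau(f(a))$ for all $a$ forces $\tau_0$ to be a scalar multiple of $\ell$. Indeed, restricting to the evaluations above turns this equality into
$$\ell(x_{m-1})(\tau_0-\tau)(x_m)+\ell(x_m)(\beta\tau_0-\alpha\tau)(x_{m-1})=0\qquad\text{for all }x_{m-1},x_m,$$
and specializing $x_{m-1}=e$ and then $x_m=e$ yields $\tau_0-\tau=\kappa\ell$ and $\beta\tau_0-\alpha\tau=\kappa'\ell$ for scalars $\kappa,\kappa'$; eliminating $\tau$ and using $\alpha\neq\beta$ gives $\tau_0\in F\ell$.

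With this claim in hand the conclusion is immediate. If $U$ and $U'$ coincided, then every $\tau_0\in M_d(F)^\ast$ would satisfy $\tau_0\circ g\in U$, hence $\tau_0\in F\ell$; but $\dim M_d(F)^\ast=d^2\ge4>1$, a contradiction. Therefore $U\neq U'$, and since they have equal finite dimension, $U'\not\subseteq U$. Choosing $\varphi\in U'\setminus U$, I obtain $\varphi=\tau_0\circ g$ for some $\tau_0$; this $\varphi$ preserves the zeros of $f$, is nonzero (as $0\in U$), and is not of the form $\tau\circ f$, so $M_d(F)$ is not $f$-zpd. I expect the main obstacle to be the key claim: one has to choose the right family of specialized evaluations so that $h_1$ and $h_2$ decouple into the rank-one expressions $\ell(x_{m-1})x_m$ and $\ell(x_m)x_{m-1}$, and then track the roles of $\alpha$ and $\beta$ (and the hypotheses $\alpha\neq\beta$, $\alpha,\beta\neq-1$) carefully enough to force proportionality to $\ell$ while keeping $U$ and $U'$ of full dimension $d^2$.
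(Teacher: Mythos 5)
Your proof is correct, but it takes a genuinely different route from the paper's. The paper exhibits an explicit counterexample: with $\rho(a)=c(u_1,\dots,u_{m-2},a)$ (the scalar value of $c$, i.e.\ exactly your $\ell$) and any $\omega$ linearly independent of $\rho$, it takes $\varphi(a_1,\dots,a_m)=c(a_1,\dots,a_{m-1})\omega(a_m)$ --- essentially $\omega\circ h_1$ --- shows that it preserves zeros of $f$ via the implication (i)$\implies$(iii) of Proposition \ref{propertyofg}, and then derives a contradiction by specializing the hypothetical identity $\varphi=\tau\circ f$ in the last two slots, obtaining $\rho(a)\omega(b)=\rho(a)\tau(b)+\alpha\tau(a)\rho(b)$ and hence the linear dependence of $\rho$ and $\omega$. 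You instead look for a witness of the form $\tau_0\circ g$, using (i)$\iff$(ii) of Proposition \ref{propertyofg} for zero-preservation, and prove non-representability by an existence argument: your key claim (the same two-slot specialization, now with both $\alpha$ and $\beta$ in play) shows that $\tau_0\circ g=\tau\circ f$ forces $\tau_0\in F\ell$, so, since $\dim M_d(F)^\ast=d^2>1$, some $\tau_0$ must escape. This is slightly less economical but conceptually attractive: it ties the failure of the $f$-zpd property directly to the existence of a polynomial $g$ with the same zero set as $f$ that is not a scalar multiple of $f$ modulo identities, so it is a functional version of the last assertion of Proposition \ref{propertyofg} and is dual in spirit to Proposition \ref{pp}, which says that in an $f$-zpd algebra with $f(1,\dots,1)\neq 0$ such a $g$ cannot exist. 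Two small repairs. First, the dimension count for $U$ and $U'$ (and hence the surjectivity of $f$ and $g$ and the injectivity of $\tau\mapsto\tau\circ f$) is superfluous: the key claim already gives $U'\not\subseteq U$ directly, since $U'\subseteq U$ would force every $\tau_0\in M_d(F)^\ast$ into the one-dimensional space $F\ell$. Second, defining $\ell(y)=\tfrac1d\operatorname{tr}\bigl(c(b_1,\dots,b_{m-2},y)\bigr)$ silently assumes that ${\rm char}(F)$ does not divide $d$; define $\ell(y)$ instead as the unique scalar with $c(b_1,\dots,b_{m-2},y)=\ell(y)\cdot 1$ (it exists because $c$ is central-valued, and it is linear in $y$ by the multilinearity of $c$), which is exactly the paper's $\rho$, and the issue disappears.
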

\begin{proof}
Pick
$u_1,\dots, u_{m-1}\in M_d(F)$ such that $c(u_1,\dots,u_{m-1})\ne 0$. Hence, $$\rho(a) = c(u_1,\dots,u_{m-2},a)$$ is a nonzero linear functional on $M_d(F)$ (here we  identified scalars with scalar multiples of the identity). Let $\omega$ be any linear functional on $M_d(F)$  that is linearly independent to
$\rho$. Define $\varphi \colon M_d(F)^{m}\to F$ by
$$\varphi(a_1,\dots,a_{m}) = c(a_1,\dots,a_{m-1})\omega(a_{m}).$$
Observe that the implication (i)$\implies$(iii) from Proposition \ref{propertyofg} shows that for all $a_1,\dots,a_{m}\in M_d(F)$,
$$f(a_1,\dots,a_{m})=0\implies \varphi(a_1,\dots,a_{m})=0. $$
Suppose $M_d(F)$ was $f$-zpd. Then 
there would exist a linear functional
$\tau$ on $M_d(F)$ such that 
$$\varphi(a_1,\dots,a_{m}) = \tau\left(f(a_1,\dots,a_{m})\right)$$
for all $a_1,\dots,a_{m}\in M_d(F)$.
That is,
\begin{align*}
&c(a_1,\dots,a_{m-2},a_{m-1})\omega(a_{m}) \\=& c(a_1,\dots,a_{m-2}, a_{m-1})\tau(a_{m}) + \alpha c(a_1,\dots,a_{m-2},a_{m})\tau(a_{m-1}).
\end{align*}
Taking $u_i$ for $a_i$, $i=1,\dots,m-2,$ and writing $a$ for $a_{m-1}$ and $b$ for $a_{m}$, we thus have 
$$ \rho(a)\omega(b) = \rho(a)\tau(b) + \alpha \tau(a)\rho(b) $$
for all $a,b\in M_d(F)$. Picking any $a\notin \ker\,\rho$ we see that 
$\tau =\omega +  \lambda \rho $
for some $\lambda\in F$. Consequently,
$$ ((1+\alpha)\lambda\rho(a)  +  \omega(a))\rho(b) =0,$$
%for all $a,b\in M_d(F)$, 
which contradicts the linear independence of $\rho$ and $\omega$.    
\end{proof}

\section{$f$-zpd algebras}\label{s3}

We just saw that the algebra $M_d(F)$ is not $f$-zpd for every multilinear polynomial $f$. The goal of this section is  to provide three different approaches yielding positive results, that  is,  results stating that some algebras, including $M_d(F)$,  are $f$-zpd for some  special polynomials $f$.

We start with a few simple observations. Throughout, $f$ will 
 be a multilinear polynomial of degree $m$.

\begin{remark} \label{zpdproportionalpolynomials}
Let $\alpha \in F$ be a nonzero scalar. Then $A$ is $f$-zpd if and only if $A$ is $\alpha f$-zpd.
\end{remark}

\begin{remark} \label{problemforPIs}
If $f$ is a polynomial identity of $A$, then $A$ is $f$-zpd. 
\end{remark}

The next three lemmas are straightforward generalizations of  standard results on zpd-algebras \cite{Bresar2021}.

\begin{lemma} \label{equivalentconditiontau}
Let $\alpha = f(1, \ldots, 1) \in F$ be nonzero. Then $A$ is $f$-zpd if and only if every $m$-linear functional $\varphi \colon A^m\to F$ that preserves  zeros of $f$ satisfies 
\begin{eqnarray*} \label{zancondition}
\varphi(a_1,\ldots, a_m) = \alpha^{-1} \varphi\left(f(a_1,\ldots,a_m),1,\ldots,1\right)
\end{eqnarray*}  for all $a_{1},\dots,a_{m}\in A$.
\end{lemma}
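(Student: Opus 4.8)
The plan is to prove the two implications separately, the crucial ingredient being the elementary substitution identity $f(b,1,\dots,1)=\alpha b$, valid for every $b\in A$. To establish it, I would write $f=\sum_{\sigma\in S_m}\alpha_\sigma x_{\sigma(1)}\cdots x_{\sigma(m)}$ and substitute $b$ for $x_1$ and the identity element $1$ for each of $x_2,\dots,x_m$; since $b$ occurs exactly once in every monomial and all the other factors become $1$, each monomial collapses to $b$, so that $f(b,1,\dots,1)=\bigl(\sum_{\sigma}\alpha_\sigma\bigr)b$. Because $\alpha=f(1,\dots,1)=\sum_{\sigma}\alpha_\sigma$, this yields $f(b,1,\dots,1)=\alpha b$, and the hypothesis $\alpha\neq 0$ is precisely what will allow one to invert this relation.

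For the implication from the stated condition to $f$-zpd, I would take any $m$-linear functional $\varphi$ preserving zeros of $f$ and define $\tau\colon A\to F$ by $\tau(b)=\alpha^{-1}\varphi(b,1,\dots,1)$. This $\tau$ is linear because $\varphi$ is linear in its first argument, and the assumed condition reads exactly $\varphi(a_1,\dots,a_m)=\tau\bigl(f(a_1,\dots,a_m)\bigr)$, which is the definition of $f$-zpd.

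For the converse, suppose $A$ is $f$-zpd and let $\varphi$ preserve zeros of $f$, so that there is a linear functional $\tau$ with $\varphi(a_1,\dots,a_m)=\tau\bigl(f(a_1,\dots,a_m)\bigr)$ for all $a_i$. Writing $b=f(a_1,\dots,a_m)$ and evaluating $\varphi$ with $b$ in the first slot and $1$ in the remaining slots, I would use the substitution identity to compute $\varphi(b,1,\dots,1)=\tau\bigl(f(b,1,\dots,1)\bigr)=\tau(\alpha b)=\alpha\,\tau(b)=\alpha\,\varphi(a_1,\dots,a_m)$. Dividing by $\alpha$ gives the desired equality.

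I do not anticipate any genuine obstacle: the entire content is the substitution identity $f(b,1,\dots,1)=\alpha b$ together with the invertibility of $\alpha$. The only point meriting a line of care is verifying that $b\mapsto\varphi(b,1,\dots,1)$ is in fact a linear functional, which is immediate from the linearity of $\varphi$ in its first argument.
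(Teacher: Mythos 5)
Your proof is correct and follows essentially the same route as the paper's: the forward direction is exactly the paper's computation $\varphi(f(a_1,\ldots,a_m),1,\ldots,1)=\tau\bigl(f(f(a_1,\ldots,a_m),1,\ldots,1)\bigr)=\alpha\,\tau\bigl(f(a_1,\ldots,a_m)\bigr)$, and the reverse direction (which the paper dismisses as obvious) is handled by defining $\tau(b)=\alpha^{-1}\varphi(b,1,\ldots,1)$, just as intended. Your explicit verification of the substitution identity $f(b,1,\ldots,1)=\alpha b$ simply makes precise what the paper uses implicitly.
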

\begin{proof}
If $A$ is 
 $f$-zpd and $\tau$ is the linear functional from the definition, then 
\begin{equation*}
\begin{split}
\varphi(f(a_1,\ldots,a_m), 1, \ldots, 1) &= \tau(f(f(a_1,\ldots,a_m),1,\ldots,1)) \\ 
& = \alpha \tau(f(a_1,\ldots,a_m)).
\end{split}\end{equation*}
The reverse implication is obvious.
\end{proof}

\begin{lemma} \label{equivalentconditiontau2}
An $F$-algebra $A$ is $f$-zpd if and only if
every $m$-linear functional $\varphi \colon A^m\to F$ that preserves  zeros of $f$ satisfies the following condition:  for all $N\ge 1$ and all $a_{1}^{(t)},\dots,a_{m}^{(t)}\in A$, $t=1,\dots,N$,
\begin{eqnarray} \label{equivalentlinear}
\sum_{t=1}^Nf\bigl(a_{1}^{(t)},\dots, a_{m}^{(t)}\bigr)=0 \implies \sum_{t=1}^N\varphi\bigl(a_{1}^{(t)},\dots, a_{m}^{(t)}\bigr)=0.
\end{eqnarray}
\end{lemma}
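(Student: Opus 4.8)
The plan is to establish the two implications separately. The forward direction (if $A$ is $f$-zpd, then every zero-preserving $\varphi$ satisfies (\ref{equivalentlinear})) is immediate from the definition, while the reverse direction is where the work lies and amounts to constructing the functional $\tau$ of Definition \ref{generalproblem} directly from the summation hypothesis.

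For the forward direction I would take an $m$-linear functional $\varphi$ that preserves zeros of $f$ and invoke the assumption that $A$ is $f$-zpd to obtain a linear functional $\tau$ with $\varphi(a_1,\dots,a_m)=\tau(f(a_1,\dots,a_m))$. Then, given data $a_i^{(t)}$ with $\sum_{t=1}^N f(a_1^{(t)},\dots,a_m^{(t)})=0$, the linearity of $\tau$ yields $\sum_{t=1}^N \varphi(a_1^{(t)},\dots,a_m^{(t)})=\tau\bigl(\sum_{t=1}^N f(a_1^{(t)},\dots,a_m^{(t)})\bigr)=\tau(0)=0$, which is exactly (\ref{equivalentlinear}). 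This costs nothing beyond the definition.

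For the converse, suppose every zero-preserving $\varphi$ satisfies (\ref{equivalentlinear}) and fix such a $\varphi$; I must produce $\tau$. Let $V=\mathrm{span}\{f(a_1,\dots,a_m):a_i\in A\}\subseteq A$ be the linear span of all evaluations of $f$, and define $\tau_0$ on $V$ by prescribing $\tau_0(f(a_1,\dots,a_m))=\varphi(a_1,\dots,a_m)$ on the spanning vectors and extending linearly. A linear map defined on a spanning set is well defined precisely when every linear relation among the spanning vectors is respected by the prescribed values, so I must check that $\sum_t \lambda_t f(a_1^{(t)},\dots,a_m^{(t)})=0$ forces $\sum_t \lambda_t \varphi(a_1^{(t)},\dots,a_m^{(t)})=0$. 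Using the multilinearity of $f$ and of $\varphi$ to absorb each scalar $\lambda_t$ into the first argument, the relation becomes $\sum_t f(a_1^{(t)},\dots,a_m^{(t)})=0$, and (\ref{equivalentlinear}) is exactly the assertion that the matching sum of $\varphi$-values then vanishes. Hence $\tau_0$ is a well-defined linear functional on $V$.

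Finally I would extend $\tau_0$ to a linear functional $\tau$ on all of $A$, say by choosing a complement $W$ with $A=V\oplus W$ and letting $\tau$ vanish on $W$. By construction $\varphi(a_1,\dots,a_m)=\tau(f(a_1,\dots,a_m))$ for all arguments, so $A$ is $f$-zpd. The one step requiring genuine care is the well-definedness of $\tau_0$, but this is handed to us directly by (\ref{equivalentlinear}); indeed, that condition was evidently formulated precisely to serve as the well-definedness criterion, and everything surrounding it is formal linear algebra.
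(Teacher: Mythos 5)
Your proposal is correct and follows essentially the same route as the paper: the paper likewise defines $\tau_0$ on the linear span $A_0$ of all evaluations of $f$ by $\tau_0\bigl(\sum_t f(a_1^{(t)},\dots,a_m^{(t)})\bigr)=\sum_t\varphi(a_1^{(t)},\dots,a_m^{(t)})$, notes that (\ref{equivalentlinear}) is exactly the well-definedness condition, and extends linearly to $A$. Your only addition is making explicit that arbitrary scalar coefficients in a linear relation can be absorbed into an argument by multilinearity, a detail the paper leaves implicit.
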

\begin{proof}
The "only if" part is clear. To prove the "if" part, denote by $A_0$ the linear span of all
$f(a_1,\dots,a_m)$, $a_i\in A$, 
and observe that (\ref{equivalentlinear})
implies that $\tau_{0} \colon A_0 \to F$,
$$
\tau_{0} \left( \sum_{t=1}^N f \left( a_1^{(t)}, \ldots, a_m^{(t)} \right) \right) = \sum_{t=1}^N \varphi \left(a_1^{(t)}, \ldots, a_m^{(t)} \right)
$$
is a well defined linear functional on $A_0$. Letting 
 $\tau \colon A^{m} \to F$ to be  any linear extension of $\tau_{0}$, we thus have  $\varphi(a_1,\dots,a_m)=\tau\left(f(a_1,\dots,a_m)\right)$ for all $a_1,\dots,a_m\in A$.
\end{proof}

\begin{lemma} \label{l35}
Let $A$ be an $f$-zpd algebra and let 
 $X$ be a vector space over $F$. If an
 $m$-linear map $\Phi \colon A^m\to X$ preserves zeros of $f$, i.e., 
 for all $a_1,\dots,a_m\in A$, 
 $$ f(a_1,\dots,a_m)=0\implies  \Phi(a_1,\dots,a_m)=0,$$ then there exists a linear map $T \colon A\to X$ such that 
 $$\Phi(a_1,\dots,a_m)=T\left(f(a_1,\dots,a_m)\right) $$
 for all $a_1,\dots,a_m\in A$. 
\end{lemma}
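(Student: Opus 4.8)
The plan is to reduce this vector-valued statement to the scalar-valued case by composing $\Phi$ with linear functionals on $X$, and then to build $T$ by the same well-definedness argument used in the proof of Lemma~\ref{equivalentconditiontau2}. First I would set $A_0$ to be the linear span of all $f(a_1,\dots,a_m)$ with $a_i\in A$, and attempt to define $T_0\colon A_0\to X$ by
$$
T_0\Bigl(\sum_{t=1}^N f\bigl(a_1^{(t)},\dots,a_m^{(t)}\bigr)\Bigr)=\sum_{t=1}^N \Phi\bigl(a_1^{(t)},\dots,a_m^{(t)}\bigr).
$$
Exactly as in Lemma~\ref{equivalentconditiontau2}, this is well defined the moment one knows the vector-valued analogue of (\ref{equivalentlinear}): whenever $\sum_{t=1}^N f\bigl(a_1^{(t)},\dots,a_m^{(t)}\bigr)=0$, one also has $\sum_{t=1}^N \Phi\bigl(a_1^{(t)},\dots,a_m^{(t)}\bigr)=0$.

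To establish this analogue, I would exploit that the dual space $X^*$ separates the points of $X$, i.e.\ a vector $v\in X$ equals $0$ precisely when $\lambda(v)=0$ for every linear functional $\lambda\in X^*$ (choose a Hamel basis of $X$). So fix an arbitrary $\lambda\in X^*$ and put $\varphi_\lambda=\lambda\circ\Phi\colon A^m\to F$. Then $\varphi_\lambda$ is $m$-linear, and since $\Phi$ preserves zeros of $f$, so does $\varphi_\lambda$. As $A$ is $f$-zpd, Lemma~\ref{equivalentconditiontau2} applies to $\varphi_\lambda$ and yields that $\varphi_\lambda$ satisfies (\ref{equivalentlinear}). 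Hence $\sum_t f\bigl(a_1^{(t)},\dots,a_m^{(t)}\bigr)=0$ forces $\sum_t \varphi_\lambda\bigl(a_1^{(t)},\dots,a_m^{(t)}\bigr)=\lambda\bigl(\sum_t \Phi(a_1^{(t)},\dots,a_m^{(t)})\bigr)=0$. Since $\lambda$ was arbitrary and $X^*$ separates points, $\sum_t \Phi\bigl(a_1^{(t)},\dots,a_m^{(t)}\bigr)=0$, as desired.

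With the vector-valued version of (\ref{equivalentlinear}) in hand, $T_0$ is a well-defined linear map on $A_0$, and I would finish by letting $T\colon A\to X$ be any linear extension of $T_0$ to all of $A$; then $\Phi(a_1,\dots,a_m)=T\left(f(a_1,\dots,a_m)\right)$ holds by construction. I do not anticipate a genuine obstacle here. The only point requiring care is the separation of points of $X$ by $X^*$, which is automatic for a vector space over a field, and the bookkeeping needed to invoke Lemma~\ref{equivalentconditiontau2} for each $\varphi_\lambda$ rather than reproving well-definedness from scratch; beyond that, the argument is a routine vectorization of the scalar case.
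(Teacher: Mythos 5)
Your proposal is correct and follows essentially the same route as the paper's own proof: compose $\Phi$ with arbitrary linear functionals on $X$, invoke Lemma~\ref{equivalentconditiontau2} for each scalar functional $\lambda\circ\Phi$ to get the vector-valued analogue of (\ref{equivalentlinear}) via point separation, then define $T_0$ on the span $A_0$ and extend linearly. The only difference is cosmetic: you spell out the separation-of-points fact that the paper leaves implicit.
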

\begin{proof}
If $X=F$ then this is true by the definition of an $f$-zpd algebra. The general case can be easily reduced to this one. Indeed, take a linear functional $\omega$ on $X$ and observe that the composition $\omega\circ \Phi$ is an $m$-linear functional preserving zeros of $f$. We may therefore use Lemma \ref{equivalentconditiontau2} to conclude that
 for all $a_{1}^{(t)},\dots,a_{m}^{(t)}\in A$,
$$\sum_{t=1}^Nf\bigl(a_{1}^{(t)},\dots, a_{m}^{(t)}\bigr)=0 \implies $$ 
$$\omega\left(\sum_{t=1}^N\Phi\bigl(a_{1}^{(t)},\dots, a_{m}^{(t)}\bigr)\right)=  \sum_{t=1}^N(\omega\circ \Phi)\bigl(a_{1}^{(t)},\dots, a_{m}^{(t)}\bigr)=0.$$ 
%\begin{align*} 
%\sum_{t=1}^Nf(a_{1}^{(t)},\dots, a_{m}^{(t)})=0 \implies &  \omega\left(\sum_{t=1}^N\Phi(a_{1}^{(t)},\dots, a_{m}^{(t)})\right)\\&=  \sum_{t=1}^N(\omega\circ \Phi)(a_{1}^{(t)},\dots, a_{m}^{(t)})=0.
%\end{align*}
Since $\omega$ is an arbitrary linear functional on $X$, it follows that $\Phi$ satisfies 
\begin{eqnarray*} 
\sum_{t=1}^N f\bigl(a_{1}^{(t)},\dots, a_{m}^{(t)}\bigr)=0 \implies \sum_{t=1}^N\Phi\bigl(a_{1}^{(t)},\dots, a_{m}^{(t)}\bigr)=0.
\end{eqnarray*}
We can now repeat the argument from the proof of Lemma  \ref{equivalentconditiontau2}, that is, we define  the linear map
$T_0 \colon A_0\to X$ by
$$
T_{0} \left( \sum_{t=1}^N f \left( a_1^{(t)}, \ldots, a_m^{(t)} \right) \right) = \sum_{t=1}^N \Phi \bigl(a_1^{(t)}, \ldots, a_m^{(t)}\bigr)
$$
and extend it to a linear map $T \colon A\to X$.
\end{proof}

This last lemma should be  crucial for possible applications of 
the concept of an $f$-zpd algebra. However, we will not discuss them in this paper.

The remaining of the section is divided into three subsections.

\subsection{Compositions of polynomials}

To state our first theorem  in this section, we need the following generalization of Definition \ref{generalproblem}.

\begin{definition}
Let $f = f(x_1,\ldots,x_m)\in F\langle x_1,x_2,\dots \rangle$ be a multilinear polynomial, let $A$ be an $F$-algebra, and let   $B_1,\dots, B_m$ be vector subspaces of $A$. We say that {\em  the set $ B_1 \times \cdots \times B_m$ is  $f$-zpd} if for every $m$-linear functional $\varphi \colon B_1 \times \cdots \times B_m \to F$ with the property that for all
$a_i\in B_i$, $i=1,\dots,m$,  $$f(a_1,\ldots, a_m) = 0 \implies \varphi(a_1,\ldots, a_m)=0,$$  there exists a linear functional $\tau$ on $A$ such that 
$$
\varphi(a_1,\ldots, a_m) = \tau\left(f(a_1,\ldots,a_m)\right)
$$
for all $a_i\in B_i$, $i=1,\dots,m$.
\end{definition}

Of course, $A^m=A\times \cdots\times A$ is $f$-zpd is the same thing as $A$ is $f$-zpd.

Before stating the next theorem, we recall the 
 {\em L'vov-Kaplansky conjecture} which states that the image of any multilinear polynomial $f$ in $M_d(F)$ is a vector space (provided that $F$ is infinite). More specifically, if $f$ is not a polynomial identity or a central polynomial, then $$f(M_d(F))=\{f(a_1,\dots,a_m)\,|\,a_i\in M_d(F)\}$$ is either $M_d(F)$ or
 ${\rm sl}_d(F)$, i.e., the Lie subalgebra of $M_d(F)$ consisting of matrices  with zero trace (note that $f(M_d(F))$ is used as an abbreviation for 
 $f(M_d(F)\times \cdots\times M_d(F) )$). We refer the reader to the survey paper \cite{R} on images of polynomials, which is primarily devoted to this conjecture. Although only some partial solutions are known at present, the fact that this conjecture exists  
indicates that the 
 assumption (a) from the following theorem  is not artificial.
 The theorem actually concerns an arbitrary algebra $A$, but of course we are primarily interested in the case where $A=M_d(F)$.

\begin{theorem} \label{invariantunderendomorphisms}
Let $A$ be an $F$-algebra and let $k\ge 1$. For each $i=1,\dots, k$, let $f_{i} \left( x_{i_1},\dots,x_{i_{m_{i}}} \right)\in F\langle x_1,x_2,\dots \rangle$ be a multilinear polynomial in $m_i$ variables 
and let $B^{(i)}_{1}, \ldots, B^{(i)}_{{m_i}}$ be vector subspaces of $A$. Set $$C_i=B^{(i)}_{1} \times \cdots \times B^{(i)}_{{m_i}}.$$
Further, let $f_{0}\in F\langle x_1,x_2,\dots \rangle$ be a multilinear polynomial in $k$ variables, and let
$$
f=f_{0} \left( f_{1},\ldots,f_{k} \right).
$$
Suppose that the following three
conditions are satisfied:
\begin{enumerate}
    \item[(a)] For each $i=1,\dots,k$, $f_{i}(C_i)$
 is a vector subspace of $A$. 
\item[(b)] For each $i=1,\dots,k$, the set $C_i$ is $f_i$-zpd. 
\item[(c)] 
The set
$
f_1(C_1) \times \cdots \times f_k(C_k)
$ is
$f_{0}$-zpd.
\end{enumerate}
Then  the set $C_1\times\cdots\times C_k$ is $f$-zpd.
\end{theorem}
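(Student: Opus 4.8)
The plan is to split the argument into two stages: first to \emph{peel off} the inner polynomials $f_1,\dots,f_k$ block by block, using hypotheses (a) and (b), so that the given functional is shown to factor through the tuple $(f_1,\dots,f_k)$; and then to feed the resulting functional into hypothesis (c). Set $m=m_1+\cdots+m_k$. Since $f_0$ is multilinear and the variable blocks of the $f_i$ are pairwise disjoint, $f=f_0(f_1,\dots,f_k)$ is a multilinear polynomial in $m$ variables, and $C_1\times\cdots\times C_k$ is, after regrouping the variables into blocks, exactly a product of the $m$ subspaces $B^{(i)}_{j}$. I write $\mathbf{a}_i\in C_i$ for a tuple filling the $i$-th block and $f_i(\mathbf{a}_i)$ for the corresponding value, and I fix once and for all an $m$-linear functional $\varphi$ on $C_1\times\cdots\times C_k$ that preserves zeros of $f$. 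Before starting I would record the \emph{vector-valued set version} of Lemma \ref{l35}: if a product $C=B_1\times\cdots\times B_r$ of subspaces is $g$-zpd, $X$ is a vector space, and an $r$-linear map $\Phi\colon C\to X$ preserves zeros of $g$, then $\Phi=T\circ g$ on $C$ for some linear $T\colon A\to X$. This follows verbatim from the proofs of Lemmas \ref{equivalentconditiontau2} and \ref{l35}, since the only spans used there live inside $A$ and are unaffected by replacing $A^{r}$ with $C$.

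For Stage A I would process $i=1,\dots,k$ in turn, maintaining a multilinear functional $\varphi_i$ on $f_1(C_1)\times\cdots\times f_i(C_i)\times C_{i+1}\times\cdots\times C_k$ that agrees with $\varphi$ once the first $i$ slots are filled with single values $b_j=f_j(\mathbf{a}_j)$, beginning with $\varphi_0=\varphi$. To pass from $\varphi_{i-1}$ to $\varphi_i$ I would freeze the remaining slots and view $\varphi_{i-1}$ as an $m_i$-linear map $\Phi_i$ from $C_i$ into the space $W_i$ of multilinear functionals on those slots. The crucial observation is that $\Phi_i$ preserves zeros of $f_i$: if $f_i(\mathbf{a}_i)=0$ then, since $f_0$ is multilinear and a product with one factor zero vanishes, $f=0$, whence $\varphi=0$ whenever the already-peeled slots carry single values; as $f_j(C_j)$ is spanned by such values and $\varphi_{i-1}$ is linear in those slots, $\Phi_i(\mathbf{a}_i)=0$. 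Now (b), through the set version of Lemma \ref{l35}, gives a linear $T_i\colon A\to W_i$ with $\Phi_i=T_i\circ f_i$, and I set $\varphi_i(\dots,b_i,\dots)=T_i(b_i)(\dots)$; here (a) is what makes $f_i(C_i)$ a legitimate subspace slot. After $k$ steps I arrive at a $k$-linear functional $\psi:=\varphi_k$ on $f_1(C_1)\times\cdots\times f_k(C_k)$ satisfying $\varphi(\mathbf{a}_1,\dots,\mathbf{a}_k)=\psi(f_1(\mathbf{a}_1),\dots,f_k(\mathbf{a}_k))$.

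Stage B then applies (c). I would check that $\psi$ preserves zeros of $f_0$: given $b_i\in f_i(C_i)$ with $f_0(b_1,\dots,b_k)=0$, hypothesis (a) forces each $b_i$ to be a \emph{single} value $b_i=f_i(\mathbf{a}_i)$ (the subspace $f_i(C_i)$ coincides with the set of values of $f_i$), so $f(\mathbf{a}_1,\dots,\mathbf{a}_k)=f_0(b_1,\dots,b_k)=0$, forcing $\varphi=0$ and hence $\psi(b_1,\dots,b_k)=0$. Condition (c) then supplies a linear functional $\tau$ on $A$ with $\psi=\tau\circ f_0$ on $f_1(C_1)\times\cdots\times f_k(C_k)$, and composing gives $\varphi(\mathbf{a}_1,\dots,\mathbf{a}_k)=\tau(f(\mathbf{a}_1,\dots,\mathbf{a}_k))$, which is the assertion.

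The step I expect to be delicate is precisely this zero transfer in Stage B: an identity $f_0(b_1,\dots,b_k)=0$ among arbitrary image elements carries no information about $\varphi$ unless every $b_i$ is realized as one value $f_i(\mathbf{a}_i)$, rather than merely as a linear combination of such values. This is exactly what hypothesis (a) guarantees, and it is the reason (a) is needed on top of (b) and (c); the same hypothesis also underlies Stage A, both in making each $f_i(C_i)$ a valid subspace domain and in propagating the zero-preserving property through the peeling via multilinearity in the already-peeled slots. The remaining bookkeeping — that the $\varphi_i$ are genuinely multilinear and that the single-value agreement is preserved at each step — is routine and I would only sketch it.
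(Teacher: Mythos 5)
Your proposal is correct, and its skeleton is the same as the paper's: peel off the inner polynomials $f_1,\dots,f_k$ one block at a time, arriving at a multilinear functional on $f_1(C_1)\times\cdots\times f_k(C_k)$ through which $\varphi$ factors, then apply hypothesis (c). The difference is in how each peeling step is executed. The paper works entirely with scalar functionals: it freezes the arguments outside the current block, applies the (scalar) $f_i$-zpd hypothesis to get a family of functionals $\tau$ indexed by the frozen arguments, assembles these into a map $\Phi_i$ on $f_1(C_1)\times\cdots\times f_i(C_i)\times C_{i+1}\times\cdots\times C_k$, and then verifies multilinearity of $\Phi_i$ by an explicit computation — this is one of the places where hypothesis (a) enters, since the verification requires writing an arbitrary element of $f_1(C_1)$ as a single value $f_1\bigl(a^{(1)}_1,\dots,a^{(1)}_{m_1}\bigr)$. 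You instead invoke a vector-valued, set version of Lemma \ref{l35} with target $W_i$, the space of multilinear functionals on the remaining slots, which makes multilinearity of the updated functional automatic: linearity in the $i$-th slot comes from linearity of $T_i$, and multilinearity in the other slots comes from $T_i$ taking values in $W_i$. Your claim that this version of Lemma \ref{l35} follows verbatim from the proofs of Lemmas \ref{equivalentconditiontau2} and \ref{l35} is accurate, since the spans constructed there live inside $A$ and nothing in those arguments uses that the domain is all of $A^m$ rather than a product of subspaces. So your route trades the paper's explicit additivity computation for one auxiliary lemma — a cleaner bookkeeping device for the same underlying argument. You also identify correctly, and exactly as the paper does, the two essential uses of hypothesis (a): making $f_i(C_i)$ a legitimate subspace slot during the peeling, and guaranteeing in your Stage B that every zero of $f_0$ on $f_1(C_1)\times\cdots\times f_k(C_k)$ arises from single values $f_i(\mathbf{a}_i)$, so that the zero-preservation of $\varphi$ can be transferred to the peeled functional.
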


\begin{proof}
For each $i=1,\dots,k$,
we set   $$D_{i}= C_i \times C_{i+1} \times \cdots \times C_k.$$ Our goal is to prove that $D_1$ is $f$-zpd. 
Thus, take 
a multilinear functional $\varphi \colon D_1 \to F$  that preserves zeros of $f$, that is, for all $ a^{(i)}_j \in B^{(i)}_j$,
$$
f \left( a^{(1)}_{1}, \ldots, a^{(k)}_{m_{k}} \right)=0 
\implies
\varphi \left(a^{(1)}_{1}, \ldots, a^{(k)}_{m_{k}} \right) = 0.
$$
Define the multilinear functional $\varphi_{a_1^{(2)}, \ldots, a^{(k)}_{m_{k}}} \colon C_1 \to F$  by 
$$
\varphi_{a^{(2)}_{1}, \ldots, a^{(k)}_{m_{k}}}
\left( a^{(1)}_{1}, \ldots, a^{(1)}_{m_{1}} \right) =
\varphi \left( a^{(1)}_{1}, \ldots, a^{(1)}_{m_{1}}, a^{(2)}_{1}, \ldots, 
a^{(k)}_{m_{k}} \right).
$$
Clearly $\varphi_{a^{(2)}_{1}, \ldots, a^{(k)}_{m_{k}}}$ preserves zeros of $f_{1}$ on $C_1$. Since $C_1$ is $f_1$-zpd, there is a linear functional 
$\tau_{a^{(2)}_{1}, \ldots, a^{(k)}_{m_{k}}} \colon A\to F$ such that
$$
\varphi_{a^{(2)}_{1}, \ldots, a^{(k)}_{m_{k}}} \left( a^{(1)}_{1}, \ldots, a^{(1)}_{m_{1}} \right) = \tau_{a^{(2)}_{1}, \ldots, a^{(k)}_{m_{k}}} \left( f_{1} \left(a^{(1)}_{1}, \ldots, a^{(1)}_{m_{1}} \right) \right).
$$
Now define  $\Phi_{1} \colon f_1(C_1) \times D_2 \to F$    by 
\[
\Phi_{1} \left(a_{1}, a^{(2)}_{1}, \ldots, a^{(2)}_{m_{2}},\ldots, a^{(k)}_{1}, \ldots, a^{(k)}_{m_{k}} \right) = \tau_{a^{(2)}_{1}, \ldots, a^{(k)}_{m_{k}}}(a_{1}).
\]
The linearity of $\Phi_{1}$ in the first argument is obvious. Let us prove the linearity of $\Phi_{1}$ in the second argument. It is enough to show that 
\begin{equation}
\label{ee}
\tau_{a^{(2)}_{1} + b^{(2)}_{1}, a^{(2)}_{2},\ldots, a^{(m)}_{m_{k}}} = 
\tau_{a^{(2)}_{1}, a^{(2)}_{2},\ldots, a^{(m)}_{m_{k}}} +
\tau_{b^{(2)}_{1}, a^{(2)}_{2},\ldots, a^{(m)}_{m_{k}}}
\end{equation}
and
\begin{equation}
    \label{el}
\tau_{\lambda a^{(2)}_{1} , a^{(2)}_{2},\ldots, a^{(m)}_{m_{k}}} = 
\lambda \tau_{a^{(2)}_{1}, a^{(2)}_{2},\ldots, a^{(m)}_{m_{k}}}
\end{equation}
where $\lambda\in F$.
Take $a_{1} \in f(C_1)$ and write $a_{1}= f_{1} \left(
a^{(1)}_{1}, \ldots, a^{(1)}_{m_{1}} \right).$ Then,
\begin{eqnarray*}
\begin{aligned} 
\tau_{a^{(2)}_{1} + b^{(2)}_{1}, a^{(2)}_{2},\ldots, a^{(m)}_{m_{k}}} \left(a_{1} \right) &= 
\tau_{a^{(2)}_{1} + b^{(2)}_{1}, a^{(2)}_{2},\ldots, a^{(m)}_{m_{k}}} \left(f_{1} 
\left(a^{(1)}_{1}, \ldots, a^{(1)}_{m_{1}} \right) \right) \\
& =\varphi_{a^{(2)}_{1} + b^{(2)}_{1}, a^{(2)}_{2},\ldots, a^{(m)}_{m_{k}}}
\left( a^{(1)}_{1},\ldots, a^{(1)}_{m_{1}} \right)\\
&=\varphi \left( a^{(1)}_{1}, \ldots, a^{(1)}_{m_{1}}, a^{(2)}_{1}+ b^{(2)}_{1}, a^{(2)}_{2}, \ldots, a^{(k)}_{m_{k}} \right)\\
&=\varphi \left( a^{(1)}_{1},\ldots, a^{(1)}_{m_{1}}, a^{(2)}_{1}, a^{(2)}_{2},\ldots, a^{(k)}_{m_{k}} \right) \\ 
 & \ \ \ + \varphi \left( a^{(1)}_{1},\ldots, a^{(1)}_{m_{1}}, b^{(2)}_{1}, a^{(2)}_{2}, \ldots, a^{(k)}_{m_{k}} \right) \\
&=\varphi_{a^{(2)}_{1}, a^{(2)}_{2}, \ldots, a^{(k)}_{m_{k}}} 
\left( a^{(1)}_{1},\ldots, a^{(1)}_{m_{1}} \right) \\ 
 & \ \ \ +
\varphi_{b^{(2)}_{1}, a^{(2)}_{2}, \ldots, a^{(k)}_{m_{k}}} 
\left( a^{(1)}_{1},\ldots, a^{(1)}_{m_{1}} \right)\\
&= \tau_{a^{(2)}_{1}, a^{(2)}_{2}, \ldots, a^{(k)}_{m_{k}}}
\left( f_{1} \left( a^{(1)}_{1},\ldots, a^{(1)}_{m_{1}} \right) \right) \\ 
 & \ \ \ +
\tau_{b^{(2)}_{1}, a^{(2)}_{2}, \ldots, a^{(k)}_{m_{k}}}
\left( f_{1} \left( a^{(1)}_{1},\ldots, a^{(1)}_{m_{1}} \right) \right) \\
&=\left(
\tau_{a^{(2)}_{1}, a^{(2)}_{2}, \ldots, a^{(k)}_{m_{k}}} +
\tau_{b^{(2)}_{1}, a^{(2)}_{2}, \ldots, a^{(k)}_{m_{k}}} \right)(a_{1}),
\end{aligned}
\end{eqnarray*}
which proves (\ref{ee}). The proof of (\ref{el}) is similar. 
Analogously we see that $\Phi_1$ is linear in other arguments, so 
 $\Phi_{1}$ is a multilinear functional. Moreover, 
\begin{eqnarray*}
\begin{aligned} 
& \varphi \left( a^{(1)}_{1}, \ldots, a^{(1)}_{m_{1}}, a^{(2)}_{1}, \ldots, a^{(2)}_{m_{2}},\ldots, a^{(k)}_{1}, \ldots, a^{(k)}_{m_{k}} \right) \\
& \ \ \ \ \ \ \ \ \  =  \Phi_{1} \left(f_{1} \left(a^{(1)}_{1}, \ldots, a^{(1)}_{m_{1}} \right), a^{(2)}_{1}, \ldots, a^{(2)}_{m_{2}},\ldots, a^{(k)}_{1}, \ldots, a^{(k)}_{m_{k}} \right)
\end{aligned}
\end{eqnarray*}
for all $a^{(i)}_{j} \in B^{(i)}_{j}$.
From this we see that the multilinear functional $$\varphi_{a_{1}, a^{(3)}_{1}, \ldots, a^{(k)}_{m_{k}}} \colon C_2 \to F,$$ given   by
\[
\varphi_{a_{1}, a^{(3)}_{1}, \ldots, a^{(k)}_{m_{k}}}
\left( a^{(2)}_{1}, \ldots, a^{(2)}_{m_{2}} \right) = \Phi_{1}
\left( 
a_{1}, a^{(2)}_{1}, \ldots, a^{(2)}_{m_{2}}, a^{(3)}_{1}, \ldots, a^{(k)}_{m_{k}} \right),
\]
 preserves zeros of $f_{2}$ on $C_2$. As $C_2$ is $f_2$-zpd, there exists a linear functional $$\tau_{a_{1}, a^{(3)}_{1}, \ldots, a^{(k)}_{m_{k}}} \colon A\to F$$ such that 
$$
\varphi_{a_{1}, a^{(3)}_{1}, \ldots, a^{(k)}_{m_{k}}}
\left( a^{(2)}_{1}, \ldots, a^{(2)}_{m_{2}} \right) =
\tau_{a_{1}, a^{(3)}_{1}, \ldots, a^{(k)}_{m_{k}}}
\left(f_{2} \left( a^{(2)}_{1}, \ldots, a^{(2)}_{m_{2}} \right) \right).
$$
Next we  define $$\Phi_{2} \colon f_1(C_1) \times f_2(C_2) \times D_3 \to F$$
by
\[
\Phi_{2} \left( a_{1}, a_{2}, a^{(3)}_{1}, \ldots, a^{(3)}_{m_{3}}, \ldots, a^{(k)}_{1}, \ldots, a^{(k)}_{m_{k}} \right)=
\tau_{a_{1}, a^{(3)}_{1},\ldots, a^{(k)}_{m_{k}}} \left(a_{2} \right).
\]
We  show that $\Phi_{2}$ is multilinear in a similar fashion as we  showed that $\Phi_1$ is multilinear.
 Moreover, we have
\begin{eqnarray*}
    \begin{aligned}    
&\varphi \left( 
a^{(1)}_{1},\ldots, a^{(k)}_{m_{k}}\right)
\\ 
=& \Phi_{1} \left( f_{1} \left( a^{(1)}_{1},\ldots, a^{(1)}_{m_{1}} \right), a^{(2)}_{1}, \ldots, a^{(2)}_{m_{2}}, \ldots, a^{(k)}_{1}, \ldots, a^{(k)}_{m_{k}} \right) \\
=&\varphi_{f_{1} ( a^{(1)}_{1},\ldots, a^{(1)}_{m_{1}}), a^{(3)}_{1},\ldots, a^{(k)}_{m_{k}}} \left( a^{(2)}_{1}, \ldots, a^{(2)}_{m_{2}} \right) \\
=&\tau_{f_{1} ( a^{(1)}_{1},\ldots, a^{(1)}_{m_{1}}), a^{(3)}_{1},\ldots, a^{(k)}_{m_{k}}} \left( f_2 \left( a^{(2)}_{1}, \ldots, a^{(2)}_{m_{2}} \right) \right) \\
=&\Phi_{2} \left(
f_1 \left( a^{(1)}_{1}, \ldots, a^{(1)}_{m_{1}} \right), 
f_2 \left( a^{(2)}_{1}, \ldots, a^{(2)}_{m_{2}} \right), 
a^{(3)}_{1}, \ldots, a^{(k)}_{m_{k}}
\right).
\end{aligned}
\end{eqnarray*}
    
Repeating this process, we obtain the existence of a multilinear functional 
$$
\Phi \colon f_1(C_1) \times f_2(C_2) \times\cdots\times f_k(C_k) \to F
$$ 
satisfying 
\[
\varphi \left(
a^{(1)}_{1},\ldots,  a^{(k)}_{m_{k}} \right)
=\Phi \left(
f_{1} \left( a^{(1)}_{1},\ldots, a^{(1)}_{m_{1}} \right),
\ldots,
f_{k} \left( a^{(k)}_{1},\ldots, a^{(k)}_{m_{k}} \right) \right)
\]
 for all $a^{(i)}_{j} \in B^{(i)}_{j}$.
Suppose $f_{0}( a_{1},\ldots, a_{k})=0$ for some $a_i \in f(C_i)$. Writing $a_{i}=f_{i} \left( a^{(i)}_{1},\ldots, a^{(i)}_{m_{i}} \right)$, we then have
$$
f \left(
a^{(1)}_{1},\ldots, a^{(1)}_{m_{1}}, \ldots, a^{(k)}_{1},\ldots, a^{(k)}_{m_{k}} \right)=0. 
$$
Hence $$\varphi \left(
a^{(1)}_{1},\ldots, a^{(1)}_{m_{1}}, \ldots, a^{(k)}_{1},\ldots, a^{(k)}_{m_{k}} \right)=0,$$ that is, 
\[
\Phi \left( a_{1},\ldots, a_{k} \right)=
\Phi \left(
f_{1} \left( a^{(1)}_{1},\ldots, a^{(1)}_{m_{1}} \right),
\ldots,
f_{k} \left( a^{(k)}_{1},\ldots, a^{(k)}_{m_{k}} \right) \right)=0.
\]
Therefore, $\Phi$ preserves zeros of $f_{0}$. Since $f_1(C_1) \times \cdots \times f_k(C_k)$ is $f_{0}$-zpd,  there exists a linear functional $\tau \colon A \to F$ such that, for all $a_{i} \in f_i \left(C_i \right)$,
\[
\Phi \left( a_{1},\ldots, a_{k} \right) =\tau \left( f_{0} \left( a_{1}, \ldots, a_{k}  \right) \right).
\]
The proof is complete since, for all $a^{(i)}_{j} \in A$,
\[
\varphi \left( 
a^{(1)}_{1},\ldots, a^{(k)}_{m_{k}}
\right)
=
\tau \left(
f \left( 
a^{(1)}_{1},\ldots, a^{(k)}_{m_{k}}
 \right) 
\right).
\qedhere\]
\end{proof}

 The following corollary is immediate.

\begin{corollary} \label{fzpdcorollary}
 Let  $f_1, \ldots, f_{k} $
be multilinear polynomials in distinct variables and let $f_0$ be a multilinear polynomial in $k$ variables. Let $A$ be an $F$-algebra satisfying the following two conditions:
\begin{enumerate}
    \item[(a)] $A$ is $f_i$-zpd for 
$i=0,1,\dots, k$.
\item[(b)]  $f_i(A)=A$  for  $i=1,\dots, k$.
\end{enumerate}
 Then the algebra $A$ is $f$-zpd where 
$
f= f_{0} \left( f_{1},\dots,f_{k} \right).
$
\end{corollary}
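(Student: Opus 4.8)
The plan is to derive the corollary directly from Theorem \ref{invariantunderendomorphisms} by specializing every subspace to the whole algebra. Concretely, I would apply the theorem with $B^{(i)}_j = A$ for all $i = 1, \dots, k$ and all $j = 1, \dots, m_i$, so that each $C_i = A \times \cdots \times A = A^{m_i}$ and hence $f_i(C_i) = f_i(A)$.

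With this choice, the three hypotheses of the theorem reduce to the two hypotheses of the corollary, and I would check them one at a time. For hypothesis (a) of the theorem, by hypothesis (b) of the corollary we have $f_i(C_i) = f_i(A) = A$, which is trivially a vector subspace of $A$. For hypothesis (b) of the theorem, I would invoke the remark following the generalized definition that $A^{m_i}$ being $f_i$-zpd means exactly that $A$ is $f_i$-zpd; this holds by hypothesis (a) of the corollary for $i = 1, \dots, k$. Finally, for hypothesis (c), using $f_i(C_i) = A$ once more, the product $f_1(C_1) \times \cdots \times f_k(C_k)$ equals $A \times \cdots \times A$, and its being $f_0$-zpd is again the same as $A$ being $f_0$-zpd, which is hypothesis (a) of the corollary for $i = 0$.

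Having verified all three hypotheses, Theorem \ref{invariantunderendomorphisms} gives that $C_1 \times \cdots \times C_k = A^m$, where $m = m_1 + \cdots + m_k$ is the degree of $f = f_0(f_1,\dots,f_k)$, is $f$-zpd; equivalently, $A$ is $f$-zpd, as desired.

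I do not expect any genuine obstacle here, since all the real work has already been carried out in the proof of the theorem. The only point to isolate is the repeated use of the surjectivity hypothesis $f_i(A) = A$: it is precisely what turns the subspace $f_i(C_i)$ appearing in the theorem into all of $A$, thereby collapsing the two technical conditions (a) and (c) of the theorem into the clean $f_i$-zpd and $f_0$-zpd assumptions of the corollary.
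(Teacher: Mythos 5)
Your proof is correct and is exactly the argument the paper intends: the paper calls the corollary ``immediate'' from Theorem \ref{invariantunderendomorphisms}, and your specialization $B^{(i)}_j = A$, together with the observation that $f_i(A)=A$ collapses conditions (a) and (c) of the theorem into the corollary's hypotheses, is that argument spelled out.
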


The applicability of  Theorem \ref{invariantunderendomorphisms} and Corollary \ref{fzpdcorollary} of course depends on the validity of the L'vov-Kaplansky conjecture
and its variants.

We continue with a lemma needed for another corollary to Theorem \ref{invariantunderendomorphisms}.
Recall that ${\rm sl}_d(F)$ stands for the Lie algebra of trace zero matrices in $M_d(F)$.

\begin{lemma} \label{commutatorzpd}
  Let $f_0 = [x_1, x_2]$ and let $B_1,B_2\in \{{\rm sl}_d(F), M_d(F)\}$. Then the set
  $B_1\times B_2$ is $f_0$-zpd,
  %for ${\rm sl}_d(F) \times M_d(F)$, $M_d(F) \times {\rm sl}_d(F)$ and  ${\rm sl}_d(F) \times {\rm sl}_d(F)$, 
  provided that
{\rm char($F$)} is $0$ or does not divide $d$.
\end{lemma}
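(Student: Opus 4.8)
The plan is to reduce every case to the single case $B_1 = B_2 = M_d(F)$, which is exactly the assertion that $M_d(F)$ is zLpd and is thus already supplied by Theorem \ref{zLpdzJpd}. The reduction rests on the observation that the hypothesis on ${\rm char}(F)$ makes $d$ invertible in $F$, so the trace furnishes a linear projection $\pi \colon M_d(F) \to {\rm sl}_d(F)$ given by $\pi(x) = x - d^{-1}\,{\rm tr}(x)\,I$; it restricts to the identity on ${\rm sl}_d(F)$ and satisfies $x - \pi(x) \in F\cdot I$.

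First I would isolate the one structural fact driving the argument: because $I$ is central, the value $[x,y]$ is unaffected by adding a scalar multiple of $I$ to either entry. Writing $\pi_i = {\rm id}$ when $B_i = M_d(F)$ and $\pi_i = \pi$ when $B_i = {\rm sl}_d(F)$, each $\pi_i \colon M_d(F) \to B_i$ is linear, surjective onto $B_i$, the identity on $B_i$, and has $x - \pi_i(x) \in F\cdot I$; hence $[\pi_1(a), \pi_2(b)] = [a,b]$ for all $a,b \in M_d(F)$.

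Given a bilinear functional $\varphi \colon B_1 \times B_2 \to F$ preserving zeros of $[x_1,x_2]$, I would then set $\tilde\varphi(a,b) = \varphi(\pi_1(a), \pi_2(b))$, a bilinear functional on $M_d(F) \times M_d(F)$. If $[a,b] = 0$ then $[\pi_1(a),\pi_2(b)] = 0$ with $\pi_1(a) \in B_1$ and $\pi_2(b) \in B_2$, so $\tilde\varphi(a,b) = 0$; thus $\tilde\varphi$ preserves zeros of the commutator on all of $M_d(F) \times M_d(F)$. Applying the zLpd property of $M_d(F)$ yields a linear functional $\tau$ on $M_d(F)$ with $\tilde\varphi(a,b) = \tau([a,b])$, and restricting to $B_1 \times B_2$ (where each $\pi_i$ is the identity) gives $\varphi(a,b) = \tau([a,b])$, as desired.

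The hypothesis on ${\rm char}(F)$ enters only through the existence of $\pi$; the zLpd input itself holds over an arbitrary field. Consequently the sole point demanding care is the verification that $\tilde\varphi$ still annihilates commuting pairs, i.e.\ the identity $[\pi_1(a),\pi_2(b)] = [a,b]$, which follows immediately from the centrality of $I$. I therefore do not anticipate any genuine obstacle; the content is the reduction itself rather than any computation.
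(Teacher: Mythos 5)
Your proof is correct and is essentially the paper's argument: precomposing $\varphi$ with the trace projections $\pi_1,\pi_2$ is exactly the paper's extension of $\varphi$ to $M_d(F)^2$ by zero on $F\cdot 1$ (via the decomposition $M_d(F)={\rm sl}_d(F)\oplus F\cdot 1$), after which both proofs check zero-preservation using the centrality of the identity and then invoke the zLpd property of $M_d(F)$ from Theorem \ref{zLpdzJpd}. Your uniform treatment of all four cases via $\pi_1,\pi_2$ is a mild streamlining — the paper handles the cases separately and nominally writes the trace-zero part as a commutator $[a_1,a_2]$, which is not actually needed — but the substance is the same.
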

\begin{proof}
We know that $M_d(F)$ is zLpd (see Theorem \ref{zLpdzJpd}), which means that $M_d(F)\times M_d(F)$ is $f_0$-zpd.

Let $\varphi \colon {\rm sl}_d(F) \times M_d(F) \to F$ be a bilinear functional preserving zeros of $f$. Our assumption on char$(F)$ implies that $M_d(F) = {\rm sl}_d(F) \oplus F\cdot 1$. Therefore, we  extend $\varphi$ to $M_d(F)^2$ by setting $\varphi(1, a) = 0$ for all $a \in M_d(F)$. Let now $a, b \in M_d(F)$ be such that $[a,b]=0$. Writing $a = [a_1, a_2] + \lambda 1$ with $a_1 , a_2 \in M_d(F)$ and $\lambda \in F$, we thus have $[[a_1,a_2],b]=0$. Since $[a_1,a_2]\in {\rm sl}_d(F)$,
it follows that
$$
 \varphi(a,b) = \varphi([a_1,a_2],b) = 0.
$$
As $M_d(F)$ is zLpd, there exists a linear functional $\tau \colon M_d(F) \to F$ such that $\varphi(c,d)=\tau([c,d])$ for all $c,d \in M_d(F)$, and so in particular for all $c \in {\rm sl}_d(F)$ and $ d \in M_d(F)$. This proves that the set ${\rm sl}_d(F)\times M_d(F)$
is $f_0$-zpd.

The $M_d(F) \times {\rm sl}_d(F)$ case
can be handled similarly, and so can be the 
${\rm sl}_d(F) \times {\rm sl}_d(F)$ case. Indeed,  one extends a bilinear functional $\varphi$ defined on $ {\rm sl}_d(F) \times {\rm sl}_d(F)$ to $M_d(F)^2$ by setting
$
\varphi(1,a)=\varphi(a,1)=0$
for all $ a \in M_d(F)$.
\end{proof}

\begin{corollary} \label{multliemonomialszpd}
If $f$ is a multilinear Lie monomial, then the algebra $M_{d}(F)$ is $f$-zpd, provided that
{\rm char($F$)} is $0$ or does not divide $d$.
\end{corollary}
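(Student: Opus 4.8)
The plan is to argue by induction on the degree $m$ of the multilinear Lie monomial $f$, peeling off one bracket at a time and feeding the pieces into Theorem~\ref{invariantunderendomorphisms} with outer polynomial $f_0 = [x_1,x_2]$. Recall that a Lie monomial is either a single variable or of the form $f = [f_1,f_2]$, where $f_1,f_2$ are Lie monomials in disjoint sets of variables. The base cases are immediate: $f = x_1$ is trivially $f$-zpd (take $\tau=\varphi$), and $f=[x_1,x_2]$ is $f$-zpd because $M_d(F)$ is zLpd by Theorem~\ref{zLpdzJpd}.

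Before running the induction I would record the auxiliary fact that governs hypothesis~(a) of Theorem~\ref{invariantunderendomorphisms}: for a multilinear Lie monomial of degree at least $2$ the image $f(M_d(F))$ equals ${\rm sl}_d(F)$, while for $f=x_i$ it is all of $M_d(F)$; in either case the image is a vector subspace lying in $\{{\rm sl}_d(F), M_d(F)\}$. This is proved by the same induction: writing $f=[f_1,f_2]$, the image of $f$ is the set of single commutators $[u,v]$ with $u\in f_1(M_d(F))$ and $v\in f_2(M_d(F))$, so it suffices to know that $[M_d(F),M_d(F)]$, $[{\rm sl}_d(F),M_d(F)]$ and $[{\rm sl}_d(F),{\rm sl}_d(F)]$, taken as sets of single commutators, all coincide with ${\rm sl}_d(F)$. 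The first is the classical theorem that every trace-zero matrix is a commutator; the other two follow from it by the observation that replacing an argument $a$ of a commutator by $a-\lambda 1$ leaves the commutator unchanged, so the scalar part of either argument may be discarded.

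With this in hand the inductive step is a direct application of Theorem~\ref{invariantunderendomorphisms} with $k=2$, $f_0=[x_1,x_2]$, and $C_i=M_d(F)^{m_i}$ for $i=1,2$, where $f=f_0(f_1,f_2)=[f_1,f_2]$. Hypothesis~(a) holds by the image computation above; hypothesis~(b) holds by the induction hypothesis applied to the lower-degree monomials $f_1$ and $f_2$; and hypothesis~(c), that $f_1(C_1)\times f_2(C_2)$ is $f_0$-zpd, follows from Lemma~\ref{commutatorzpd}, since each factor $f_i(C_i)$ lies in $\{{\rm sl}_d(F),M_d(F)\}$ and the characteristic hypothesis is exactly the one required by that lemma. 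The theorem then yields that $C_1\times C_2=M_d(F)^m$ is $f$-zpd, that is, that $M_d(F)$ is $f$-zpd, completing the induction.

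The main obstacle is the image computation, which is precisely hypothesis~(a) and is the place where the L'vov--Kaplansky phenomenon could in principle intrude. For general multilinear polynomials it is open whether the image is a subspace, but for Lie monomials it is elementary: everything reduces to the classical fact that trace-zero matrices are single commutators, together with the scalar-shift trick that upgrades it to the ${\rm sl}_d(F)$-versions. I would also emphasize that the characteristic restriction enters only through Lemma~\ref{commutatorzpd} (via the decomposition $M_d(F)={\rm sl}_d(F)\oplus F\cdot 1$), and not through the image fact, which holds over an arbitrary field.
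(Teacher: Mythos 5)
Your proof is correct and follows essentially the same route as the paper's: the same induction on the degree, the same image computation $f(M_d(F))={\rm sl}_d(F)$ via Albert--Muckenhoupt, and the same application of Lemma \ref{commutatorzpd} and Theorem \ref{invariantunderendomorphisms} (with $f_0=[x_1,x_2]$, $k=2$) in the inductive step.

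One correction to your closing remark, though: the image fact does \emph{not} hold over an arbitrary field, and your own proof of it secretly uses the characteristic assumption. Discarding ``the scalar part of either argument'' presupposes that every matrix decomposes as a trace-zero matrix plus a scalar, i.e., that $M_d(F)={\rm sl}_d(F)\oplus F\cdot 1$, and this fails precisely when ${\rm char}(F)$ divides $d$. The failure is not merely in the argument: for $d=2$ and ${\rm char}(F)=2$, a direct computation shows that the commutator of any two trace-zero $2\times 2$ matrices is a scalar matrix, so the set of such commutators is $F\cdot 1\subsetneq {\rm sl}_2(F)$. The paper invokes the characteristic hypothesis at exactly this point of the image computation, which is why it appears there and not only in Lemma \ref{commutatorzpd}. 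Since the corollary assumes this hypothesis anyway, your proof of the corollary itself is unaffected; only the claim of extra generality in the last paragraph should be deleted.
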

\begin{proof}
First let us show that $f(M_{d}(F))$ is a vector space. In fact, we claim that  $f(M_{d}(F))={\rm sl}_d(F)$, unless the degree $m$ of $f$ is $1$, in which case  $f(M_{d}(F))$ is obviously equal to $M_{d}(F)$.
We may therefore assume that  $m>1$ and that our claim is true for Lie monomials of degree less than $m$. Write $f=[f_{1},f_{2}]$, where $f_{1}$ and $f_2$ are multilinear Lie monomials in distinct variables of degree at most $m-1$. By 
\cite{AlbertMuckenhoupt}, every matrix in 
sl$_d(F)$ is  a commutator of two matrices 
from $M_d(F)$. However,
since $M_d(F) = {\rm sl}_d(F) \oplus F\cdot 1$
by the characteristic assumption, it is actually a commutator of two matrices from
sl$_d(F)$. Since, by our assumption, 
$f_1(M_d(F))$ and $f_2(M_d(F))$ contain ${\rm sl}_d(F)$, it follows that $f(M_{d}(F))={\rm sl}_d(F)$.

Let us now prove that $M_d(F)$ is $f$-zpd. There is nothing to prove if $m=1$, so
we may assume that $m> 1$ and that by writing 
$f=[f_1,f_2]$ as above we have that 
$M_d(F)$ is $f_i$-zpd, $i=1,2$.
Since $f_{i}(M_{d}(F))\in \{{\rm sl}_d(F), M_d(F)\}$, taking into account Lemma \ref{commutatorzpd} we can apply Theorem \ref{invariantunderendomorphisms} to conclude that $M_{d}(F)$ is $f$-zpd.
\end{proof}

It is clear that the method of proof can be used for some other polynomials. For example, using the fact that the algebra $M_d(F)$ is zJpd (Theorem \ref{zLpdzJpd})
and that the polynomial
$f_0=x_1x_2+x_2x_1$ obviously satisfies $f_0(M_d(F)) = M_d(F)$ provided that char$(F)\ne 2$, we see that Corollary \ref{fzpdcorollary} yields the following
result (by a  Jordan monomial we mean an element of the free special Jordan algebra).

\begin{corollary} \label{l}
Let {\rm char($F$)}$\ne 2$.
If $f$ is a multilinear Jordan monomial, then the algebra $M_{d}(F)$ is $f$-zpd.
\end{corollary}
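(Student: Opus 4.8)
The plan is to follow the proof of Corollary~\ref{multliemonomialszpd} line by line, replacing the Lie product by the Jordan product $x_1\circ x_2 = x_1x_2 + x_2x_1$ and replacing the appeal to Theorem~\ref{invariantunderendomorphisms} (together with Lemma~\ref{commutatorzpd}) by the cleaner Corollary~\ref{fzpdcorollary}. I would argue by induction on the degree $m$ of the multilinear Jordan monomial $f$. When $m=1$ we have $f=x_1$ and there is nothing to prove. For $m>1$, I would use the fact that a (nonassociative) Jordan monomial of degree $m>1$ is, by construction, a single outermost Jordan product $f = f_0(f_1,f_2) = f_1\circ f_2$, where $f_0 = x_1\circ x_2$ and $f_1,f_2$ are multilinear Jordan monomials of degree $<m$; since $f$ is multilinear, $f_1$ and $f_2$ involve disjoint sets of variables, as required by Corollary~\ref{fzpdcorollary}.

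It then remains to verify the two hypotheses of Corollary~\ref{fzpdcorollary} with $k=2$. For hypothesis (b), I would show by induction that $f_i(M_d(F)) = M_d(F)$ for every multilinear Jordan monomial $f_i$. The base case is clear, and for the inductive step one has $f_i(M_d(F)) = \{\,b\circ c \mid b,c\in M_d(F)\,\}$ (because $f_i$ splits as a Jordan product of two monomials with full image in disjoint variables); this set is all of $M_d(F)$ since, with $\mathrm{char}(F)\ne 2$, one has $1\circ(\tfrac{1}{2}a)=a$ for every $a$. This is exactly the point where the characteristic hypothesis enters. For hypothesis (a), the cases $i=1,2$ are the inductive hypothesis that $M_d(F)$ is $f_i$-zpd, while the case $i=0$ is precisely the statement that $M_d(F)$ is zJpd, which is part of Theorem~\ref{zLpdzJpd}. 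Applying Corollary~\ref{fzpdcorollary} then yields that $M_d(F)$ is $f$-zpd.

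I do not expect a genuine obstacle here: the Jordan case is in fact simpler than the Lie case of Corollary~\ref{multliemonomialszpd}. The reason is that the relevant image $f_i(M_d(F))$ is always the full algebra $M_d(F)$ rather than $\mathrm{sl}_d(F)$, so one can invoke Corollary~\ref{fzpdcorollary} directly and avoid the subspace bookkeeping that forced the use of Lemma~\ref{commutatorzpd} and the full strength of Theorem~\ref{invariantunderendomorphisms} in the Lie setting. The only points that require (brief) care are the structural fact that every multilinear Jordan monomial decomposes as a single outermost Jordan product of two lower-degree monomials in disjoint variables, and the full-image computation above, both of which are routine.
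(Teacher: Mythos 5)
Your proposal is correct and takes essentially the same route as the paper: the paper also deduces the corollary from Corollary \ref{fzpdcorollary}, using that $M_d(F)$ is zJpd (Theorem \ref{zLpdzJpd}) and that $f_0 = x_1x_2+x_2x_1$ satisfies $f_0(M_d(F)) = M_d(F)$ when $\mathrm{char}(F)\ne 2$. You merely spell out the induction on the degree of the Jordan monomial and the full-image computation, which the paper leaves implicit as an obvious adaptation of the argument for Corollary \ref{multliemonomialszpd}.
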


\subsection{Polynomials given by cyclic permutations} 

In this subsection we deal with a multilinear polynomial $f$
whose monomials correspond to a cyclic permutation and satisfies the condition that the sum of its coefficients is nonzero. The only assumption that we will require on our algebra is that it is generated by idempotents. Our result therefore in particular holds for the matrix algebra $M_d(S)$ where $S$ is any unital algebra, see \cite[Corollary 2.4]{Bresar2021}. 
Its special cases where 
$f=x_1x_2$ or 
$f=x_1x_2 + x_2x_1$ are well known, see the first statement of Theorem \ref{zLpdzJpd}. 
In our proof we will use ideas from the proof concerning the second case, i.e., the proof
that an algebra generated by idempotents is zJpd (see  \cite{ALH} or \cite[Theorem 3.15]{Bresar2021}).

Let us now state the theorem.

\begin{theorem} \label{Zanpolynomial}
Let {\rm char$(F)$} $\ne 2$, let $\alpha_1,\dots,\alpha_m\in F$ be such that
$\sum_{i=1}^m \alpha_i\ne 0$,
and let
$$
f(x_1, \ldots, x_m) = \alpha_1 x_1 \cdots x_m + \alpha_2 x_2 \cdots x_m x_1 +  \cdots + \alpha_m x_m x_1 \cdots x_{m-1}.$$
If an $F$-algebra $A$ is generated by idempotents, then $A$ is $f$-zpd.
\end{theorem}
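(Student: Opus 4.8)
The plan is to collapse the whole statement to a single functional identity and then prove that identity by a Peirce analysis with respect to the idempotent generators, in the spirit of the zJpd argument cited before Theorem \ref{Zanpolynomial}. First, by Remark \ref{zpdproportionalpolynomials} I may rescale $f$ so that $f(1,\dots,1)=\sum_{i=1}^m\alpha_i=1$ (the hypothesis $\sum\alpha_i\ne0$ is exactly what makes this legitimate, and the rescaled $f$ is still of the same cyclic shape). Since $f(1,\dots,1)=1\ne0$, Lemma \ref{equivalentconditiontau} shows that $A$ is $f$-zpd precisely when every $m$-linear $\varphi\colon A^m\to F$ preserving zeros of $f$ satisfies
\[
\varphi(a_1,\dots,a_m)=\varphi\bigl(f(a_1,\dots,a_m),1,\dots,1\bigr)\qquad(\star)
\]
for all $a_i\in A$. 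Thus the candidate functional is forced to be $\tau(c)=\varphi(c,1,\dots,1)$, and the entire theorem reduces to establishing $(\star)$ for an arbitrary zero-preserving $\varphi$.

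Both sides of $(\star)$ are $m$-linear in $(a_1,\dots,a_m)$, so it suffices to verify it when each $a_i$ runs over a spanning set of $A$; since $A$ is generated by idempotents, I take this set to consist of products of idempotents. The structural engine is a Peirce decomposition relative to a fixed idempotent $e$: writing $e'=1-e$ and $A_{pq}$ for the four Peirce components, I substitute Peirce-homogeneous elements $y_j\in A_{p_jq_j}$ into $f$. A rotation $y_iy_{i+1}\cdots y_{i-1}$ is nonzero only if the consecutive Peirce links $q_j=p_{j+1}$ all chain up, with the single link immediately preceding position $i$ allowed to break. Letting $B=\{\,j:q_j\ne p_{j+1}\,\}$ (indices read cyclically), the term indexed by $i$ can survive only when $B\subseteq\{i-1\}$. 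Hence if $|B|\ge 2$ then $f(y_1,\dots,y_m)=0$, and the zero-preservation of $\varphi$ forces $\varphi$ to vanish on every such homogeneous tuple; if $|B|=1$ exactly one rotation survives and $f$ collapses to a single monomial; and if $B=\varnothing$ all $m$ weighted rotations contribute.

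These vanishing relations drive the argument. Expanding each $a_i$ into its Peirce components and discarding the abundant tuples with at least two broken links, I am left only with tuples that chain completely or chain with a single cut. From the single-cut vanishing one extracts sliding-type identities for $\varphi$ — the analogues of the elementary relation $\varphi(ae,b)=\varphi(a,eb)$ used in the product case — which allow an idempotent factor to be moved between adjacent arguments and ultimately absorbed into the $\tau$-slot. Iterating these, an arbitrary product-of-idempotents argument is rewritten until $(\star)$ is reduced to the case where every $a_i$ is a single idempotent, where it is checked directly using $f(e,\dots,e)=e$. The hypothesis $\mathrm{char}(F)\ne2$ enters exactly as in the Jordan case, through the idempotent identities needed to separate the Peirce components and to treat the symmetric contributions.

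The step I expect to be the main obstacle is the fully-chaining case $B=\varnothing$, in which no link breaks and all $m$ weighted rotations of $f$ survive simultaneously: here the vanishing relations give no information, and one must combine several almost-chaining substitutions and invoke the normalization $\sum_i\alpha_i=1$ to show that the weighted combination of the surviving monomials collapses to the value predicted by $(\star)$. A secondary, more bookkeeping-type difficulty is that $A$ is only \emph{generated} by — not linearly spanned by — idempotents, so the passage to idempotent arguments must proceed through the sliding relations rather than a naive substitution, and one must confirm that the resulting prescription is consistent; this consistency is precisely what Lemma \ref{equivalentconditiontau2} is designed to guarantee, so I would route the final well-definedness check through that lemma.
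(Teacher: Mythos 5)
Your opening reduction is exactly how the paper begins---rescale via Remark \ref{zpdproportionalpolynomials} so that $\sum_i\alpha_i=1$, then use Lemma \ref{equivalentconditiontau} to reduce everything to the identity $(\star)$---and your observation that a Peirce-homogeneous tuple with at least two broken cyclic links is a zero of $f$ is correct. But past that point the proposal is a plan rather than a proof, and the places where you defer are precisely where all the work lies. The ``sliding-type identities'' are never derived; in the paper they are concrete substitutions such as
\[
f\bigl(ea_1,\ldots, a_{m-1}h,(\alpha_1 + \alpha_m )e - \alpha_m 1\bigr)  = 0, \qquad
f\bigl(ha_1,\ldots, a_{m-1}e,(\alpha_1 + \alpha_m )e - \alpha_1 1\bigr) = 0,
\]
whose usefulness depends on dividing by $\alpha_1+\alpha_m$. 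This requires the arithmetic fact that some consecutive (cyclic) pair of coefficients satisfies $\alpha_i+\alpha_{i+1}\ne 0$, and this---not any Peirce-separation issue---is where ${\rm char}(F)\ne 2$ actually enters: if $m$ is odd and all consecutive sums vanished, then $\alpha_1=-\alpha_2=\cdots=\alpha_m=-\alpha_1$ would force $2\alpha_1=0$, hence all $\alpha_i=0$. Your proposal never records this fact, and your stated account of the role of ${\rm char}(F)\ne2$ is wrong; without a nonvanishing consecutive pair the single-cut relations you want to ``extract'' degenerate.

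Second, you explicitly concede the fully-chaining case $B=\varnothing$, calling it the main obstacle and saying only that one ``must combine several almost-chaining substitutions.'' That case \emph{is} the theorem: as you note, the vanishing relations give no information there. The paper resolves it not by a simultaneous Peirce analysis in all $m$ slots (your stated plan) but by a double induction: on the degree $m$, using that $f(x_1,\ldots,x_{m-1},1)$ is again a cyclic polynomial of the same form so that the $(m-1)$-linear functional $\varphi(\,\cdot\,,\ldots,\,\cdot\,,1)$ satisfies the induction hypothesis; and, inside that, on the number $n$ of idempotent factors in the \emph{last argument only}, the set $S$ of admissible last arguments being a subspace, with only $a_1$ and $a_{m-1}$ decomposed relative to the outer idempotents $e_1, e_{n+1}$ of the product $e_1\cdots e_{n+1}$. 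Nothing resembling this inductive architecture appears in your proposal, so the gap is not a routine verification but the core of the argument. Finally, two of your side remarks are off: an algebra generated by idempotents \emph{is} linearly spanned by products of idempotents, so the ``secondary difficulty'' you raise does not exist; and once $(\star)$ holds, $\tau(c)=\varphi(c,1,\ldots,1)$ is a genuine linear functional, so there is no well-definedness to route through Lemma \ref{equivalentconditiontau2}.
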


\begin{proof} In light of Remark \ref{zpdproportionalpolynomials}, we may assume without loss of generality that $\sum_{i=1}^m \alpha_i=1$.
Let $\varphi$ be an $m$-linear functional preserving zeros of $f$.
By Lemma \ref{equivalentconditiontau}, it suffices to prove that
 $\varphi$ satisfies 
\begin{equation}
    \label{zz}
\varphi(a_1,\ldots, a_m) =  \varphi(f(a_1,\ldots,a_m),1,\ldots,1)
\end{equation}
for all $a_1,\dots,a_m\in A$. 

Set $\alpha_{m+1}=\alpha_1$. We claim that
$\alpha_i + \alpha_{i+1} \ne 0$ for some 
$i$. Indeed, if $m$ is even then this is immediate from 
$$1 = (\alpha_1+\alpha_2) +\dots + (\alpha_{m-1}+\alpha_m), $$
and if $m$ is odd this follows from the observation that
\[
    \alpha_1 = -\alpha_2 = \cdots = \alpha_m = -\alpha_1
\]
implies $2\alpha_1=0$ and hence $\alpha_1=0$, so we can again use the assumption that the sum of all $\alpha_i$ is $1$. After relabeling, if necessary, we may assume that $i=m$, i.e., $$\alpha_1+\alpha_m\ne 0.$$

Let $S$ denote the set of all
$s\in A$ such that
$$\varphi(a_1,\ldots, a_{m-1},s) = \varphi(f(a_1,\ldots, a_{m-1},s), 1, \ldots,1)$$
for all $a_1,\dots,a_{m-1}\in A$. To prove (\ref{zz}), we have to show that $S=A$.
We will establish this by induction on $m$. 

In the base case where $m=2$ we have $$f(x_{1},x_{2})=\alpha_{1}x_{1}x_{2}+\alpha_{2}x_{2}x_{1}$$ with $\alpha_{1}+\alpha_2=1$. The proof that we will give is just a minor modification of the proof that $A$ is zJpd (i.e., of the case 
where $\alpha_1=\alpha_2 $). However, we provide details for  the sake of completeness.

Since $S$ is a vector subspace of $A$, it is enough to show that $S$ contains every product of the form $e_{1}e_{2}\cdots e_{n}$ where $e_{i}$ are idempotents in $A$. We proceed by induction on $n$. Let $e\in A$ be an idempotent, and let us first prove that $e\in S$. Considering an arbitrary $a\in A$ and writing $h=1-e$, we have
\[
a=eae+hae+eah+hah
\] 
and hence
\[
\varphi(a,e)=\varphi(eae,e)+\varphi(hae,e)+\varphi(eah,e)+\varphi(hah,e).
\]
Recalling that $\alpha_1+\alpha_2=1$, one can easily check that 
\[
f(hae,e-\alpha_1)=f(eah,e-\alpha_2)=f(eae,h)=f(hah,e)=0,
\]
which gives us the following relations
\begin{equation*}\label{relatiosdegree2}
\begin{aligned}
& \varphi(hae,e) = \alpha_1\varphi(hae,1), \\
& \varphi(eah,e) = \alpha_2\varphi(eah,1), \\
& \varphi(eae,e) = \varphi(eae,1), \\
& \varphi(hah,e) = 0.
\end{aligned}
\end{equation*}
Consequently,
\begin{equation*}
\begin{aligned}
\varphi(a,e) &= 
\varphi(eae,e)+\varphi(eah,e)+\varphi(hae,e)+\varphi(hah,e)  \\ & =
\varphi(eae,1)+\alpha_1\varphi(hae,1)+\alpha_2\varphi(eah,1)
\\ & =
\varphi(eae+\alpha_1 hae+\alpha_2 eah,1)\\ &
=\varphi(f(a,e),1).
\end{aligned}
\end{equation*}
We have thus shown that $e\in S$.  

Next, assuming that $S$ contains products of $n$ idempotents, let us prove that $S$ also contains $e_{1}\cdots e_{n}e_{n+1}$, where each $e_{i}\in A$ is an idempotent. Write
\[
h_{1}=1-e_{1}, h_{n+1}=1-e_{n+1}, t=e_{2}\cdots e_{n},
\]
($t=1$ if $n=1$), so we want  to prove that $e_1te_{n+1}\in S$. For any $a\in A$, we have
\[
a=e_{n+1}ae_1+h_{n+1}ae_{1}+e_{n+1}ah_{1}+h_{n+1}ah_{1}.
\]
Therefore,
\begin{equation*}
\begin{aligned}
\varphi(a,e_{1}te_{n+1})&= \varphi(e_{n+1}ae_{1},e_{1}te_{n+1})+\varphi(h_{n+1}ae_{1},e_{1}te_{n+1})\\
 & \ \ \ +\varphi(e_{n+1}ah_{1},e_{1}te_{n+1})+\varphi(h_{n+1}ah_1,e_{1}te_{n+1})\\
&=\varphi(e_{n+1}ae_{1},te_{n+1}-h_{1}t+h_{1}th_{n+1})\\
& \ \ \ +\varphi(h_{n+1}ae_{1},te_{n+1}-h_{1}te_{n+1})\\
& \ \ \ +\varphi(e_{n+1}ah_{1},e_{1}t-e_{1}tf_{n+1})\\
&\ \ \ +\varphi(h_{n+1}ah_{1},e_{1}ae_{n+1}). 
\end{aligned}
\end{equation*}

Since
\begin{equation*}
\begin{aligned}
f(e_{n+1}ae_{1},h_{1}th_{n+1})=0,\\
f(h_{n+1}ae_{1},h_{1}ae_{n+1})=0,\\
f(e_{n+1}ah_{1},e_{1}ah_{n+1})=0,\\
f(h_{n+1}ah_{1},e_{1}ae_{n+1})=0
\end{aligned}
\end{equation*}
and hence 
\begin{equation*}
\begin{aligned}
\varphi(e_{n+1}ae_{1},h_{1}th_{n+1})=0,\\
\varphi(h_{n+1}ae_{1},h_{1}ae_{n+1})=0,\\
\varphi(e_{n+1}ah_{1},e_{1}ah_{n+1})=0,\\
\varphi(h_{n+1}ah_{1},e_{1}ae_{n+1})=0,
\end{aligned}
\end{equation*}
we can conclude that
\begin{equation*}
\begin{aligned}
\varphi(a,e_1 te_{n+1})&=\varphi(e_{n+1}ae_{1},te_{n+1}-h_{1}t)\\
&\ \ \ +\varphi(h_{n+1}ae_{1},te_{n+1})+\varphi(e_{n+1}ah_{1},e_{1}t).
\end{aligned}
\end{equation*} Since $te_{n+1}, h_1t, te_{n+1}, e_1t$ lie in $S$ by the induction 
 hypothesis, it follows   that
\begin{eqnarray*}
\begin{aligned}
&\varphi(e_{n+1}ae_{1},te_{n+1})=\varphi(\alpha_{1}e_{n+1}ae_{1}te_{n+1}+\alpha_{2} te_{n+1}ae_{1},1),\\
&\varphi(e_{n+1}ae_{1},h_{1}t)=\varphi(\alpha_{2} h_{1}te_{n+1}ae_{1},1),\\
&\varphi(h_{n+1}ae_{1},te_{n+1})=\varphi(\alpha_{1}h_{n+1}ae_{1}te_{n+1},1),\\
&\varphi(e_{n+1}ah_{1},e_{1}t)=\varphi(\alpha_{2} e_{1}te_{n+1}ah_{1},1).\\
\end{aligned}
\end{eqnarray*}
One easily checks that this implies that
\[
\varphi(a,e_{1}te_{n+1})=\varphi(f(a,e_{1}te_{n+1}),1).
\]
Hence $e_{1}te_{n+1}\in S$, which concludes the proof for the base case where $m=2$.

Assume now $m > 2$ and that the result holds for $m-1$, and so in particular for the polynomial $f(x_1,\dots,x_{m-1},1)$ (which is also of the required form). As  the $(m-1)$-linear functional $\varphi(a_1,\dots,a_{m-1},1)$ preserves its zeros, we have 
\begin{equation}
    \label{zz2}
\varphi(a_1,\ldots, a_{m-1},1) =  \varphi(f(a_1,\ldots,a_{m-1},1),1,\ldots,1)
\end{equation}
for all $a_1,\dots,a_{m-1}\in A$. 

As in the $m=2$ case, it is enough to show that $S$ contains every element of the form $e_1e_2\cdots e_n$ where $e_i$ are idempotents in $A$.
The proof that we will give is conceptually similar to the one just given, but the necessary changes are non-obvious.
We proceed by induction on $n$.

To handle the base case, take an idempotent $e = e_1 \in A$. We must prove that $e \in S$. Denote $1 - e$ by $h$ and write  $a_1 = e a_1 + h a_1$ and $ a_{m-1} = a_{m-1}e + a_{m-1}h $. Thus,
\begin{equation} \label{longform}
\begin{split}
\varphi(a_1,\ldots, a_{m-1},e)  =& \varphi(e a_1,\ldots, a_{m-1}e,e) + \varphi(e a_1,\ldots, a_{m-1}h,e)\\
&+\varphi(h a_1,\ldots, a_{m-1}e,e) + \varphi(h a_1,\ldots, a_{m-1}h,e).
\end{split}
\end{equation}
%
%(if $m = 2$ then write $A_1 = eA_1e + eA_1h + hA_1e + hA_1h$ and it continues similarly).
It is easy to see that 
\begin{equation} \label{someconditions}
\begin{aligned}
& f(ea_1,\ldots, a_{m-1}e,e - 1) = 0, \\
& f(ea_1,\ldots, a_{m-1}h,(\alpha_1 + \alpha_m )e - \alpha_m 1)  = 0, \\
& f(ha_1,\ldots, a_{m-1}e,(\alpha_1 + \alpha_m )e - \alpha_1 1) = 0, \\
& f(ha_1,\ldots, a_{m-1}h,e) = 0.
\end{aligned}
\end{equation}
Of course, $\varphi$ then satisfies the same identities, which can be written as 
\begin{equation*}
\begin{aligned}
& \varphi(ea_1,\ldots, a_{m-1}e,e) = \varphi(ea_1,\ldots, a_{m-1}e,1), \\
& (\alpha_1 + \alpha_m)\varphi(ea_1,\ldots, a_{m-1}h,e) = \alpha_m  \varphi(ea_1,\ldots, a_{m-1}h,1), \\
& (\alpha_1 + \alpha_m)\varphi(ha_1,\ldots, a_{m-1}e,e) = \alpha_1  \varphi(ha_1,\ldots, a_{m-1}e,1) , \\
& \varphi(ha_1,\ldots, a_{m-1}h,e) = 0.
\end{aligned}
\end{equation*}
Consequently,
 (\ref{longform}) becomes
\begin{equation*}
\begin{aligned}
\varphi(a_1,\ldots, a_{m-1},e) &= 
\varphi(e a_1,\ldots, a_{m-1}e,1)  \\
& \ \ \ + \alpha_m (\alpha_1 + \alpha_m)^{-1}  \varphi(e a_1,\ldots, a_{m-1}h,1) \\ & \ \ \ + \alpha_1 ( \alpha_1 + \alpha_m)^{-1}
\varphi( h a_1,\ldots, a_{m-1}e,1).\\
\end{aligned}
\end{equation*}
Applying (\ref{zz2})  it follows that
\begin{equation*}
\begin{aligned}
\varphi(a_1,\ldots, a_{m-1},e) &= 
\varphi(f(e a_1,\ldots, a_{m-1}e,1),1,\ldots,1)  \\ & \ \ \ + 
\alpha_m(\alpha_1+\alpha_m)^{-1}\varphi(f(ea_1,\ldots,a_{m-1}h,1),1,\ldots,1)
\\ & \ \ \ + 
\alpha_1(\alpha_1 + \alpha_m)^{-1}\varphi(f( h a_1,\ldots, a_{m-1}e,1),1,\ldots,1).
\end{aligned}
\end{equation*}
Using (\ref{someconditions}) we obtain
% \\ & \ \ \ + 
%\varphi(f(h a_1,\ldots, a_{m-1}h,e),1,\ldots,1)\Bigr)\\ &=
\begin{equation*}
\begin{aligned}
\varphi(a_1,\ldots, a_{m-1},e) &=
\varphi(f(e a_1,\ldots, a_{m-1}e,e),1,\ldots,1) \\ & \ \ \ + 
\varphi(f(e a_1,\ldots, a_{m-1}h,e),1,\ldots,1)\\ & \ \ \ + 
\varphi(f(h a_1,\ldots, a_{m-1}e,e),1,\ldots,1).
\end{aligned}
\end{equation*}
Since $f(h a_1,\ldots, a_{m-1}h,e)=0$ (see (\ref{someconditions})), it follows that
\begin{equation*}
\begin{aligned}
\varphi(a_1,\ldots, a_{m-1},e)
=
 \varphi(f(a_1,\ldots, a_{m-1},e),1,\ldots,1).
\end{aligned}
\end{equation*}
This means that $e\in S$, as desired.

 We may now assume that  any product of $n$ idempotents is contained in $S$. Take idempotents
 $e_1,\dots,e_{n+1}$ and let us
 prove that $S$ contains $e_1 \cdots e_n e_{n+1}$. Write
\[
h_1 = 1 - e_1, \ \ h_{n+1} = 1 - e_{n+1}, \ \ t = e_2\cdots e_n 
\]
 ($t=1$ if $n=1$).
We have to show that $e_1te_{n+1}\in S$. Take  $a_1, a_2,\ldots, a_{m-1}\in A$ and write
\[
a_1 = e_{n+1} a_1 + h_{n+1}a_1 \text{ \ and \ } a_{m-1} = a_{m-1}e_1 + a_{m-1}h_1.
\]
We have
\begin{equation*}
\begin{aligned}
\varphi(a_1,\ldots, a_{m-1},e_1te_{n+1}) &= 
\varphi(e_{n+1} a_1,\ldots, a_{m-1}e_1,e_1te_{n+1})  \\ & \ \ \ + 
\varphi(e_{n+1} a_1,\ldots, a_{m-1}h_1,e_1te_{n+1}) \\ & \ \ \ + 
\varphi(h_{n+1} a_1,\ldots, a_{m-1}e_1,e_1te_{n+1})  \\ & \ \ \ + 
\varphi(h_{n+1} a_1,\ldots, a_{m-1}h_1,e_1te_{n+1}) \\ &= 
\varphi(e_{n+1} a_1,\ldots, a_{m-1}e_1,te_{n+1} - h_1t + h_1th_{n+1})  \\ & \ \ \ + 
\varphi(e_{n+1} a_1,\ldots, a_{m-1}h_1,e_1t-e_1th_{n+1}) \\ & \ \ \ + 
\varphi(h_{n+1} a_1,\ldots, a_{m-1}e_1,te_{n+1}-h_1te_{n+1})   \\ & \ \ \ + 
\varphi(h_{n+1} a_1,\ldots, a_{m-1}h_1,e_1te_{n+1}).
\end{aligned}
\end{equation*}
One easily checks that
\begin{equation}\label{moj}
\begin{aligned}
f(e_{n+1} a_1,\ldots, a_{m-1}e_1,h_1th_{n+1}) & = 0, \\
f(e_{n+1} a_1,\ldots, a_{m-1}h_1,e_1th_{n+1}) & = 0, \\ 
f(h_{n+1} a_1,\ldots, a_{m-1}e_1,h_1te_{n+1}) & = 0, \\ 
f(h_{n+1} a_1,\ldots, a_{m-1}h_1,e_1te_{n+1}) & = 0.
\end{aligned}
\end{equation}
As $\varphi$ then satisfies the same identities,
it follows that 
%Using the induction assumption we get
\begin{equation*}
\begin{aligned}
\varphi(a_1,\ldots, a_{m-1},e_1te_{n+1}) & = 
\varphi(e_{n+1} a_1,\ldots, a_{m-1}e_1,te_{n+1} - h_1t) \\ & \ \ \ \ \ \ + 
\varphi(e_{n+1} a_1,\ldots, a_{m-1}h_1,e_1t)\\ & \ \ \ \ \ \ + 
\varphi(h_{n+1} a_1,\ldots, a_{m-1}e_1,te_{n+1}) .
\end{aligned}
\end{equation*}
Since $te_{n+1}, h_1t, e_1t, te_{n+1}\in S$ by the induction assumption, it follows that
\begin{equation*}
\begin{aligned}
\varphi(a_1,\ldots, a_{m-1},e_1te_{n+1})  & = 
\varphi(f(e_{n+1} a_1,\ldots, a_{m-1}e_1,te_{n+1} - h_1t),1,\ldots,1) \\ & \ \ \ \ \ \ + \varphi(f(e_{n+1}a_1,\ldots, a_{m-1}h_1,e_1t),1,\ldots,1)  \\ & \ \ \ \ \ \ + 
\varphi(f(h_{n+1}a_1,\ldots, a_{m-1}e_1,te_{n+1}),1,\ldots,1).
\end{aligned}
\end{equation*}
Applying (\ref{moj}) we 
finally obtain
\begin{equation*}
\begin{aligned}
& \varphi(a_1,\ldots, a_{m-1},e_1te_{n+1}) \\ &  \ \ \  = 
\varphi(f(e_{n+1} a_1,\ldots, a_{m-1}e_1,te_{n+1} - h_1t + h_1th_{n+1}),1,\ldots,1)  \\ & \ \ \ \ \ \ +
\varphi(f(e_{n+1} a_1,\ldots, a_{m-1}h_1,e_1t-e_1th_{n+1}),1,\ldots,1) \\ & \ \ \ \ \ \ + \varphi(f(h_{n+1} a_1,\ldots, a_{m-1}e_1,te_{n+1}-h_1te_{n+1}),1,\ldots,1) \\ &  \ \ \ \ \ \ + \varphi(f(h_{n+1} a_1,\ldots, a_{m-1}h_1,e_1te_{n+1}),1,\ldots,1) \\ & \ \ \ = 
 \varphi(f( a_1,\ldots, a_{m-1},e_1te_{n+1}),1,\ldots,1).
\end{aligned}
\end{equation*}
This means that $e_1te_{n+1}\in S$ and the proof is complete. 
\end{proof}

\subsection{The generalized commutator}

This last subsection is devoted to the {\em generalized commutator}
$$
f(x_1, x_2, x_3)=x_1 x_2 x_3 - x_3 x_2 x_1.
$$
This is one of the  polynomials that deserve  special attention (see, e.g., \cite{KL}), so the question of whether  the  algebra $M_d(F)$ is $f$-zpd occurs naturally. We will show that the answer is affirmative.

Throughout this subsection, we assume that
$\varphi \colon M_d(F)^3\to F$ is a $3$-linear functional such that for all $a,b,c\in M_d(F)$,
\begin{eqnarray}\label{hypothesis}
abc-cba=0 \implies \varphi(a,b,c)=0.
\end{eqnarray}
Our goal is to prove that $\varphi$ satisfies the condition presented in Lemma  \ref{equivalentconditiontau2}. Thus,
assume that $N\ge 1$ and that the matrices
\[
a^{(t)}=\sum_{i,j=1}^{d}a_{ij}^{t}e_{ij}, \ \ \ \ \ \ \
b^{(t)}=\sum_{i,j=1}^{d}b_{ij}^{t}e_{ij}, \ \ \ \ \ \ \
c^{(t)}=\sum_{i,j=1}^{d}c_{ij}^{t}e_{ij},
\]
$t=1,\dots,N$, where $e_{ij}$ are standard matrix units,
satisfy
\begin{equation}
    \label{hh}
 \sum_{t=1}^{N} a^{(t)} b^{(t)} c^{(t)} - c^{(t)} b^{(t)} a^{(t)}=0.\end{equation}
 We have to show that 
\begin{equation}
    \label{hh2}
\sum_{t=1}^{N} \varphi \left(a^{(t)}, b^{(t)}, c^{(t)} \right)=0.\end{equation}

We proceed by a series of lemmas.

\begin{lemma}\label{newhypothesis}
We have
\begin{equation*} 
\sum_{t=1}^{N} \left( \sum_{l=1}^{d}\sum_{\substack{k=1\\ 
k\neq j}}^{d}a_{ik}^{t}b_{kl}^{t}c_{lj}^{t}-c_{ik}^{t}b_{kl}^{t}a_{lj}^{t} +\sum_{\substack{l=1\\
l\neq i}}^{d}a_{ij}^{t}b_{jl}^{t}c_{lj}^{t}-c_{ij}^{t}b_{jl}^{t}a_{lj}^{t} \right) = 0.
\end{equation*}
\end{lemma}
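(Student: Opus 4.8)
The plan is to extract information from the matrix identity (\ref{hh}) one entry at a time. Since a matrix is zero precisely when all of its entries vanish, for every fixed pair of indices $i,j$ the $(i,j)$-entry of the left-hand side of (\ref{hh}) must equal zero. First I would expand each matrix product in terms of scalar entries, using $(a^{(t)}b^{(t)}c^{(t)})_{ij} = \sum_{k,l} a_{ik}^t b_{kl}^t c_{lj}^t$ together with the analogous formula for $c^{(t)}b^{(t)}a^{(t)}$, to rewrite the $(i,j)$-entry of (\ref{hh}) as
\begin{equation*}
\sum_{t=1}^N \sum_{k=1}^d \sum_{l=1}^d \left( a_{ik}^t b_{kl}^t c_{lj}^t - c_{ik}^t b_{kl}^t a_{lj}^t \right) = 0
\end{equation*}
for all $i,j$.

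Next I would reorganize this double sum over $k$ and $l$ along the grouping dictated by the statement: separate the contribution with $k = j$ from that with $k \neq j$, and within the $k=j$ block further separate $l = i$ from $l \neq i$. The terms with $k \neq j$ (ranging over all $l$) together with the terms having $k = j$ and $l \neq i$ are exactly the two inner sums appearing in the lemma. The only summand not accounted for by the lemma's expression is the single ``corner'' term indexed by $k=j$, $l=i$.

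The crucial — and in fact the only substantive — point is that this leftover term vanishes identically. For $k = j$ and $l = i$ the summand is $a_{ij}^t b_{ji}^t c_{ij}^t - c_{ij}^t b_{ji}^t a_{ij}^t$, which is a difference of two products of three scalars from $F$; since $F$ is commutative these two products coincide, so the term is $0$. Consequently, deleting the $k=j$, $l=i$ term from the full entrywise identity changes nothing, and what remains is precisely the expression displayed in the statement, which is therefore $0$. This completes the argument.

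I expect no genuine obstacle here: the proof is pure bookkeeping on matrix entries, with the one nontrivial observation being that commutativity of the scalar entries forces the diagonal-crossing terms to cancel. This is exactly why that particular partial sum can be isolated as the working hypothesis for the subsequent lemmas, even though it is a strictly smaller collection of terms than the full entrywise identity.
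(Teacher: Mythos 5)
Your proof is correct and follows essentially the same route as the paper: take the $(i,j)$-entry of the identity $\sum_t a^{(t)}b^{(t)}c^{(t)} - c^{(t)}b^{(t)}a^{(t)} = 0$, expand it as the full double sum over $k,l$, and observe that the single term with $k=j$, $l=i$, namely $a_{ij}^t b_{ji}^t c_{ij}^t - c_{ij}^t b_{ji}^t a_{ij}^t$, vanishes because the entries are commuting scalars in $F$. The paper's proof is identical in substance, merely stating this cancellation without spelling out the commutativity justification you made explicit.
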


\begin{proof}
Note that for each pair $(i,j)\in\{1,\dots,d\}^{2}$, 
\[
\left( a^{(t)} b^{(t)} c^{(t)} - c^{(t)} b^{(t)} a^{(t)} \right)_{ij}=\sum_{k,l=1}^{d} a_{ik}^{t}b_{kl}^{t}c_{lj}^{t}-c_{ik}^{t}b_{kl}^{t}a_{lj}^{t},
\]
and hence, by (\ref{hh}),
\[
\sum_{t=1}^{N}\sum_{k,l=1}^{d} a_{ik}^{t}b_{kl}^{t}c_{lj}^{t}-c_{ik}^{t}b_{kl}^{t}a_{lj}^{t}=0.
\]
Clearly $a_{ij}^{t}b_{ji}^{t}c_{ij}^{t}-c_{ij}^{t}b_{ji}^{t}a_{ij}^{t}=0$ for all $t$. Hence this sum reduces to the one from the statement of the lemma.
\end{proof}
It is obvious that
$f(a,b,a) =0$ and so 
$$\varphi(a,b,a)=0,$$ yielding $$\varphi(a,b,c)=-\varphi(c,b,a)$$ for all $a,b,c\in M_d(F)$. In what follows, we will use these two identities  without comment.
\begin{lemma}\label{nova}
We have
\begin{align*}
&\sum_{t=1}^{N} \varphi \left(a^{(t)}, b^{(t)}, c^{(t)} \right)\\ =& 
\sum_{t=1}^{N} 
\sum_{i,j=1}^{d} 
\Bigg( \sum_{l=1}^{d} \sum_{\substack{k=1\\  k\neq j}}^{d} 
a_{ik}^{t} b_{kl}^{t} c_{lj}^{t} 
\varphi(e_{ik},e_{kl},e_{lj})  +
\sum_{\substack{l=1 \\ l\neq i}}^{d} 
a_{ij}^{t} b_{jl}^{t} c_{lj}^t 
\varphi(e_{ij},e_{jl},e_{lj}) \\
 - &\sum_{l=1}^{d} \sum_{\substack{k=1\\
k\neq j}}^{d}c_{ik}^{t}b_{kl}^{t}a_{lj}^{t} \varphi(e_{ik},e_{kl},e_{lj}) 
 -\sum_{\substack{l=1\\ l\neq i}}^{d}c_{ij}^{t} b_{jl}^{t} a_{lj}^{t} \varphi(e_{ij},e_{jl},e_{lj}) \Bigg).
\end{align*}

\end{lemma}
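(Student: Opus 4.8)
The plan is to compute $\sum_t \varphi(a^{(t)}, b^{(t)}, c^{(t)})$ directly by expanding each argument in the basis of matrix units and exploiting multilinearity of $\varphi$. Writing $a^{(t)} = \sum_{i,k} a^t_{ik} e_{ik}$, and similarly for $b^{(t)}$ and $c^{(t)}$, multilinearity gives
\begin{equation*}
\sum_{t=1}^N \varphi\left(a^{(t)}, b^{(t)}, c^{(t)}\right) = \sum_{t=1}^N \sum_{i,k,p,q,l,j} a^t_{ik} b^t_{pq} c^t_{lj}\, \varphi(e_{ik}, e_{pq}, e_{lj}).
\end{equation*}
The first step is to discard the terms that vanish. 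Since $\varphi$ preserves zeros of $f = x_1x_2x_3 - x_3x_2x_1$, whenever the triple product $e_{ik}e_{pq}e_{lj}$ equals the reversed product $e_{lj}e_{pq}e_{ik}$ we will have $\varphi(e_{ik},e_{pq},e_{lj}) = 0$. The point is that $e_{ik}e_{pq}e_{lj} = \delta_{kp}\delta_{ql}e_{ij}$, so unless $k=p$ and $q=l$ the forward product is already $0$; but $f(e_{ik},e_{pq},e_{lj})=0$ requires the reversed product $\delta_{jq}\delta_{pi}e_{lk}$ to agree, and in the generic case both products vanish, forcing $\varphi$ to be zero there as well. Hence only terms with $k=p$ and $q=l$ survive, reducing the six-index sum to
\begin{equation*}
\sum_{t=1}^N \sum_{i,j=1}^d \sum_{k,l=1}^d a^t_{ik} b^t_{kl} c^t_{lj}\, \varphi(e_{ik}, e_{kl}, e_{lj}),
\end{equation*}
matching the shape of the forward-product expansion in Lemma \ref{newhypothesis}.

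The second step is to use the antisymmetry relation $\varphi(a,b,c) = -\varphi(c,b,a)$, which follows from $\varphi(a,b,a)=0$ as noted in the excerpt just before the statement. I would split off the diagonal cases: the term with $k=j$ and $l=i$ is $a^t_{ij} b^t_{ji} c^t_{ij}\,\varphi(e_{ij},e_{ji},e_{ij})$, which vanishes because $\varphi(e_{ij},e_{ji},e_{ij}) = 0$ (it is of the form $\varphi(a,b,a)$). The remaining indices split naturally into the block $k \neq j$ (any $l$) and the block $k=j$ with $l\neq i$, which is exactly the index partition appearing in both Lemma \ref{newhypothesis} and the target identity. This gives the two positive sums $\sum_l \sum_{k\neq j} a^t_{ik}b^t_{kl}c^t_{lj}\,\varphi(e_{ik},e_{kl},e_{lj})$ and $\sum_{l\neq i} a^t_{ij}b^t_{jl}c^t_{lj}\,\varphi(e_{ij},e_{jl},e_{lj})$ stated in the lemma.

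The final step is to produce the negative sums involving $c^t_{ik}b^t_{kl}a^t_{lj}$. These come from rewriting half of the contribution using the antisymmetry $\varphi(e_{ik},e_{kl},e_{lj}) = -\varphi(e_{lj},e_{kl},e_{ik})$ and relabeling the summation indices $(i,k,l,j) \mapsto (j,l,k,i)$, which swaps the roles of the $a$-entries and $c$-entries and turns $\varphi(e_{ik},e_{kl},e_{lj})$ into $-\varphi$ of the reversed unit triple. Averaging the symmetric and relabeled forms converts the single surviving sum into the sum-minus-difference form stated in Lemma \ref{nova}. The main obstacle I anticipate is purely bookkeeping: one must verify carefully which of the products $e_{ik}e_{pq}e_{lj}$ and $e_{lj}e_{pq}e_{ik}$ vanish so as to justify that every off-shell term kills $\varphi$, and then track the index relabeling in the antisymmetrization without dropping or double-counting the boundary cases $k=j$ and $l=i$. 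No deep idea is needed beyond the zero-preservation hypothesis and the antisymmetry of $\varphi$; the content is the correct combinatorial organization of the surviving index ranges.
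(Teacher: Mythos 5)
You expand by multilinearity exactly as the paper does, but your step 2 contains a genuine gap that step 3 cannot repair. The hypothesis forces $\varphi(e_{ik},e_{pq},e_{lj})=0$ only when $f(e_{ik},e_{pq},e_{lj})=0$, i.e.\ when the forward product $e_{ik}e_{pq}e_{lj}=\delta_{kp}\delta_{ql}e_{ij}$ equals the reverse product $e_{lj}e_{pq}e_{ik}=\delta_{jp}\delta_{qi}e_{lk}$ (note the indices: it is $\delta_{jp}\delta_{qi}$, not $\delta_{jq}\delta_{pi}$). Besides the tuples where both products vanish, there are the tuples you dismiss as ``generic'': those with $p=j$ and $q=i$ but not both $p=k$ and $q=l$, where the reverse product is nonzero while the forward one is zero. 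For these, $f(e_{ik},e_{pq},e_{lj})=-e_{lk}\neq 0$, so zero-preservation says nothing about $\varphi$ there, and the corresponding terms must be kept; your claim that only the terms with $k=p$, $q=l$ survive is false. A concrete counterexample within the lemma's setting: take $N=2$ with $\left(a^{(1)},b^{(1)},c^{(1)}\right)=(e_{21},e_{22},e_{12})$ and $\left(a^{(2)},b^{(2)},c^{(2)}\right)=(e_{12},e_{22},e_{21})$, which satisfy (\ref{hh}), and let $\varphi(x,y,z)=\tau(xyz-zyx)$ for a linear functional $\tau$ with $\tau(e_{11})\neq 0$ (this $\varphi$ preserves zeros of $f$). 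Your reduced formula keeps only the forward-chained term of the second triple and returns $\varphi(e_{12},e_{22},e_{21})=\tau(e_{11})\neq 0$, whereas the actual sum is $\varphi(e_{21},e_{22},e_{12})+\varphi(e_{12},e_{22},e_{21})=0$ by antisymmetry. The terms you discarded are precisely what become, after applying $\varphi(x,y,z)=-\varphi(z,y,x)$ and relabeling indices, the two negative sums $-\sum c_{ik}^{t}b_{kl}^{t}a_{lj}^{t}\varphi(e_{ik},e_{kl},e_{lj})$ in the statement: the paper keeps three disjoint families of surviving tuples (the forward-chained ones, plus the reverse-chained ones split according to which junction of the forward chain breaks) and antisymmetrizes only the reverse-chained families.

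Because of this, step 3 is not salvageable as written: antisymmetry applied to your forward sum reverses the arguments of $\varphi$ and flips the sign, but it leaves each coefficient $a_{ik}^{t}b_{kl}^{t}c_{lj}^{t}$ untouched, so no relabeling or ``averaging'' of that sum alone can manufacture coefficients of the form $c_{ik}^{t}b_{kl}^{t}a_{lj}^{t}$. Indeed, if your step-2 identity and the lemma both held, the two negative sums would have to vanish identically, which the example above contradicts (and which is implausible anyway, since by Lemma \ref{corollaryresume} the values $\varphi(e_{ik},e_{kl},e_{lj})$ involved are in general nonzero). The repair is to do the bookkeeping you deferred: keep every tuple for which the forward \emph{or} the reverse product is nonzero, observe that the tuples where both are nonzero contribute $\varphi(e_{ij},e_{ji},e_{ij})=0$, and apply antisymmetry plus relabeling to the reverse-only families. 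That is exactly the paper's proof.
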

\begin{proof}
 Clearly,
\begin{equation} \label{goal}
\begin{aligned}
\sum_{t=1}^{N}\varphi \left( a^{(t)},b^{(t)},c^{(t)} \right) &= 
\sum_{t=1}^{N}\varphi \left( \sum_{i,j=1}^{d}a_{ij}^{t}e_{ij},\sum_{k,l=1}^{d}b_{kl}^{t}e_{kl},\sum_{p,q=1}^{n}c_{pq}^{t}e_{pq} \right) \\
& = \sum_{t=1}^{N}\sum_{i,j,k,l,p,q=1}^{d}a_{ij}^{t}b_{kl}^{t}c_{pq}^{t}\varphi(e_{ij},e_{kl},e_{pq}).
\end{aligned}
\end{equation}
It is easy to check that if  $i,j,k,l,p,q$ satisfy one of the following conditions 
\begin{align*}\nonumber
j\neq k \ &\mbox{and} \ q\neq k,\\\nonumber
j\neq k \ &\mbox{and} \ i\neq l,\\\nonumber
l\neq p \ &\mbox{and} \ q\neq k,\\\nonumber
l\neq p \ &\mbox{and} \ i\neq l,
\end{align*}
then $f(e_{ij},e_{kl},e_{pq})=0$ and so $\varphi(e_{ij},e_{kl},e_{pq})=0$.
Hence we may assume that the following relations hold:
\begin{align*}
j=k \ &\mbox{or} \ q=k,\\\nonumber
j=k \ &\mbox{or} \ i=l,\\\nonumber
l=p \ &\mbox{or} \ q=k,\\\nonumber
l=p \ &\mbox{or} \ i=l.
\end{align*}
We can rewrite  (\ref{goal}) as
\begin{eqnarray*}
\begin{aligned}
\sum_{t=1}^{N}\varphi \left( a^{(t)}, b^{(t)}, c^{(t)} \right)  &= 
\sum_{t=1}^{N}\sum_{i,j,l,q=1}^{d}a_{ij}^{t}b_{jl}^{t}c_{lq}^{t}\varphi(e_{ij},e_{jl},e_{lq}) \\ & \ \ \ + \sum_{t=1}^{N}\sum_{i,j,p=1}^{d}\sum_{\substack{k=1 \\
k\neq j}}^{d}a_{ij}^{t}b_{ki}^{t}c_{pk}^{t}\varphi(e_{ij},e_{ki},e_{pk}) \\
& \ \ \ + \sum_{t=1}^{N}\sum_{i,j=1}^{d}\sum_{\substack{p=1\\
p\neq i}}^{d}a_{ij}^{t}b_{ji}^{t}c_{pj}^{t}\varphi(e_{ij},e_{ji},e_{pj}).
\end{aligned}
\end{eqnarray*}
 Hence,
\begin{equation*}
\begin{split}
\sum_{t=1}^{N}\varphi \left( a^{(t)}, b^{(t)}, c^{(t)} \right)
& = \sum_{t=1}^{N} \sum_{i,j=1}^{d} \Bigg( \sum_{k,l=1}^{d}a_{ik}^{t}b_{kl}^{t}c_{lj}^{t}\varphi(e_{ik},e_{kl},e_{lj}) \\ 
& \ \ \ -\sum_{l=1}^{d}\sum_{\substack{k=1\\
k\neq j}}^{d}c_{ik}^{t}b_{kl}^{t}a_{lj}^{t}\varphi(e_{ik},e_{kl},e_{lj}) \\
& \ \ \  -\sum_{\substack{l=1\\ l\neq i}}^{d}c_{ij}^{t}b_{jl}^{t}a_{lj}^{t}\varphi(e_{ij},e_{jl},e_{lj}) \Bigg).
\end{split}
\end{equation*}
Finally, using $\varphi(e_{ij},e_{ji},e_{ij})=0$ we obtain the statement of the lemma.
\end{proof}

\begin{lemma}\label{fi1}
If  $u\neq l, i$, $l\neq i$, and $j\neq k$, then 
$$\varphi(e_{ik},e_{kl},e_{lj})=\varphi(e_{ik},e_{ku},e_{uj}).$$
\end{lemma}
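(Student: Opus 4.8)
The statement asserts that the value $\varphi(e_{ik},e_{kl},e_{lj})$ does not depend on the ``middle'' index $l$ (subject to the constraints $u\neq l,i$, $l\neq i$, $j\neq k$), in the sense that replacing $l$ by an admissible $u$ leaves the value unchanged. The natural strategy is to exhibit a matrix triple that is a genuine zero of $f$ and expand the resulting relation $\varphi(\cdots)=0$ multilinearly, so that the two terms $\varphi(e_{ik},e_{kl},e_{lj})$ and $\varphi(e_{ik},e_{ku},e_{uj})$ appear with opposite signs and everything else cancels. Concretely, I would look for matrices $a,b,c$ built from a small number of matrix units such that $abc-cba=0$ holds identically, yet whose trilinear expansion, after discarding the many vanishing terms via the index-pattern analysis already used in Lemma \ref{nova}, collapses to exactly $\varphi(e_{ik},e_{kl},e_{lj})-\varphi(e_{ik},e_{ku},e_{uj})=0$.

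The plan is to take $a=e_{ik}$ fixed, and to combine matrix units in the second and third slots in a way that forces the telescoping. A promising choice is $b=e_{kl}+e_{ku}$ and $c=e_{lj}+e_{uj}$ (or a sign-twisted variant), and then compute $f(e_{ik},\,e_{kl}+e_{ku},\,e_{lj}+e_{uj})$. First I would verify that this product $abc$ lands only in row $i$, column $j$, while $cba$ vanishes because the column/row indices of $c$ and $a$ fail to match (here the hypotheses $l\neq i$, $u\neq i$, $j\neq k$ are exactly what kills the $cba$ side and the unwanted cross terms). If the genuine evaluation $f(a,b,c)$ is not identically zero for this naive choice, I would instead feed $a,b,c$ through an auxiliary zero of $f$ of the form $f(e_{ik}, x, y)$ where $x,y$ are chosen so that $e_{ik}xy$ telescopes: the point is that $e_{ik}\,e_{kl}\,e_{lj}=e_{ij}=e_{ik}\,e_{ku}\,e_{uj}$, so the two routes through the middle index $l$ versus $u$ produce the \emph{same} matrix $e_{ij}$, and their difference is a bona fide zero of $f$.

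So the cleanest route is to set $a=e_{ik}$, $b=e_{kl}-e_{ku}$, $c=e_{lj}+e_{uj}$ and check directly that $abc-cba=0$: the product $abc=e_{ik}(e_{kl}-e_{ku})(e_{lj}+e_{uj})=e_{il}e_{lj}-e_{iu}e_{uj}=e_{ij}-e_{ij}=0$ (using $l\neq u$ to kill the cross terms $e_{il}e_{uj}$ and $e_{iu}e_{lj}$), while $cba=(e_{lj}+e_{uj})(e_{kl}-e_{ku})e_{ik}=0$ since the column index $j$ of $c$ never equals the row indices $k$ of $b$ (as $j\neq k$). Thus $f(a,b,c)=0$, hence $\varphi(a,b,c)=0$, and multilinear expansion gives
\begin{equation*}
\varphi(e_{ik},e_{kl},e_{lj})-\varphi(e_{ik},e_{ku},e_{uj})+\varphi(e_{ik},e_{kl},e_{uj})-\varphi(e_{ik},e_{ku},e_{lj})=0,
\end{equation*}
so I must separately show the two mixed terms $\varphi(e_{ik},e_{kl},e_{uj})$ and $\varphi(e_{ik},e_{ku},e_{lj})$ each vanish; since $l\neq u$, the index patterns in these terms violate the conditions that were shown in Lemma \ref{nova} to force $\varphi(e_{ij},e_{kl},e_{pq})=0$ (here the middle and outer indices no longer chain correctly because $f(e_{ik},e_{kl},e_{uj})=e_{ik}e_{kl}e_{uj}-e_{uj}e_{kl}e_{ik}=0$ as $l\neq u$), yielding the claim.

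The main obstacle I anticipate is bookkeeping the degenerate index collisions: the argument above tacitly assumes all four indices $i,j,k,l,u$ are in ``general position,'' and I must confirm that the only equalities permitted by the hypotheses ($u\neq l$, $u\neq i$, $l\neq i$, $j\neq k$) are harmless, while checking that potentially dangerous coincidences (such as $i=j$, $i=u$ already excluded, $k=l$, $k=u$, or $j=l$) either cannot arise or do not spoil the telescoping. In particular one should double-check that when some indices coincide the auxiliary matrices $a,b,c$ are still chosen so that $abc-cba=0$ remains an \emph{identity} and that no surviving term in the expansion is the companion term $\varphi(e_{ik},e_{ku},e_{uj})$ in disguise; this is exactly the kind of case analysis that Lemma \ref{nova} streamlined, so I would lean on the same vanishing criterion rather than re-deriving it.
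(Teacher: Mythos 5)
Your proof is correct, but it follows a genuinely different route from the paper's. You keep the first slot frozen at $e_{ik}$ and build the telescoping zero $f(e_{ik},\,e_{kl}-e_{ku},\,e_{lj}+e_{uj})=0$, in which both $abc$ and $cba$ vanish identically (the former because $l\neq u$ kills the cross terms, the latter because $j\neq k$); expanding by multilinearity leaves four terms, and the two mixed ones, $\varphi(e_{ik},e_{kl},e_{uj})$ and $\varphi(e_{ik},e_{ku},e_{lj})$, are themselves evaluations at zeros of $f$ (again using only $l\neq u$ and $j\neq k$), so they vanish by the hypothesis \eqref{hypothesis}, giving the identity directly. The paper instead uses the zero $f(e_{ik}+e_{uj},\,e_{kl}+e_{ku},\,e_{lj}+e_{ik})=0$, where $abc=cba=e_{ij}\neq 0$, expands into eight terms of which six vanish, arrives at $\varphi(e_{ik},e_{kl},e_{lj})+\varphi(e_{uj},e_{ku},e_{ik})=0$, and then invokes the antisymmetry $\varphi(a,b,c)=-\varphi(c,b,a)$ (derived from $f(a,b,a)=0$) to convert the second term. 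Your argument buys a small economy: it never needs the antisymmetry identity and tracks fewer terms; the paper's version buys uniformity, since the same two-units-per-slot template plus antisymmetry is recycled across Lemmas \ref{fi1}--\ref{fi5}. Two remarks: your opening choice $b=e_{kl}+e_{ku}$, $c=e_{lj}+e_{uj}$ indeed fails (there $abc=2e_{ij}\neq 0$), so the sign twist in the second slot is essential, and you correctly discarded it; and your worry about degenerate index coincidences is unnecessary --- every product computation in your construction uses only the inequalities $l\neq u$ and $j\neq k$, both of which are hypotheses, and the matrix-unit rule $e_{ab}e_{cd}=\delta_{bc}e_{ad}$ holds uniformly whatever other coincidences occur, so no case analysis is needed.
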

\begin{proof}
Note that $$f(e_{ik}+e_{uj},e_{kl}+e_{ku},e_{lj}+e_{ik})=0.$$ Therefore,
\begin{align*}
0 
&= \varphi(e_{ik}+e_{uj},e_{kl}+e_{ku},e_{lj}+e_{ik}) \\
&= \varphi(e_{ik},e_{kl},e_{lj})+\varphi(e_{ik},e_{kl},e_{ik}) +\varphi(e_{ik},e_{ku},e_{lj})  \\ & \ \ \ +\varphi(e_{ik},e_{ku},e_{ik}) 
+ \varphi(e_{uj},e_{kl},e_{lj}) +\varphi(e_{uj},e_{kl},e_{ik}) \\ & \ \ \ +\varphi(e_{uj},e_{ku},e_{lj})+\varphi(e_{uj},e_{ku},e_{ik}) \\
&= \varphi(e_{ik},e_{kl},e_{lj})+\varphi(e_{uj},e_{ku},e_{ik}). \qedhere
\end{align*}
\end{proof}

\begin{lemma}\label{fi2}
If  $l\neq i$ and $k\neq j$, then $$\varphi(e_{ij},e_{jl},e_{lj})=\varphi(e_{ik},e_{kl},e_{lj}).$$
\end{lemma}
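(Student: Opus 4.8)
The plan is to establish the identity $\varphi(e_{ij},e_{jl},e_{lj})=\varphi(e_{ik},e_{kl},e_{lj})$ by the same ``test matrix'' technique already used in Lemma \ref{fi1}, namely by choosing a carefully constructed triple of matrices on which $f$ vanishes and then expanding $\varphi$ by multilinearity. Under the hypotheses $l\neq i$ and $k\neq j$, I want to find matrices $A,B,C$ built from a few standard matrix units such that $f(A,B,C)=ABC-CBA=0$, so that $\varphi(A,B,C)=0$, and such that upon expanding all the terms cancel except the two I care about (possibly after also invoking the earlier lemmas and the antisymmetry $\varphi(a,b,c)=-\varphi(c,b,a)$ together with $\varphi(a,b,a)=0$).

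The key computational step is the right choice of test matrices. Guided by the pattern in Lemma \ref{fi1}, I would try something of the form $A=e_{ij}+e_{ik}$, $B=e_{jl}+e_{kl}$, $C=e_{lj}$, or a symmetric variant, and verify directly that $f(A,B,C)=0$. Expanding $\varphi(A,B,C)$ by trilinearity gives four terms
\begin{equation*}
\varphi(e_{ij},e_{jl},e_{lj})+\varphi(e_{ij},e_{kl},e_{lj})+\varphi(e_{ik},e_{jl},e_{lj})+\varphi(e_{ik},e_{kl},e_{lj}),
\end{equation*}
and I would argue that the two ``mismatched'' middle terms $\varphi(e_{ij},e_{kl},e_{lj})$ and $\varphi(e_{ik},e_{jl},e_{lj})$ individually vanish because their defining matrix units fail to compose (the relevant row/column indices do not match, so the corresponding evaluation of $f$ on these basis triples is already zero, forcing $\varphi$ to vanish there by the hypothesis). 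That would leave exactly $\varphi(e_{ij},e_{jl},e_{lj})+\varphi(e_{ik},e_{kl},e_{lj})$, and combined with the sign conventions I expect to recover the claimed equality; if a sign intervenes I would adjust the test matrices so that the surviving terms appear with the same sign.

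The main obstacle I anticipate is getting the index bookkeeping exactly right: I must ensure that my chosen test matrices genuinely satisfy $f=0$ (this requires checking which products $e_{pq}e_{rs}$ survive, i.e.\ when $q=r$), and that the unwanted cross terms really do vanish under the stated constraints $l\neq i$ and $k\neq j$ (and not under some additional hidden hypothesis). In the degenerate subcases where some of $i,j,k,l$ coincide beyond what is forbidden, the vanishing of individual $\varphi$-terms may have to be justified separately, so I would enumerate these coincidences carefully. If a single test triple does not isolate the two desired terms cleanly, I would instead prove the identity in two moves, first relating both sides to a common intermediate value $\varphi(e_{ik},e_{kl},e_{lj})$ via an auxiliary lemma-style computation, much as Lemma \ref{fi1} transports the middle index.
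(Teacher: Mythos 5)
Your overall strategy (test matrices, multilinearity, vanishing cross terms, antisymmetry) is the paper's strategy, but your concrete execution has a genuine flaw, and it is not a bookkeeping issue. For your proposed triple $A=e_{ij}+e_{ik}$, $B=e_{jl}+e_{kl}$, $C=e_{lj}$ one computes $AB=2e_{il}$, so $ABC=2e_{ij}$, while $CBA=e_{ll}(e_{ij}+e_{ik})=0$ since $l\neq i$; hence $f(A,B,C)=2e_{ij}\neq 0$ (outside characteristic $2$), and the hypothesis on $\varphi$ cannot be invoked at all. Moreover, this failure is structural, not fixable by shuffling indices: both target triples satisfy $f(e_{ij},e_{jl},e_{lj})=e_{ij}$ and $f(e_{ik},e_{kl},e_{lj})=e_{ij}$, so in any multilinear expansion where these two terms survive \emph{with the same sign} and all cross terms are $f$-zeros (your cross-term analysis is correct, by the way), the evaluation of $f$ is forced to be $2e_{ij}\neq 0$. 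In other words, the configuration you describe --- same-sign survival of the two desired terms plus $f(A,B,C)=0$ --- is self-contradictory, so the ``adjustment'' you defer to is not a minor repair but the actual crux of the proof.

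The paper resolves this by making one of the two terms appear in \emph{reversed} orientation: it takes $A=e_{ij}+e_{lj}$, $B=e_{jl}+e_{kl}$, $C=e_{lj}+e_{ik}$, so that the surviving terms are $\varphi(e_{ij},e_{jl},e_{lj})$ and $\varphi(e_{lj},e_{kl},e_{ik})$, whose $f$-contributions are $e_{ij}$ and $-e_{ij}$ and hence cancel, giving $f(A,B,C)=0$ legitimately; the identity $\varphi(e_{ij},e_{jl},e_{lj})+\varphi(e_{lj},e_{kl},e_{ik})=0$ then becomes the claim via $\varphi(c,b,a)=-\varphi(a,b,c)$. An equally valid repair of your version is a sign twist in one slot, e.g.\ $A=e_{ij}+e_{ik}$, $B=e_{jl}-e_{kl}$, $C=e_{lj}$: then $f(A,B,C)=0$, the cross terms still vanish, and $\varphi(A,B,C)=0$ reads $\varphi(e_{ij},e_{jl},e_{lj})-\varphi(e_{ik},e_{kl},e_{lj})=0$, which is the lemma. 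Either way, the key idea missing from your proposal is that the two desired terms must enter the expansion with \emph{opposite} $f$-contributions, and the sign is then restored either by antisymmetry of $\varphi$ or by the built-in minus sign.
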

\begin{proof}
Note that
$$
f(e_{ij}+e_{lj},e_{jl}+e_{kl},e_{lj}+e_{ik})=0.
$$
Therefore,
\begin{align*}
0 &= \varphi(e_{ij}+e_{lj},e_{jl}+e_{kl},e_{lj}+e_{ik}) \\
&= \varphi(e_{ij},e_{jl},e_{lj})+\varphi(e_{ij},e_{jl},e_{ik}) +\varphi(e_{ij},e_{kl},e_{lj}) \\ & \ \ \ + \varphi(e_{ij},e_{kl},e_{ik})  
 + \varphi(e_{lj},e_{jl},e_{lj})+\varphi(e_{lj},e_{jl},e_{ik}) \\ & \ \ \ +\varphi(e_{lj},e_{kl},e_{lj})+\varphi(e_{lj},e_{kl},e_{ik}) \\
&= \varphi(e_{ij},e_{jl},e_{lj})+\varphi(e_{lj},e_{kl},e_{ik}).\qedhere
\end{align*}
\end{proof}

\begin{lemma}\label{fi3}
If $k\neq i$ and $k\neq j$, then $$\varphi(e_{ik},e_{ki},e_{ij})=\varphi(e_{ik},e_{kk},e_{kj}).$$
\end{lemma}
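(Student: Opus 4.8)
Looking at Lemma \ref{fi3}, I need to prove that $\varphi(e_{ik},e_{ki},e_{ij})=\varphi(e_{ik},e_{kk},e_{kj})$ when $k\neq i$ and $k\neq j$.

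Let me understand the pattern from the previous lemmas. The proofs of Lemmas \ref{fi1} and \ref{fi2} follow a template: find a zero $(A,B,C)$ of $f$ where each of $A,B,C$ is a sum of two matrix units, expand $\varphi(A,B,C)$ by trilinearity into eight terms, and observe that six of them vanish (because each involves a triple that is itself a zero of $f$, giving $\varphi=0$), leaving the desired relation between the two surviving terms.

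**My plan.** I want to construct matrices $A = e_{ik} + ?$, $B = e_{ki} + ?$, $C = e_{ij} + ?$ as sums of matrix units so that $f(A,B,C)=0$ and so that, upon expanding $\varphi(A,B,C)=0$ by trilinearity, exactly two surviving terms remain: $\varphi(e_{ik},e_{ki},e_{ij})$ and (up to the antisymmetry $\varphi(a,b,c)=-\varphi(c,b,a)$ and the vanishing $\varphi(a,b,a)=0$) a term equal to $\pm\varphi(e_{ik},e_{kk},e_{kj})$.

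**Key steps.** First I would look for the right test element. Since the target replaces the ``$e_{ki}$ in slot 2, $e_{ij}$ in slot 3'' configuration with ``$e_{kk}$ in slot 2, $e_{kj}$ in slot 3'', a natural guess mirroring Lemma \ref{fi2} is to take
\[
A = e_{ik} + e_{kj},\quad B = e_{ki} + e_{kk},\quad C = e_{ij} + e_{kj},
\]
or a close variant; I would verify directly that $ABC = CBA$ by computing both products (each is a sum of products of matrix units $e_{pq}e_{rs}=\delta_{qr}e_{ps}$). Second, I would expand $\varphi(A,B,C)=0$ into its eight trilinear terms and classify each: those of the form $\varphi(x,y,x)$ vanish immediately, and for the remaining mixed terms I would check whether the triple is a zero of $f$ (hence $\varphi=0$) or whether it matches one of the two target terms. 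I would use the hypotheses $k\neq i$ and $k\neq j$ to rule out degenerate coincidences among indices, and the antisymmetry identity to align the sign. After collecting surviving terms, I expect exactly $\varphi(e_{ik},e_{ki},e_{ij})$ and a term equal to $\varphi(e_{ik},e_{kk},e_{kj})$ (possibly after applying antisymmetry to a term like $\varphi(e_{kj},e_{kk},e_{ik})$), yielding the claimed equality.

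**Main obstacle.** The delicate part is choosing the second summands in $A$, $B$, $C$ so that (i) the augmented triple is genuinely a zero of $f$, and (ii) among the six cross terms, each is either of the self-symmetric type $\varphi(x,y,x)$ or a verified $f$-zero, with no unwanted leftover term. This bookkeeping is finicky because the index constraints here ($k\neq i$, $k\neq j$, and implicitly the relations among $i,j,k$) are weaker than in Lemmas \ref{fi1}--\ref{fi2}; in particular one must be careful that $i$ and $j$ may coincide or not, so I would check whether the argument needs a case split on $i = j$ versus $i\neq j$. If the symmetric ansatz above produces a stray term, I would adjust the choice of matrix units (for instance replacing a summand by a different unit sharing the required row/column indices) until the expansion cleanly collapses to the two desired terms, exactly as in the preceding two lemmas.
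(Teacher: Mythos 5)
Your overall strategy is exactly the paper's: produce a zero $(A,B,C)$ of $f$ in which each entry is a sum of two matrix units, expand $\varphi(A,B,C)=0$ by trilinearity, kill six of the eight terms, and finish with the antisymmetry $\varphi(a,b,c)=-\varphi(c,b,a)$. The problem is that your concrete candidate fails, and finding the correct triple is the entire content of this lemma. With your choice $A = e_{ik}+e_{kj}$, $B = e_{ki}+e_{kk}$, $C = e_{ij}+e_{kj}$, one computes $AB = e_{ii}+e_{ik}$, hence $ABC = e_{ii}e_{ij} + e_{ik}e_{kj} = 2e_{ij}$, while $CB = (e_{ij}+e_{kj})(e_{ki}+e_{kk}) = 0$ because every one of the four products carries a factor $\delta_{jk}=0$. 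Thus $f(A,B,C) = 2e_{ij}$, which is nonzero (this subsection makes no assumption on ${\rm char}(F)$, so you cannot appeal to characteristic $2$). Your test triple is therefore not a zero of $f$, and the subsequent expansion yields nothing.

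The repair is to change the second summand of the \emph{third} entry from $e_{kj}$ to $e_{ik}$: the paper uses $f(e_{ik}+e_{kj},\, e_{ki}+e_{kk},\, e_{ij}+e_{ik}) = 0$, which one verifies directly ($ABC = CBA = e_{ij}+e_{ik}$, using only $k\neq i$ and $k\neq j$). In the resulting expansion, $\varphi(e_{ik},e_{ki},e_{ik})$ and $\varphi(e_{ik},e_{kk},e_{ik})$ vanish since $\varphi(a,b,a)=0$; the triples $(e_{ik},e_{kk},e_{ij})$, $(e_{kj},e_{ki},e_{ij})$, $(e_{kj},e_{ki},e_{ik})$, $(e_{kj},e_{kk},e_{ij})$ are all zeros of $f$ (each relevant product of matrix units dies on $\delta_{jk}$ or $\delta_{ik}$), so $\varphi$ vanishes on them; what survives is $\varphi(e_{ik},e_{ki},e_{ij})+\varphi(e_{kj},e_{kk},e_{ik})=0$, and antisymmetry turns the second term into $-\varphi(e_{ik},e_{kk},e_{kj})$, giving the claim. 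Incidentally, your worry about a case split on $i=j$ is unnecessary: this computation uses only $k\neq i$ and $k\neq j$, so it covers $i=j$ uniformly (and that case is genuinely needed later, in Lemma \ref{corollaryresume}). Since your proposal defers exactly the step that matters --- ``adjust the choice of matrix units until the expansion cleanly collapses'' --- it is a search plan rather than a proof, and the specific instance you propose to start the search from does not work.
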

\begin{proof}
Note that
$$
f(e_{ik}+e_{kj},e_{ki}+e_{kk},e_{ij}+e_{ik})=0.
$$
Therefore,
\begin{align*}
0 &= \varphi(e_{ik}+e_{kj},e_{ki}+e_{kk},e_{ij}+e_{ik}) \\
&= \varphi(e_{kk},e_{ki},e_{ij})+\varphi(e_{ik},e_{ki},e_{ik})+\varphi(e_{ik},e_{kk},e_{ij}) \\ & \ \ \ +\varphi(e_{ik},e_{kk},e_{ik})
+ \varphi(e_{kj},e_{ki},e_{ij})+\varphi(e_{kj},e_{ki},e_{ik}) \\ & \ \ \ +\varphi(e_{kj},e_{kk},e_{ij})+\varphi(e_{kj},e_{kk},e_{ik}) \\
&= \varphi(e_{ik},e_{ki},e_{ij})+\varphi(e_{kj},e_{kk},e_{ik}).
\qedhere\end{align*}
\end{proof}

\begin{lemma}\label{fi4}
    If $k\neq i$, then
$$\varphi(e_{ii},e_{ik},e_{ki})=\varphi(e_{ik},e_{kk},e_{ki}).$$
\end{lemma}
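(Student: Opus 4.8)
The plan is to reuse the mechanism of Lemmas \ref{fi1}--\ref{fi3}: exhibit a single zero of $f$ in which each of the three slots is a sum of two matrix units, feed it into (\ref{hypothesis}) to get a vanishing of $\varphi$, expand by trilinearity into eight summands, and arrange the augmentation so that six of the eight terms die and the remaining two collapse—via the antisymmetry $\varphi(a,b,c)=-\varphi(c,b,a)$—to the asserted identity. The whole difficulty lies in choosing the correct second matrix unit to attach in each slot.

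Concretely, I would show that
\[
f\left(e_{ii}+e_{ki},\, e_{ik}+e_{kk},\, e_{ki}+e_{ik}\right)=0.
\]
Writing $X=e_{ii}+e_{ki}$, $Y=e_{ik}+e_{kk}$, $Z=e_{ki}+e_{ik}$ and using $k\neq i$ together with $e_{ab}e_{cd}=\delta_{bc}e_{ad}$, one computes $XY=e_{ik}+e_{kk}$ and hence $XYZ=e_{ii}+e_{ki}=X$; symmetrically $ZY=e_{kk}+e_{ik}$ and $ZYX=e_{ii}+e_{ki}=X$, so $f(X,Y,Z)=XYZ-ZYX=0$. By (\ref{hypothesis}) this yields $\varphi(X,Y,Z)=0$.

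Expanding $\varphi(X,Y,Z)$ into its eight summands, two of them, $\varphi(e_{ki},e_{ik},e_{ki})$ and $\varphi(e_{ki},e_{kk},e_{ki})$, vanish because they have the form $\varphi(a,b,a)$. The other four unwanted terms, namely $\varphi(e_{ii},e_{ik},e_{ik})$, $\varphi(e_{ii},e_{kk},e_{ki})$, $\varphi(e_{ii},e_{kk},e_{ik})$ and $\varphi(e_{ki},e_{ik},e_{ik})$, each vanish because a one-line matrix-unit computation shows that $f$ already annihilates the corresponding triple, so $\varphi$ does too by (\ref{hypothesis}). What remains is
\[
0=\varphi(e_{ii},e_{ik},e_{ki})+\varphi(e_{ki},e_{kk},e_{ik}),
\]
and since $\varphi(e_{ki},e_{kk},e_{ik})=-\varphi(e_{ik},e_{kk},e_{ki})$ by antisymmetry, this is exactly $\varphi(e_{ii},e_{ik},e_{ki})=\varphi(e_{ik},e_{kk},e_{ki})$.

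The main obstacle is the very first step: finding the right augmentation. The three constraints are rigid—the augmented triple must be a zero of $f$, the six cross terms must collapse (either through (\ref{hypothesis}) or through $\varphi(a,b,a)=0$), and the single surviving \emph{foreign} term must be precisely the antisymmetric partner $\varphi(e_{ki},e_{kk},e_{ik})$ of the target $\varphi(e_{ik},e_{kk},e_{ki})$. Once the augmentation $X=e_{ii}+e_{ki}$, $Y=e_{ik}+e_{kk}$, $Z=e_{ki}+e_{ik}$ is located, the remaining verifications are routine evaluations of $e_{ab}e_{cd}=\delta_{bc}e_{ad}$.
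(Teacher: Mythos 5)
Your proposal is correct and is essentially identical to the paper's own proof: the paper uses exactly the same zero $f(e_{ii}+e_{ki},\,e_{ik}+e_{kk},\,e_{ki}+e_{ik})=0$, expands $\varphi$ into the same eight terms, kills the six unwanted ones by the same two mechanisms, and finishes with the antisymmetry $\varphi(a,b,c)=-\varphi(c,b,a)$.
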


\begin{proof}
    Note that 
    $$f(e_{ii}+e_{ki},e_{ik}+e_{kk},e_{ki}+e_{ik})=0$$ and therefore
    \begin{align*}
0 &= \varphi(e_{ii}+e_{ki},e_{ik}+e_{kk},e_{ki}+e_{ik}) \\
&= \varphi(e_{ii},e_{ik},e_{ki})+\varphi(e_{ii},e_{ik},e_{ik})+\varphi(e_{ii},e_{kk},e_{ki}) \\ & \ \ \ +\varphi(e_{ii},e_{kk},e_{ik})
+ \varphi(e_{ki},e_{ik},e_{ki})+\varphi(e_{ki},e_{ik},e_{ik}) \\ & \ \ \ +\varphi(e_{ki},e_{kk},e_{ki})+\varphi(e_{ki},e_{kk},e_{ik}) \\
&= \varphi(e_{ii},e_{ik},e_{ki})+\varphi(e_{ki},e_{kk},e_{ik}).
\qedhere\end{align*}
\end{proof}

\begin{lemma}\label{fi5} We have 
$$\varphi(e_{ii},e_{ij},e_{jj})=\varphi(e_{ii},e_{ii},e_{ij})=\varphi(e_{ij},e_{jj},e_{jj}).$$
\end{lemma}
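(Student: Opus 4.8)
The plan is to prove the two claimed equalities one at a time, in each case imitating the mechanism already used in Lemmas \ref{fi1}--\ref{fi4}: choose three elements of $M_d(F)$, each a sum of two matrix units, whose generalized commutator vanishes, expand $\varphi$ by trilinearity into a sum of $\varphi$ evaluated on triples of matrix units, and discard every summand whose triple $(e_{pq},e_{rs},e_{tu})$ satisfies $f(e_{pq},e_{rs},e_{tu})=0$ (and is therefore annihilated by $\varphi$). Before doing so I would dispose of the degenerate case $i=j$, where all three entries collapse to $e_{ii}$ and every expression equals $\varphi(e_{ii},e_{ii},e_{ii})=0$; thus I may assume $i\neq j$. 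Throughout I would freely use the antisymmetry $\varphi(a,b,c)=-\varphi(c,b,a)$ recorded just before Lemma \ref{nova}.

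For the first equality $\varphi(e_{ii},e_{ij},e_{jj})=\varphi(e_{ii},e_{ii},e_{ij})$, I would test the triple
\[
f\bigl(e_{ii}+e_{ij},\,e_{ii}+e_{ij},\,e_{ii}+e_{jj}\bigr)=0,
\]
which one checks directly, since both $(e_{ii}+e_{ij})(e_{ii}+e_{ij})(e_{ii}+e_{jj})$ and its reverse equal $e_{ii}+e_{ij}$. Expanding $\varphi$ of this triple into its eight summands and using $f=0$ on each matrix-unit triple, all but two summands vanish, leaving
\[
\varphi(e_{ii},e_{ij},e_{jj})+\varphi(e_{ij},e_{ii},e_{ii})=0.
\]
Since $\varphi(e_{ij},e_{ii},e_{ii})=-\varphi(e_{ii},e_{ii},e_{ij})$ by antisymmetry, the first equality follows.

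For the second equality $\varphi(e_{ii},e_{ii},e_{ij})=\varphi(e_{ij},e_{jj},e_{jj})$, I would instead use
\[
f\bigl(e_{ij},\,e_{ii}+e_{jj},\,e_{ii}+e_{jj}\bigr)=0
\]
(both $e_{ij}(e_{ii}+e_{jj})(e_{ii}+e_{jj})$ and its reverse equal $e_{ij}$). Expanding $\varphi(e_{ij},e_{ii}+e_{jj},e_{ii}+e_{jj})$ into four summands and discarding the two that vanish leaves
\[
\varphi(e_{ij},e_{ii},e_{ii})+\varphi(e_{ij},e_{jj},e_{jj})=0,
\]
and the antisymmetry $\varphi(e_{ij},e_{ii},e_{ii})=-\varphi(e_{ii},e_{ii},e_{ij})$ then yields the claim.

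The only nonroutine part is guessing the two test triples and confirming the vanishing pattern; once the elements above are in hand, verifying $f=0$ and identifying which of the $\varphi$-summands survive is a short matrix-unit computation of exactly the same flavour as in the preceding lemmas. I expect no genuine obstacle beyond this bookkeeping.
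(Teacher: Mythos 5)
Your proof is correct: both test triples do satisfy $f=0$ (for the first, both products equal $e_{ii}+e_{ij}$; for the second, both equal $e_{ij}$), the claimed vanishing patterns check out, and the antisymmetry step closes each equality. This is essentially the same approach as the paper's — the paper uses the single zero triple $(e_{ii}+e_{jj},\,e_{ii}+e_{ij},\,e_{ij}+e_{ii})$ and declares the second equality "analogous," while you chose slightly different (and, for the second equality, more economical) test triples, but the mechanism is identical.
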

\begin{proof}
Note that
$$
f(e_{ii}+e_{jj},e_{ii}+e_{ij},e_{ij}+e_{ii})=0.$$
Therefore,
\begin{align*}
0 &= \varphi(e_{ii}+e_{jj},e_{ii}+e_{ij},e_{ij}+e_{ii}) \\
& = \varphi(e_{ii},e_{ii},e_{ij})+\varphi(e_{ii},e_{ii},e_{ii})+\varphi(e_{ii},e_{ij},e_{ij}) \\ & \ \ \ +\varphi(e_{ii},e_{ij},e_{ii})  + \varphi(e_{jj},e_{ii},e_{ij})+\varphi(e_{jj},e_{ii},e_{ii}) \\ & \ \ \ +\varphi(e_{jj},e_{ij},e_{ij})+\varphi(e_{jj},e_{ij},e_{ii}) \\
&= \varphi(e_{ii},e_{ii},e_{ij})+\varphi(e_{jj},e_{ij},e_{ii}).
\end{align*}
The second equality can be obtained analogously.\qedhere
\end{proof}

The next lemma gathers together all the needed information from the previous lemmas.

\begin{lemma}\label{corollaryresume}
    For all $i,j\in\{1,\dots,n\}$,
    the set $$\Phi_{ij}=\{\varphi(e_{ik},e_{kl},e_{lj})\,|\,k,l=1\dots,n,\, k\ne j\mbox{ or } l\ne i\}$$ is a singleton. 
\end{lemma}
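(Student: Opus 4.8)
The plan is to view $\Phi_{ij}$ through the values $v(k,l):=\varphi(e_{ik},e_{kl},e_{lj})$, indexed by the pairs $(k,l)$ appearing in its definition. The condition ``$k\ne j$ or $l\ne i$'' excludes exactly the single pair $(j,i)$, so $\Phi_{ij}$ is the set of all $v(k,l)$ with $(k,l)\ne(j,i)$. I form a graph whose vertices are these admissible pairs and whose edges are the equalities supplied by Lemmas \ref{fi1}--\ref{fi5}; once this graph is shown to be connected, all the values $v(k,l)$ coincide and $\Phi_{ij}$ is a singleton, as claimed. The cases $i\ne j$ and $i=j$ are organised separately, since they differ in which pair is excluded and in which lemmas apply.

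I first treat the ``bulk'' pairs with $l\ne i$. For a fixed such $l$, Lemma \ref{fi2} gives $v(j,l)=v(k,l)$ for every admissible $k$, so the whole ``column'' determined by $l$ carries one value. Lemma \ref{fi1} then connects the columns: for a fixed $k\ne j$ it yields $v(k,l)=v(k,u)$ for all $l,u\ne i$, forcing the column values to agree. Hence every pair with $l\ne i$ belongs to a single class; denote the common value by $V$.

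It remains to attach the pairs with $l=i$, i.e.\ the $(k,i)$ with $k\ne j$. When $k\ne i$ (and $k\ne j$), Lemma \ref{fi3} gives $v(k,i)=v(k,k)$, and $(k,k)$ is a bulk pair, so $v(k,i)=V$. The only pair not yet reached is the diagonal one $(i,i)$, which is admissible precisely when $i\ne j$; in that situation Lemma \ref{fi5} gives $v(i,i)=v(i,j)=v(j,j)$ with $(i,j)$ and $(j,j)$ both bulk pairs, so $v(i,i)=V$ as well. When $i=j$ the pair $(i,i)$ is exactly the excluded one, so there is no diagonal pair to handle, and Lemma \ref{fi3} (whose hypotheses $k\ne i$ and $k\ne j$ now coincide) already covers every admissible $(k,i)$.

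I expect the main obstacle to be the combinatorial bookkeeping rather than any new idea: one must verify that the index restrictions in Lemmas \ref{fi1}--\ref{fi5} are met along each connecting chain, and in particular handle the low-dimensional degeneracies. When $d=2$ there is no index $u\ne l,i$, so Lemma \ref{fi1} is vacuous and the cross-column step disappears; here there is only one bulk column, and the few admissible values are identified directly by the ``diagonal'' Lemmas \ref{fi3}, \ref{fi4} and \ref{fi5}. Checking that these boundary lemmas cover precisely the pairs left untouched by Lemmas \ref{fi1} and \ref{fi2} in every dimension is the only delicate point; the rest is a routine chase through the displayed identities.
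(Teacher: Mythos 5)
Your proposal is correct and takes essentially the same route as the paper's proof: both chain the equalities of Lemmas \ref{fi1}--\ref{fi5}, using Lemma \ref{fi2} to identify all values with a fixed second index $l\ne i$, Lemma \ref{fi1} to pass between different such $l$, and Lemmas \ref{fi3} and \ref{fi5} to attach the remaining pairs with $l=i$. The only difference is organizational: because your column step via Lemma \ref{fi2} already ties every $\varphi(e_{ik},e_{kl},e_{lj})$ to the hub value $\varphi(e_{ij},e_{jl},e_{lj})$, the pairs with $k=j$ (in particular the case $i=j$, $k=i$, $l\ne i$) are absorbed into the bulk, so Lemma \ref{fi4} is never needed, whereas the paper invokes it for exactly that subcase.
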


\begin{proof}
    Assume first that $i\neq j$. We claim that 
    $$\Phi_{ij}=\{\varphi(e_{ij},e_{jj},e_{jj})\}.$$
    Consider $\varphi(e_{ik},e_{kl},e_{lj})$ where $k\neq j$ and $l\neq i$. Since $j\neq i$, by Lemma \ref{fi1} we have $\varphi(e_{ik},e_{kl},e_{lj})=\varphi(e_{ik},e_{kj},e_{jj})$. We now apply Lemma \ref{fi2} to get $\varphi(e_{ik},e_{kj},e_{jj})=\varphi(e_{ij},e_{jj},e_{jj})$.

We now consider the case where $l\neq i$ and $k=j$. Taking $u\neq j$, then Lemma \ref{fi2} shows that $\varphi(e_{ij},e_{jl},e_{lj})=\varphi(e_{iu},e_{ul},e_{lj})$. However, since $u\neq j$, $l\neq i$, we see from the the previous case that $\varphi(e_{iu},e_{ul},e_{lj})=\varphi(e_{ij},e_{jj},e_{jj})$.

We now consider the case where $j\neq k$ and $i=l$. If $k=i$, then by Lemma \ref{fi5} we have $\varphi(e_{ii},e_{ii},e_{ij})=\varphi(e_{ij},e_{jj},e_{jj})$. If $k\neq i$, then  $\varphi(e_{ik},e_{ki},e_{ij})=\varphi(e_{ik},e_{kk},e_{kj})$. Now it is enough to apply the first case to get $\varphi(e_{ik},e_{kk},e_{ki})=\varphi(e_{ij},e_{jj},e_{jj})$, as desired.

We may now consider the case where $i=j$. Fix $u\neq i$. We claim that    
\[
\Phi_{ii}=\{\varphi(e_{ii},e_{iu},e_{ui})\}.
\]

Assume first that $k\neq i$ and $l\neq i$. Then, by Lemma \ref{fi1}, $\varphi(e_{ik},e_{kl},e_{li})=\varphi(e_{ik},e_{ku},e_{ui})$ and, by Lemma \ref{fi2}, $\varphi(e_{ik},e_{ku},e_{ui})=\varphi(e_{ii},e_{iu},e_{ui})$.

If $k\neq i$ and $l=i$, then Lemma \ref{fi3} gives $\varphi(e_{ik},e_{ki},e_{ii})=\varphi(e_{ik},e_{kk},e_{ki})$. By the previous case we have $\varphi(e_{ik},e_{kk},e_{ki})=\varphi(e_{ii},e_{iu},e_{ui})$.

Finally, let  $k=i$ and $l\neq i$. By Lemma \ref{fi4},  $\varphi(e_{ii},e_{il},e_{li})=\varphi(e_{il},e_{ll},e_{li})$, and once again we have $\varphi(e_{il},e_{ll},e_{li})=\varphi(e_{ii},e_{iu},e_{ui})$.
\end{proof}

We now arrive at  the goal of  this subsection. 

\begin{theorem} \label{existenceoftau} Let $f = x_1 x_2 x_3 - x_3 x_2 x_1$.
 Then the algebra  $M_d(F)$ is $f$-zpd. 
\end{theorem}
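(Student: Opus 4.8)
The plan is to verify the criterion of Lemma \ref{equivalentconditiontau2}: given matrices $a^{(t)}, b^{(t)}, c^{(t)}$ satisfying (\ref{hh}), show that (\ref{hh2}) holds. All the preparatory work has been carried out in the preceding lemmas, so the proof should amount to assembling them. First I would start from the expression for $\sum_t \varphi(a^{(t)},b^{(t)},c^{(t)})$ provided by Lemma \ref{nova}, which already discards the vanishing terms and groups the surviving ones by the index pair $(i,j)$.

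The key observation is that Lemma \ref{corollaryresume} tells us that for each fixed pair $(i,j)$, the coefficient $\varphi(e_{ik},e_{kl},e_{lj})$ is the \emph{same scalar} for every admissible choice of intermediate indices $k,l$ (with $k\ne j$ or $l\ne i$). Denote this common value by $\lambda_{ij}$. Then I would substitute $\varphi(e_{ik},e_{kl},e_{lj}) = \lambda_{ij}$ into the formula from Lemma \ref{nova}. Since $\lambda_{ij}$ no longer depends on $k,l$, it factors out of the inner sums, and the expression collapses to
\[
\sum_{t=1}^{N}\varphi\bigl(a^{(t)},b^{(t)},c^{(t)}\bigr)
= \sum_{i,j=1}^{d}\lambda_{ij}\Biggl(\sum_{t=1}^{N}\Bigl(\sum_{l=1}^{d}\sum_{\substack{k=1\\k\ne j}}^{d}(a_{ik}^{t}b_{kl}^{t}c_{lj}^{t}-c_{ik}^{t}b_{kl}^{t}a_{lj}^{t})
+\sum_{\substack{l=1\\l\ne i}}^{d}(a_{ij}^{t}b_{jl}^{t}c_{lj}^{t}-c_{ij}^{t}b_{jl}^{t}a_{lj}^{t})\Bigr)\Biggr).
\]
The crucial point is that the quantity multiplying each $\lambda_{ij}$ is, for each fixed $(i,j)$, exactly the inner summand appearing in Lemma \ref{newhypothesis}, which that lemma asserts vanishes once summed over all $(i,j)$ --- but here I need it to vanish for each individual $(i,j)$.

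This is where the main obstacle lies: Lemma \ref{newhypothesis} only guarantees that the \emph{total} sum over all pairs $(i,j)$ is zero, whereas factoring out $\lambda_{ij}$ requires the $(i,j)$-block to vanish separately, or at least requires a more careful bookkeeping. I expect the resolution is that the matrix identity (\ref{hh}) holds entrywise: for each fixed $(i,j)$ one has $\sum_t\sum_{k,l}(a_{ik}^{t}b_{kl}^{t}c_{lj}^{t}-c_{ik}^{t}b_{kl}^{t}a_{lj}^{t})=0$, and the diagonal terms $k=j,\,l=i$ cancel as noted in the proof of Lemma \ref{newhypothesis}, so that the per-$(i,j)$ bracket above is indeed zero for each individual pair $(i,j)$. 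Thus each summand of the outer sum vanishes independently, giving $\sum_t\varphi(a^{(t)},b^{(t)},c^{(t)})=0$, which is precisely (\ref{hh2}). By Lemma \ref{equivalentconditiontau2} this establishes that $M_d(F)$ is $f$-zpd, completing the proof.
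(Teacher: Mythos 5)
Your proposal is correct and follows essentially the same route as the paper's own proof: substitute the common value $\varphi_{ij}$ supplied by Lemma \ref{corollaryresume} into the expression from Lemma \ref{nova}, exchange the order of summation, and invoke Lemma \ref{newhypothesis} for each pair $(i,j)$. The obstacle you flagged is illusory: in Lemma \ref{newhypothesis} the indices $i,j$ are free, so that lemma already asserts the vanishing for each individual pair $(i,j)$ (its proof works entrywise from (\ref{hh}), with the $k=j$, $l=i$ term dropping out identically), which is precisely the resolution you conjectured.
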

\begin{proof}
As already mentioned, in light of
 Lemma \ref{equivalentconditiontau2}
 it is enough to prove (\ref{hh2}). Consider the right-hand side of the identity given in Lemma \ref{nova}.
Denoting $\Phi_{ij}=\{\varphi_{ij}\}$ by Lemma \ref{corollaryresume}, we have

\begin{align*}
\sum_{t=1}^{N}\varphi \left( a^{(t)}, b^{(t)}, c^{(t)} \right) 
&= \sum_{t=1}^{N}\sum_{i,j=1}^{d} \Bigg( \sum_{l=1}^{d}\sum_{\substack{k=1\\
k\neq j}}^{d}a_{ik}^{t}b_{kl}^{t}c_{lj}^{t}-c_{ik}^{t}b_{kl}^{t}a_{lj}^{t} \\ & \ \ \ +\sum_{\substack{l=1\\
l\neq i}}^{d}a_{ij}^{t}b_{jl}^{t}c_{lj}^{t}-c_{ij}^{t}b_{jl}^{t}a_{lj}^{t} \Bigg)\varphi_{ij} \\
&= \sum_{i,j=1}^{d}\sum_{t=1}^{N} \Bigg( \sum_{l=1}^{d}\sum_{\substack{k=1\\
k\neq j}}^{d}a_{ik}^{t}b_{kl}^{t}c_{lj}^{t}-c_{ik}^{t}b_{kl}^{t}a_{lj}^{t} \\ & \ \ \ +\sum_{\substack{l=1\\
l\neq i}}^{d}a_{ij}^{t}b_{jl}^{t}c_{lj}^{t}-c_{ij}^{t}b_{jl}^{t}a_{lj}^{t} \Bigg)\varphi_{ij}.
\end{align*}
Invoking   Lemma \ref{newhypothesis} we now obtain the desired conclusion that
\begin{equation*}
\sum_{t=1}^{N}\varphi \left( a^{(t)}, b^{(t)}, c^{(t)} \right) =0.\qedhere\end{equation*}
\end{proof}

\section{A multilinear Nullstellensatz}\label{s4}

In this last section we consider the situation where $f$ and $g$ are multilinear polynomials of the same degree $m$ such that every zero of $f$ in $A^m$, where $A$ is an algebra, is also a zero of $g$;
 that is, for all $a_1,\dots,a_m\in A$,
$$f(a_1,\dots,a_m)=0\implies g(a_1,\dots,a_m)=0.$$
As we mentioned in the introduction,  this is a special case of the condition from 
Amitsur's Nullstellensatz \cite{Am}. On the other hand,
from Lemma \ref{l35} it is evident  that it is  also a special case of the condition from the definition of an $f$-zpd algebra. 
It is therefore natural to ask whether the problem of describing the relation between $f$ and $g$ can be solved in any $f$-zpd algebra. In the next proposition, we give a positive answer under the assumption that $f(1,\dots,1)\ne 0$.  It is not clear to us at present whether this assumption can be removed.

\begin{proposition}\label{pp}
Let $A$ be an  $F$-algebra and let $f,g\in F\langle x_1,x_2,\dots\rangle$ be multilinear polynomials of  degree $m$ such that every zero of $f$ in $A^m$ is a zero of $g$. If  $A$ is $f$-zpd and  $f(1,\dots,1)\neq0$, then there exist a $\lambda\in F$ and  a 
polynomial identity $h$ of $A$ such that $g=\lambda f+h$.
\end{proposition}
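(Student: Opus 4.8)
The plan is to combine the hypothesis that every zero of $f$ is a zero of $g$ with the $f$-zpd property of $A$, using the generalization provided by Lemma \ref{l35}. First I would observe that the map
\[
\Phi(a_1,\dots,a_m) = g(a_1,\dots,a_m)
\]
is an $m$-linear map from $A^m$ into the vector space $X = A$ itself (thinking of $A$ as an $F$-vector space), and that by assumption $\Phi$ preserves the zeros of $f$, i.e.\ $f(a_1,\dots,a_m)=0$ implies $\Phi(a_1,\dots,a_m)=0$. Since $A$ is $f$-zpd, Lemma \ref{l35} applies and yields a linear map $T\colon A\to A$ such that
\[
g(a_1,\dots,a_m) = T\bigl(f(a_1,\dots,a_m)\bigr)
\]
for all $a_1,\dots,a_m\in A$. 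This is the key structural step: it converts the zero-set inclusion into a genuine functional relation between $g$ and $f$ mediated by a single linear map.

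The remaining task is to extract from the linear map $T$ a \emph{scalar} $\lambda$, thereby isolating the coefficient-level relationship between $g$ and $f$. Here I would exploit the assumption $f(1,\dots,1)\neq 0$. Set $\lambda = f(1,\dots,1)^{-1} g(1,\dots,1)$, and define the polynomial
\[
h = g - \lambda f.
\]
By construction $h$ is a multilinear polynomial of degree $m$, and evaluating at $(1,\dots,1)$ gives $h(1,\dots,1) = g(1,\dots,1) - \lambda f(1,\dots,1) = 0$. The goal is then to show that $h$ is in fact a polynomial identity of $A$, that is, $h(a_1,\dots,a_m)=0$ for \emph{all} tuples, not merely the special tuple of ones.

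To prove that $h$ vanishes identically I would compare the two relations $g = T\circ f$ and the definition of $h$, and the main obstacle will be showing that $T$ acts as the scalar $\lambda$ on the relevant image $f(A^m)$. The natural approach is to use the substitution trick already employed in Lemma \ref{equivalentconditiontau}: plug in $f(a_1,\dots,a_m)$ as one of the arguments with the remaining arguments equal to $1$, using multilinearity to relate $g(f(\bar a),1,\dots,1)$ to $f(1,\dots,1)\cdot T(f(\bar a))$ and hence to $f(1,\dots,1)\cdot g(\bar a)$. Since $g = T\circ f$, this should force $T(y) = \lambda y$ for every $y$ in the linear span of $f(A^m)$, which is exactly where $g$ takes its values; consequently $h(a_1,\dots,a_m) = g(a_1,\dots,a_m) - \lambda f(a_1,\dots,a_m) = T(f(\bar a)) - \lambda f(\bar a) = 0$ for all tuples, so $h$ is a polynomial identity. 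The delicate point — and the reason $f(1,\dots,1)\neq 0$ is needed — is precisely that without an invertible ``evaluation at $1$'' one cannot pin $T$ down to a scalar on $f(A^m)$, which matches the authors' remark that it is unclear whether the hypothesis can be dropped.
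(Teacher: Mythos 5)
Your proposal is correct and follows essentially the same route as the paper: both apply Lemma \ref{l35} to obtain a linear map $T\colon A\to A$ with $g = T\circ f$, and both then exploit the evaluation at $(x,1,\dots,1)$, where multilinearity gives $f(x,1,\dots,1)=f(1,\dots,1)x$ and $g(x,1,\dots,1)=g(1,\dots,1)x$, to identify $T$ with multiplication by $\lambda = g(1,\dots,1)f(1,\dots,1)^{-1}$. The only immaterial difference is that the paper substitutes an arbitrary $a\in A$, concluding $T(a)=\lambda a$ on all of $A$, whereas you substitute $f(a_1,\dots,a_m)$ and pin $T$ down only on the span of $f(A^m)$, which is all that is needed.
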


\begin{proof}
It is clear from our assumptions that  
$$(a_{1},\dots,a_{m})\mapsto g(a_{1},\dots,a_{m})$$ 
is an $m$-linear map that preserves zeros of $f$.
 Lemma \ref{l35} therefore shows that there exists a linear map $T \colon A\to A$ satisfying $$g(a_{1},\dots,a_{m})=T(f(a_{1},\dots,a_{m})).$$ Thus, for every $a\in A$ we have
\[
g(1,\dots,1)a=g(a,1,\dots,1)=T(f(a,1,\dots,1))=f(1,\dots,1)T(a).
\]
Setting $\lambda=g(1,\dots,1)f(1,\dots,1)^{-1}$ we thus have
$T(a)=\lambda a$ for all  $a\in A$, and hence
 $$g(a_{1},\dots,a_{m})=\lambda f(a_{1},\dots,a_{m})$$ for all $a_1,\dots,a_m \in A$. This means that $h=g-\lambda f$ is a polynomial identity of $A$.
\end{proof}

\begin{corollary} \label{Zanpolynomial2}
Let {\rm char$(F)$} $\ne 2$, let $\alpha_1,\dots,\alpha_m\in F$ be such that
$\sum_{i=1}^m \alpha_i\ne 0$,
and let
$$
f(x_1, \ldots, x_m) = \alpha_1 x_1 \cdots x_m + \alpha_2 x_2 \cdots x_m x_1 +  \cdots + \alpha_m x_m x_1 \cdots x_{m-1}.$$
If an $F$-algebra $A$ is generated by idempotents and $g$ is a multilinear polynomial of degree $m$ such that every zero of $f$ in $A^m$ is a zero of $g$, 
 then there exist a $\lambda\in F$ and  a  polynomial identity $h$ of $A$ such that $g=\lambda f+h$.
\end{corollary}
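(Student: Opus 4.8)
The plan is to recognize that this corollary is precisely the conjunction of the two preceding results, Theorem \ref{Zanpolynomial} and Proposition \ref{pp}, so that the proof reduces to checking that their hypotheses are met by the data at hand. No new argument is needed; the genuine work has already been carried out in establishing that algebras generated by idempotents are $f$-zpd for polynomials of this cyclic type.

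Concretely, I would proceed in three short steps. First, I would invoke Theorem \ref{Zanpolynomial}: its hypotheses are that $\mathrm{char}(F)\ne 2$, that $\sum_{i=1}^m\alpha_i\ne 0$, and that $A$ is generated by idempotents, all of which are assumed here, so $A$ is $f$-zpd. Second, I would verify the remaining hypothesis of Proposition \ref{pp}, namely that $f(1,\dots,1)\ne 0$. This is immediate: substituting $x_1=\dots=x_m=1$ collapses every cyclic monomial $x_{\sigma(1)}\cdots x_{\sigma(m)}$ to $1$, whence
\[
f(1,\dots,1)=\alpha_1+\alpha_2+\dots+\alpha_m=\sum_{i=1}^m\alpha_i,
\]
which is nonzero by assumption. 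Third, with $A$ now known to be $f$-zpd, with $f(1,\dots,1)\ne 0$, and with the standing hypothesis that every zero of $f$ in $A^m$ is a zero of $g$, I would apply Proposition \ref{pp} directly to obtain a scalar $\lambda\in F$ and a polynomial identity $h$ of $A$ with $g=\lambda f+h$.

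There is no real obstacle in this deduction, since the substance lies entirely in the two cited results. The only point requiring any care is the clean bookkeeping that the coefficient-sum condition $\sum_{i=1}^m\alpha_i\ne 0$ serves double duty: it is simultaneously the nondegeneracy hypothesis of Theorem \ref{Zanpolynomial} and, through the evaluation $f(1,\dots,1)=\sum_i\alpha_i$, exactly the condition $f(1,\dots,1)\ne 0$ demanded by Proposition \ref{pp}. Once this coincidence is noted, the corollary follows by a one-line appeal to the two results in sequence.
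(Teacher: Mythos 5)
Your proposal is correct and follows exactly the paper's own proof, which cites Theorem \ref{Zanpolynomial} and Proposition \ref{pp} in the same way; your verification that $f(1,\dots,1)=\sum_{i=1}^m\alpha_i\ne 0$ is the only detail the paper leaves implicit, and it is checked correctly.
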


\begin{proof}
This is immediate from Theorem \ref{Zanpolynomial} and Proposition \ref{pp}.
\end{proof}

From now on we consider the case where $A=M_d(F)$. We recall from Proposition \ref{propertyofg} that the conclusion of Proposition \ref{pp} then does not always hold. Our goal is to show that it does hold if $m < 2d-3$. In fact, since, as is well known, $M_d(F)$  has no polynomial identities of degree less than $2d$, we will actually prove that $f$ and $g$ are linearly dependent. To this end, we start by introducing  the necessary notation.

In what follows, let
$\alpha_\sigma,\beta_\sigma\in F$ be such that
$$
f =  \sum_{\sigma\in S_m} \alpha_\sigma x_{\sigma(1)} \cdots x_{\sigma(m)},
$$
$$
g =  \sum_{\sigma\in S_m} \beta_\sigma x_{\sigma(1)} \cdots x_{\sigma(m)}.
$$
We set
$$\text{Supp}(f)=\{\sigma\in S_m \ | \ \alpha_\sigma \ne 0 \}$$
and similarly we define Supp$(g)$. Further, for each $\sigma\in S_m$ we write
$$(x_1,\ldots, x_m)_\sigma = x_{\sigma(1)} \cdots x_{\sigma(m)}.$$
Thus,  $$f=
\sum_{\sigma\in S_m} \alpha_\sigma (x_1,\ldots, x_m)_\sigma.$$ We will consider evaluations 
$(a_1,\ldots,a_m)_\sigma$
with $a_i\in M_d(F)$.

The next two definitions are standard in group theory.

\begin{definition}
 We define a metric 
$d$ on $S_m$ by letting
$d(\sigma_1,\sigma_2)$ to be the smallest nonnegative integer $k$ for which there exists a sequence of transpositions $\tau_1, \tau_2, \ldots, \tau_k \in S_m$  such that 
$
\tau_k \cdots  \tau_1 \sigma_1 = \sigma_2.
$
\end{definition}

The next definition concerns any subset $T$ of  $S_m$, but we  will be actually interested in the case where  $T= {\rm Supp}(f)$.

\begin{definition}
Let $T$ be a subset of $S_m$. We define an equivalence relation on $T$ as follows: $\sigma_1 \sim \sigma_2$ if and only if there exists a (possibly empty) sequence of transpositions $\tau_1, \tau_2 , \ldots, \tau_k$ such that 
\begin{itemize}
\item[(a)] $\tau_i  \cdots \tau_1 \sigma_1\in T$, $i = 1,\ldots,k-1$,
\vspace{0.1 cm}
\item[(b)] $\tau_k  \cdots  \tau_1  \sigma_1 = \sigma_2$.
\end{itemize}    
\end{definition}

The following theorem is  the main result of this section.

\begin{theorem} \label{Null}Let $f,g\in F\langle x_1,x_2,\dots\rangle$ be multilinear polynomials of  degree $m$ such that every zero of $f$ in $M_d(F)^m$ is a zero of $g$.
If 
 $m < 2d-3$, then there exists a $\lambda \in F$ such that $ f = \lambda g$.
\end{theorem}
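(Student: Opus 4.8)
The plan is to reduce everything to a statement about the coefficient vectors and then extract all information from evaluations at matrix units. Since $m<2d-3<2d$ and $M_d(F)$ has no polynomial identity of degree below $2d$, the zero polynomial is the only identity of degree $m$; hence it suffices to prove that $(\alpha_\sigma)_{\sigma\in S_m}$ and $(\beta_\sigma)_{\sigma\in S_m}$ are proportional. Assuming $f\neq 0$, I would fix $\sigma_0\in{\rm Supp}(f)$, set $\lambda=\beta_{\sigma_0}\alpha_{\sigma_0}^{-1}$, and aim at $\beta_\sigma=\lambda\alpha_\sigma$ for every $\sigma$. The basic tool is that under a substitution $x_i\mapsto e_{r_i,c_i}$ the monomial $(x_1,\dots,x_m)_\sigma$ equals $e_{r_{\sigma(1)},c_{\sigma(m)}}$ when the chain condition $c_{\sigma(k)}=r_{\sigma(k+1)}$ holds for $k=1,\dots,m-1$, and vanishes otherwise; the way one distributes the $d$ available indices among the $r_i,c_i$ dictates which permutations survive.

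\textbf{The distance-one relation.} Suppose $\sigma,\tau\in S_m$ are at Cayley distance one, say $\tau=(ab)\sigma$, so the words of $\sigma$ and $\tau$ differ only by interchanging the letters $x_a$ and $x_b$. I would freeze all variables other than $x_a,x_b$ at matrix units forming three path segments $P=e_{p_0,p_1}$, $Q=e_{q_0,q_1}$, $R=e_{r_0,r_1}$, chosen (this is where room is needed) so that among all of $S_m$ only $\sigma$ and $\tau$ survive, both landing in the single entry $e_{p_0,r_1}$. Writing $A=x_a,\ B=x_b$ and putting $A=\xi_1 e_{p_1,q_0}+\xi_2 e_{q_1,r_0}$, $B=\eta_1 e_{p_1,q_0}+\eta_2 e_{q_1,r_0}$ with all six indices distinct, a short computation ($PAQBR=\xi_1\eta_2 e_{p_0,r_1}$ and $PBQAR=\eta_1\xi_2 e_{p_0,r_1}$) gives
\[
f\longmapsto(\alpha_\sigma\,\xi_1\eta_2+\alpha_\tau\,\eta_1\xi_2)\,e_{p_0,r_1},\qquad g\longmapsto(\beta_\sigma\,\xi_1\eta_2+\beta_\tau\,\eta_1\xi_2)\,e_{p_0,r_1}.
\]
Taking $\xi_1=\alpha_\tau,\ \xi_2=-\alpha_\sigma,\ \eta_1=\eta_2=1$ makes the $f$-value vanish, forcing the $g$-value to vanish too, which yields the cross-ratio relation $\alpha_\sigma\beta_\tau=\alpha_\tau\beta_\sigma$.

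\textbf{Propagation.} For a pair $\sigma,\tau\in{\rm Supp}(f)$ this relation says $\beta_\sigma/\alpha_\sigma=\beta_\tau/\alpha_\tau$, so by the very definition of the equivalence relation on $T={\rm Supp}(f)$ (transpositions keeping one inside $T$) the ratio $\beta_\sigma/\alpha_\sigma$ is constant on each equivalence class. For a pair with $\sigma\in{\rm Supp}(f)$ but $\tau=(ab)\sigma\notin{\rm Supp}(f)$ (so $\alpha_\tau=0$), the same frame with the choice $\xi_1=0$ kills the $f$-value while leaving the $g$-value equal to $\beta_\tau\eta_1\xi_2\,e_{p_0,r_1}$; hence $\beta_\tau=0$, and iterating such steps shows that $\beta$ vanishes off ${\rm Supp}(f)$. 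It then remains to see that all classes carry the \emph{same} constant $\lambda$: for this I would exhibit one substitution in which the contributions of $f$ coming from two different classes are both nonzero and cancel, forcing their ratios to coincide. Combined, these give $\beta_\sigma=\lambda\alpha_\sigma$ for all $\sigma$, which (as $f\neq0$) is exactly the asserted linear dependence of $f$ and $g$.

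\textbf{The main obstacle.} The difficulty is purely combinatorial: realizing, with only $d$ indices, a frame whose unique surviving permutations are the prescribed adjacent pair and whose two survivors share a common matrix entry so that they can be made to cancel, together with the cross-class ``mixing'' substitution. This is precisely where $m<2d-3$ is forced: routing both $A$ and $B$ through the frame in the two opposite orders needs roughly twice as many indices as a single path, and $2d>m+3$ is the exact threshold allowing all the required indices to be kept distinct. The breakdown of these constructions at $m\ge 2d$ is mirrored by the central-polynomial example of Proposition \ref{propertyofg}, where two classes decouple and the ratios may genuinely differ.
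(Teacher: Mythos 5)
Your first half is sound and closely parallels the paper's own argument: the reduction via the absence of polynomial identities of degree $m<2d$, the staircase-of-matrix-units evaluations giving $\supp(g)\subseteq\supp(f)$, and the distance-one substitution (two variables replaced by linear combinations of two connecting matrix units, all others frozen along a path) forcing $\alpha_\sigma\beta_\tau=\alpha_\tau\beta_\sigma$ for $\tau=(a\,b)\sigma$. This is exactly the paper's relation (\ref{ecla}), and your frame is essentially the paper's Cases 1--4 there (the parity case analysis you elide is what makes ``only $\sigma$ and $\tau$ survive'' actually checkable, but that is a detail one can believe).

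The genuine gap is the step you compress into one sentence: ``exhibit one substitution in which the contributions of $f$ coming from two different classes are both nonzero and cancel.'' This is not a finishing touch; it is the core of the theorem and occupies most of the paper's proof. Two things are missing. First, such a substitution cannot be produced for an arbitrary pair of classes by a local frame: any evaluation rich enough to make two permutations from \emph{different} equivalence classes of $\sim$ simultaneously nonzero will in general have additional surviving permutations $\eta$, and you have no control over their coefficients. The paper handles this by a global extremal argument: it picks $\sigma_1,\sigma_2$ in classes with distinct ratios at \emph{minimal} Cayley distance $\ell$, reduces (after reindexing) to $\sigma_1=(1)$ and $\sigma_2$ a product of disjoint descending cycles, and then builds a block sequence $E$ of matrix units (blocks tailored to even cycles, single odd cycles, pairs of odd cycles, and mixtures) for which every extraneous surviving $\eta$ satisfies $d(\eta,(1))<\ell$ and $d(\eta,\sigma_2)<\ell$; minimality then forces $\eta\notin\supp(f)$, so these terms vanish and the cancellation argument closes. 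Without the minimality device, the extra terms are fatal. Second, the existence of the mixing substitution is precisely where the hypothesis $m<2d-3$ must be spent in a quantitative way (the paper needs $m_0+1\le d$ for its staircases, and the distance estimates $l_i\le s_i-s_{i-1}-1$ per cycle block); your proposal invokes the bound only heuristically. That this step cannot be waved through is shown by the paper's own Proposition \ref{propertyofg}: for large $m$ there exist $f,g$ with identical zero sets that are not proportional, so the cross-class substitution genuinely fails to exist beyond the degree threshold, and any proof must construct it rather than posit it.
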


\begin{proof} 
Set $m_0 = \frac{m}{2} +1$ if $m$ is even and $m_0 = \frac{m+1}{2}$ if $m$ is odd.
Note that $m_0+1\le d$ since 
$m+3 < 2d$.
Define a sequence $\bm{e}=(e_1,\dots,e_m)$ by setting
\[
e_1 = e_{11}, \ \ \ e_2 = e_{12}, \ \ \ e_3 = e_{22}, \ \ \ \ldots, \ \ \ 
e_m = \begin{cases}
  e_{m_0 -1, m_0}  & m \text{ even } \\
  e_{m_0, m_0} & m \text{ odd }
\end{cases}.
\]
For any $\sigma, \pi \in S_m$, we have 
$$
(e_{\sigma(1)}, \ldots, e_{\sigma(m)})_\pi = e_{\pi(\sigma(1))} \cdots e_{\pi(\sigma(m))} = \begin{cases}
  e_{1,m_0}  & \mbox{if } \sigma = \pi^{-1} \\
  0 & \text{otherwise}
\end{cases}.
$$
Therefore, for every $\sigma\in S_{m}$ we have
\begin{equation*}
    \begin{split}
        \alpha_\sigma = 0 & \implies f \left(e_{\sigma^{-1}(1)},\ldots,e_{\sigma^{-1}(m)} \right) = \alpha_\sigma e_{1, m_0} = 0  \\
        & \implies g \left(e_{\sigma^{-1}(1)},\ldots,e_{\sigma^{-1}(m)} \right) = \beta_\sigma e_{1, m_0} = 0 \\ & \implies \beta_\sigma = 0.
    \end{split}
\end{equation*}
We have thereby proved that \begin{equation}\label{es}
\supp(g)\subseteq \supp(f).\end{equation}

Take $\sigma\in {\rm Supp}(f)$ and write $\lambda = \alpha_\sigma^{-1}\beta_\sigma$, so that
$\beta_\sigma = \lambda\alpha_\sigma$.
 We claim that
 \begin{equation}\label{ecla}
 \beta_{\tau \sigma} = \lambda\alpha_{\tau\sigma}
 \end{equation}
 for every transposition
 $\tau$. Indeed, without loss of generality we may assume that $\sigma=(1)$ and we write $\tau=(p\,q)$ with $p<q$. We 
 consider four cases.
\\
  
{\bf Case 1:} $p$ and $q$ are both even. In this case $e_p$ and $e_q$ are square-zero matrices. Hence, considering the matrices
 
 \begin{equation*}
    \begin{split}
       a_{i} &= e_{i}, \  i \in\{1, \ldots, m\}\backslash \{p,q\}, \\
    a_{p} &= e_{p} + e_{q}, \\
    a_{q} &= \alpha_{(1)}e_{p} -\alpha_{\tau}e_{q},
    \end{split}
\end{equation*}
we  have 
$$
f(a_1,\ldots,a_m) = (\alpha_{\tau}\alpha_{(1)}-\alpha_{(1)}\alpha_{\tau})e_{1,m_0}=0.$$ 
This implies that
$$
0=g(a_1,\ldots,a_m) = (\beta_{\tau}\alpha_{(1)}-\beta_{(1)}\alpha_{\tau}) e_{1,m_0}= (\beta_{\tau}-\lambda\alpha_{\tau})\alpha_{(1)}e_{1,m_0},
$$
which yields (\ref{ecla}).

{\bf Case 2:} $p$ and $q$ are both odd. In this case both $e_p$ and $e_q$ are idempotents. We reduce this case to the previous one by considering a shift on the sequence $\bm{e}$, that is,
\[
\tilde{e}_1 = e_{12}, \quad \tilde{e}_2 = e_{22}, \quad \tilde{e}_3 = e_{23}, \quad  \ldots, \quad  
\tilde{e}_m = \begin{cases}
  e_{m_0, m_0},  & m \text{ even } \\
  e_{m_0, m_0+1}, & m \text{ odd }
\end{cases}.
\]
Now it is enough to perform the  evaluation at
 \begin{equation*}
    \begin{split}
       a_{i} &= \tilde{e}_{i}, \  i \in\{1, \ldots, m\}\backslash \{p,q\}, \\
    a_{p} &= \tilde{e}_{p} + \tilde{e}_{q}, \\
    a_{q} &= \alpha_{(1)}\tilde{e}_{p} -\alpha_{\tau}\tilde{e}_{q}
    \end{split}
\end{equation*}
and proceed as at the end of Case 1.

{\bf Case 3:} $p$ is odd and $q$ is even. In this case  $e_p$ is an idempotent but $e_q$ is not. The idea now is to consider a shift on the sequence from $e_p$ on. This shift will turn the element in the $q$-th position into an idempotent. So an additional change will be needed in this element as well. Precisely we take
\begin{eqnarray*}
\tilde{e}_1 = e_{11}, \quad \dots, \quad \tilde{e}_{p-1} = e_{\frac{p-1}{2},\frac{p-1}{2}+1}, \quad \tilde{e}_p = e_{\frac{p+1}{2},\frac{p+1}{2}+1}, \\
\tilde{e}_{p+1}=e_{\frac{p+1}{2}+1,\frac{p+1}{2}+1},\quad \dots, \tilde{e}_{q-1}=e_{\frac{q}{2},\frac{q}{2}+1}, \quad \tilde{e}_q=e_{\frac{q}{2}+1,\frac{q}{2}+2},\\
\tilde{e}_{q+1}=e_{\frac{q}{2}+2,\frac{q}{2}+2},\quad \dots, \quad \tilde{e}_{m} = \begin{cases}
  e_{m_0, m_0+1},  & m \text{ even } \\
  e_{m_0+1, m_0+1}, & m \text{ odd }.
\end{cases}
\end{eqnarray*}
We  consider the evaluation at
\begin{equation*}
    \begin{split}
       a_{i} &= \tilde{e}_{i}, \  i \in\{1, \ldots, m\}\backslash \{p,q\}, \\
    a_{p} &= \tilde{e}_{p} + \tilde{e}_{q}, \\
    a_{q} &= \alpha_{(1)}\tilde{e}_{p} -\alpha_{\tau}\tilde{e}_{q}
    \end{split}
\end{equation*}
and once again we proceed as in the end of Case 1.

{\bf Case 4:} $p$ is even and $q$ is odd. Here we have that $e_p$ as a square-zero matrix and $e_q$ is idempotent. We proceed similarly as in the previous case. The difference, however, is that no shift is needed at the beginning, just the change in the elements at the $q$-th position. 

This completes the proof of our claim.\\

Let $[\sigma]$ denote the equivalence class of $\sigma$ in $\supp(f)/\sim$. Write $\lambda_{[\sigma]}$ for $\lambda$. Observe that 
(\ref{ecla}) implies that
\begin{equation}
    \label{inv}
\beta_{\sigma'} = \lambda_{[\sigma]} \alpha_{\sigma'}\end{equation}
for all $\sigma'\in [\sigma]$. 

In view of (\ref{es}) and  (\ref{inv}),
we are left to prove that $\lambda_A=\lambda_B$ for all equivalence classes $A,B$ in ${\rm \supp}(f)/\sim$.
Assume this is not true and consider a pair of permutations $\sigma_1$ and $\sigma_2$ such that
\begin{equation}\label{min1}
d(\sigma_1, \sigma_2) = \min_{\substack{\pi_1\in A, \pi_2\in B \\ A,B\in \supp(f)/\sim\\ \lambda_A\ne \lambda_B}}d(\pi_1, \pi_2) =: \ell.
\end{equation}

Let $f_\psi$ denote the reindexing of $f$ through the permutation $\psi$, i.e.,
$$f_{\psi}=\sum_{\sigma\in S_{m}} \alpha_{\sigma}x_{\psi\sigma(1)}\cdots x_{\psi\sigma(m)}.$$
It is not difficult to prove that
\begin{equation}\nonumber
\ell=\min_{\substack{\pi_1 \in A,\pi_2 \in B \\ A,B \in \supp(f_{\psi})/\sim \\ \lambda_A \neq \lambda_{B}}}d(\pi_{1},\pi_{2}).
\end{equation}
This means that the minimum $\ell$ in (\ref{min1}) is invariant under reindexing of the variables in both $f$ and $g$.

Reindexing, if necessary, we may therefore assume that
$\sigma_2 \sigma_1^{-1}$ is the product of disjoint cycles 
\[
\sigma_2 \sigma_1^{-1}= (s_1 \ s_1-1\,\cdots\,1) \cdots (s_h \ s_h-1 \,\cdots \, s_{h-1}+1).
\]
Since disjoint cycles commute, we may also assume that the first $p$ cycles are of even length and the remaining $h-p$ are of odd length, where $0\le  p \le h$. 

Setting $s_0 = 0$ and taking into account for instance \cite{George}, we have  
\[
\ell=s_h-h=\sum_{i=1}^{h} s_{i}-s_{i-1}-1.
\]

Finally,  we also assume that $\sigma_1= (1)$. 
Only minor adjustments in the proof are needed if 
 $\sigma_1$ is an arbitrary permutation, which, however, make the reading more difficult.
Thus, from now one we will be dealing with the permutations $(1)$ and $$
\sigma_2 = (s_1 \ s_1-1\,\cdots\,1) \cdots (s_h \ s_h-1 \,\cdots \, s_{h-1}+1).
$$ We have  $\lambda_{[(1)]} \ne \lambda_{[\sigma_2]}$.

In order to obtain a contradiction, our goal will be to construct a sequence $E \in M_d(F)^m$ such that 
$$
E_{(1)} = e_{1,m_{0}}, \ \ \ 
E_{\sigma_2} = -\dfrac{\alpha_{(1)}}{\alpha_{\sigma_2}}e_{1,m_{0}}, \ \ \
E_{\sigma} = 0 \ \mbox{for all} \ \sigma \in \supp(f)\setminus{ \{(1),\sigma_2\}}.
$$
This will imply
\[
f(E) = \alpha_{(1)}e_{1,m_{0}}-\alpha_{\sigma_2}\frac{\alpha_{(1)}}{\alpha_{\sigma_2}}e_{1,m_{0}} = 0,
\]
hence
\[
g(E) = \lambda_{[(1)]}\alpha_{(1)}e_{1,m_{0}}-\lambda_{[\sigma_2]}\alpha_{\sigma_2}\frac{\alpha_{(1)}}{\alpha_{\sigma_2}}e_{1,m_{0}} = (\lambda_{[(1)]} - \lambda_{[\sigma_2]})\alpha_{(1)}e_{1,m_{0}} = 0,
\]
ans so $\lambda_{[(1)]} = \lambda_{[\sigma_2]}$,  contrary to the assumption. 

%%%%%%%%%%%%%%%%%%%%%%%%%%%%%%%%%%%%
%Write $U = S_m \backslash \supp(f)$. 
%%%%%%%%%%%%%%%%%%%%%%%%%%%%%%%%%%%%

%Consider the following transpositions:
%\begin{align*}
%&\tau_i = (i \ i+1), i = 1, \ldots, s_1 -1,
%\\
%&\tau_j = (j+1 \ j+2), j = s_1, \ldots, s_2 -1,
%\\
%&\tau_\ell = (s_h-1 \ s_h).
%\end{align*}
%
%Moreover, for $\tau_1 = (1,2)$, $\tau_2 = (2,3), \ldots, \tau_{s_1-1} = (s_1 - 1, s_1)$, $\tau_{s_1} = (s_1+1,s_1+2)$, $\ldots$, $\tau_{\ell} = (s_h-1, s_h)$,
%We get that $$
%\tau_\ell  \cdots  \tau_1 \sigma_1 = \sigma_2.
%$$

%Write $U = S_m \backslash \supp(f)$.
%%

\smallskip

As the first step, we shall construct the sequence $E$ in three particular cases. We will see at the end that the general case will follow from these three ones.

\smallskip
{\bf Case 1:} $p=h$ (all cycles are of even length). 

\smallskip

By assumption,  $s_i = 2q_i$,  $i = 1, \ldots, h$.  We introduce the sequence $E$  in blocks as follows:
$$
E = (E_1, \ldots, E_h, E_{h+1}),
$$
\begin{itemize}
\item[•] $E_1 = e_{11},e_{12},e_{22},\ldots,e_{q_1-1,q_1},e_{q_1,q_1}+e_{q_1,q_1+1}, e_{q_1,q_1+1}+e_{11},$
\vspace{0.1 cm}
\item[•] $E_2 = e_{q_1+1,q_1+1},e_{q_1+1,q_1+2},\ldots, e_{q_2-1,q_2}, e_{q_2,q_2}+e_{q_2,q_2+1}$, \\  $ \hspace*{1 cm} e_{q_2,q_2+1}+e_{q_1+1,q_1+1}$, \\
$\hspace*{0.2 cm} \vdots$
\vspace{0.1 cm}
\item[•] $E_h = e_{q_{h-1}+1,q_{h-1}+1}, \ldots, e_{q_h-1,q_h},e_{q_h,q_h}+e_{q_h,q_h+1}$, \\ $\hspace*{1 cm} e_{q_h,q_h+1}-\dfrac{\alpha_{(1)}}{\alpha_{\sigma_2}}e_{q_{h-1}+1,q_{h-1}+1},$
\item[•] $E_{h+1} = e_{q_{h}+1,q_{h}+1}, e_{q_{h}+1,q_{h}+2}, \ldots, e_{m',m_0}$.
\end{itemize}
Here $m' = m_0$ if $m$ is odd and $m' = m_0-1$ otherwise (recall that $m_0$ is defined at the beginning of the proof).

%Let
%\begin{equation*}
%    \begin{split}
%        E=&  (e_{11},e_{12},e_{22},\ldots,e_{q_1-1,q_1},e_{q_1,q_1}+e_{q_1,q_1+1}, e_{q_1,q_1+1}+e_{11}, \\
%        & e_{q_1+1,q_1+1},e_{q_1+1,q_1+2},\ldots e_{q_2-1,q_2}, e_{q_2,q_2}+e_{q_2,q_2+1},  e_{q_2,q_2+1}+e_{q_1+1,q_1+1},\\& \ldots,\\& e_{q_{h-1}+1,q_{h-1}+1},  e_{q_{h-1}+1,q_{h-1}+2},\ldots, e_{q_h-1,h_2},e_{q_h,q_h}+e_{q_h,q_h+1}, \\ &\,\,\,\,\,\,\,
%    e_{q_h,q_h+1}-\dfrac{\alpha_{\sigma_1}}{\alpha_{\sigma_2}}e_{q_{h-1}+1,q_{h-1}+1},\\ &e_{q_{h}+1,q_{h}+1}, e_{q_{h}+1,q_{h}+2}, \ldots).
%    \end{split}
%\end{equation*}
%Note that $E$ consists of $h+1$ ``blocks'', with the first one ending with $e_{q_1,q_1 +1}+e_{11}$, the second one ending with $e_{q_2,q_2 +1}+e_{q_1 +1,q_1 +1}$, and so on until the $h$-th block ending with $e_{q_h,q_h +1}+e_{q_{h-1}+1,q_{h-1}+1}$. The $(h+1)$-th block is defined as the remaining matrix units at the end of the sequence $E$. 

One can easily see that the nonzero products of matrices in $E$ are only obtained by joining the nonzero evaluations of each block, in the increasing order of the blocks.

For $i = 1,2,\ldots, h$, define the following sets of permutations:
$$
R_i = \{ (1), (s_i \, \cdots \, s_{i-1} + 3 \, \ s_{i-1} + 2), (s_i \, \cdots \, s_{i-1} + 2 \ \, s_{i-1} + 1)\}.
$$

Then we get
\[
E_\sigma = \begin{cases}
  e_{1,m_{0}} & \mbox{if} \ \sigma=(1) \\
  -\frac{\alpha_{(1)}}{\alpha_{\sigma_2}}e_{1,m_{0}} & \mbox{if} \ \sigma=\sigma_2 \\
  \mu_\sigma e_{1,m_{0}} & \mbox{if} \ \sigma = \pi_1  \cdots \pi_h, \  \pi_i\in R_i, \ \sigma \not \in \{ (1), \sigma_2 \}  \\
  0 & \text{otherwise,}
\end{cases}
\]
where $ \mu_\sigma \in F$.

In order to complete the proof of this case we are left to show that actually the permutations $\eta$ of the third item of $E_\sigma$ giving a nonzero evaluation are not elements of $\supp(f)$. To this end let us prove the following facts. 
\begin{itemize}
\item[•] $d(\eta, (1)) < \ell$.

\smallskip
\noindent This follows from a direct comparison between $\eta$ and $\sigma_2$. Indeed, let $\eta = \pi_1 \cdots \pi_h$, $\pi_i \in R_i$. Since $\eta \ne \sigma_2$, we have that at least one of the $\pi_i$'s is not equal to $ (s_i \, \cdots \,  s_{i-1} + 2 \, \ s_{i-1} + 1)$. Hence we have a fewer number of transpositions in the decomposition of $\eta$ than in that  of $\sigma_2$. As a consequence we obtain $d(\eta, (1)) < \ell$, as desired.

\vspace{0.2 cm}

\item[•] $d(\eta, \sigma_2) < \ell$.

\smallskip
\noindent As before let $\eta = \pi_1 \cdots \pi_h$, $\pi_i \in R_i$. Since $\eta \ne (1)$, we have that at least one of the $\pi_i$'s is not equal to $ (1)$. Now consider $\sigma_2  \eta^{-1}$. If $\eta$ involves a cycle of the form $(s_{i} \,\dots \, s_{i-1}+2 \, \ s_{i-1}+1)$, then the elements from the set $\{s_{i},\dots,s_{i-1}+2,s_{i-1}+1\}$ are fixed in $\sigma_2  \eta^{-1}$. The outcome of this is that, with respect to the $i$-th block, we have fewer transpositions in $\sigma_2 \eta^{-1}$ than in $\sigma_2$ and we are done in this case. The other possibility is that a cycle of the form $(s_{i} \,\cdots \, s_{i-1}+3 \, \ s_{i-1}+2)$ occurs in $\eta$. In this case, in the $i$-th block of $\sigma_{2} \eta^{-1}$ only the transposition $(s_{i-1}+1 \ s_{i})$ appears. Since $1< 3\leq s_{i}-s_{i-1}-1$, we reach the desired conclusion $d(\eta, \sigma_2) < \ell$.
\end{itemize}

According to the above two facts we can complete the proof of this case. If $\lambda_{[\eta]} \ne \lambda_{[(1)]} $, we immediately get a contradiction to the minimality of $\ell$ since we have proved that $d(\eta, (1)) < \ell$. So assume that $\lambda_{[\eta]} = \lambda_{[(1)]} $. Since by hypothesis $\lambda_{[\sigma_2]} \ne \lambda_{[(1)]} $, we obtain that $\lambda_{[\eta]} \ne \lambda_{[\sigma_2]} $. Again we get a contradiction to the minimality of $\ell$ since we have proved that $d(\eta, \sigma_2) < \ell$.

%
%
%We claim that, if $\pi_1  \cdots \pi_h\notin \{(1),\sigma_2\}$, for $\pi_i\in R_i$, then $\pi_1  \cdots \pi_h\in U$. Indeed, let $\eta$ be a permutation with nonzero evaluation on $E$ such that $\eta\notin\{(1),\sigma_2\}$. Recall that $\ell=\sum_{i=1}^{h} s_{i}-s_{i-1}-1$. Of course $d(\eta,(1))< \ell $ (it follows from direct comparison between $\eta$ and $\sigma_2$.) Now let us prove that $d(\eta,\sigma_2)< \ell$.  Taking into account $\sigma_2  \eta^{-1}$, we see that if in $\eta$ occurs a cycle of the form $(s_{i},\dots, s_{i-1}+2,s_{i-1}+1)$, then the elements from the set $\{s_{i},\dots,s_{i-1}+2,s_{i-1}+1\}$ are fixed in $\sigma_2  \eta^{-1}$ (that is, with respect to the $i$-th block, we have less transpositions in $\sigma_2 \eta^{-1}$ than in $\sigma_2$). If occurs a cycle of the form $(s_{i},\dots,s_{i-1}+3,s_{i-1}+2)$, then in $\sigma_{2} \eta^{-1}$ will appear only one transposition $(s_{i-1}+1,s_{i})$ in the $i$-th block, and here we get that $1< 2\leq s_{i}-s_{i-1}-1$. Finally, if the identity permutation appears in the $i$-th block, then $(s_{i},\dots,s_{i-1}+2,s_{i-1}+1)$ must appear in $\sigma_2  \eta^{-1}$. Since at least one non trivial permutation from some $R_{i}$ must occur in $\eta$, this gives us that $d(\sigma_2,\eta)< \ell$. We have proved that $d(\eta,(1))< \ell$ and $d(\eta,\sigma_2)< \ell$. Now it is enough to use the minimality of $\ell$ to get $\eta \in U$.

\smallskip
{\bf Case 2:} $h=1$ and $s_1 = 2q + 1$ is odd. 
\smallskip

Consider the sequence $E = (E_1, E_2)$ given in two blocks as follows:
\begin{itemize}
\item[•] $E_1 = e_{11},e_{12},e_{22},\ldots,e_{q,q+1}, e_{q+1,q+1}-\dfrac{\alpha_{(1)}}{\alpha_{\sigma_2}}e_{11},$
\vspace{0.1 cm}
\item[•] $E_2 = e_{q+1,q+2},e_{q+2,q+3},\ldots, e_{m',m_0}$.
\end{itemize}

%Let $s_1 = 2q+1$. Define
%\[
%E = (e_{11},e_{12},e_{22},\ldots,e_{q,q+1}, e_{q+1,q+1}-\frac{\alpha_{\sigma_1}}{\alpha_{\sigma_2}}e_{11},e_{q+1,q+2},e_{q+2,q+3},\ldots).
%\]
%Note that $E$ is given by two blocks, where the first one ends with the difference of matrix units. 

It is not difficult to see that
\[
E_\sigma = \begin{cases}
e_{1,m_{0}}, & \mbox{if} \ \sigma=(1) \\
-\frac{\alpha_{(1)}}{\alpha_{\sigma_2}}e_{1,m_{0}}, & \mbox{if} \ \sigma\in\{\sigma_2,(s_{1} \, \cdots \,  3 \ 2)\} \\
 0, & \text{otherwise}
\end{cases}.
\]
The proof of this case is complete since it is sufficient to observe that $(s_1 \, \cdots \,  3 \ 2) \not \in \supp(f)$.

\smallskip
{\bf Case 3:}  $h=2$, $p=0$ ($2$ odd cycles). 
\smallskip

In this case we have that $s_1 = 2q_1+1$ and $s_2 = 2 q_2$. 

Consider the sequence $E = (E_1, E_2)$ given in two blocks as follows:
\begin{itemize}
\item[•] $E_1 = e_{11},e_{12},e_{22},\ldots,e_{q_1,q_1+1}, e_{q_1+1,q_1+1}+e_{11}, e_{q_1+1,q_1+2}, \\ 
\hspace*{1 cm} e_{q_1+2,q_1+2},\ldots, e_{q_2-1,q_2},e_{q_2,q_2}+e_{q_2,q_2+1}, e_{q_2,q_2+1}-\dfrac{\alpha_{(1)}}{\alpha_{\sigma_2}}e_{q_1+1,q_1+1},$
\item[•] $E_2 = e_{q_2+1,q_2+1}, e_{q_2+1,q_2+2}, \ldots, e_{m',m_0}$.
\end{itemize}

%
%
%Let $2q_1+1=s_1$, $2q_2=s_2$. Define
%\begin{equation*}
%    \begin{split}
%        E =& (e_{11},e_{12},e_{22},\ldots,e_{q_1,q_1+1}, e_{q_1+1,q_1+1}+e_{11}, 
%         e_{q_1+1,q_1+2}, \\ & e_{q_1+2,q_1+2},\ldots e_{q_2-1,q_2},e_{q_2,q_2}+e_{q_2,q_2+1}, e_{q_2,q_2+1}-\frac{\alpha_{\sigma_1}}{\alpha_{\sigma_2}}e_{q_1+1,q_1+1}, \\ & e_{q_2+1,q_2+1}, e_{q_2+1,q_2+2}, \ldots ).
%    \end{split}
%\end{equation*}
%Here we consider $E$ in two blocks (the first one ends with the difference of matrix units).

Define the following two sets of permutations:
\begin{equation*}
\begin{split}
R_1 =& \ \{ (1), (s_1 \, \cdots \, 3 \ 2), (s_1 \, \cdots \, 2 \ 1)\},\\
R_2 =& \ \{(1), (s_{2} \, \cdots \, s_{1}+2 \ s_{1}+1)\}.
\end{split}
\end{equation*}

One can directly see  that 
\[
E_\sigma = \begin{cases}
  e_{1,m_{0}}, & \mbox{if} \ \sigma=(1) \\
  -\frac{\alpha_{(1)}}{\alpha_{\sigma_{2}}}e_{1,m_{0}}, & \mbox{if} \ \sigma=\sigma_2 \\
  \mu_\sigma e_{1,m_0},  & \mbox{if} \ \sigma = \pi_1 \pi_2, \ \pi_i\in R_i, \ \sigma \not \in \{ (1), \sigma_2 \}  \text{ or }  \sigma = (s_2 \, \cdots \, s_{1})
  \\
  0, & \text{otherwise},
\end{cases}
\]
where $\mu_\sigma \in F$.

Let $\eta$ be a permutation of the third item of $E_\sigma$ giving a nonzero evaluation. In order to complete the proof of this case we need to show that  $\eta \not \in \supp(f)$. Assume first that $\eta = \pi_1 \pi_2$, where $\pi_i$'s are permutations from $R_{i}$. In this case we are in the  situation of Case 1 and, proceeding in a similar manner, we arrive at the desired conclusion. Now suppose that $\eta = (s_{2} \, \cdots \, s_{1}+1 \ s_{1})$. In this case we obtain that 
$$
\sigma_2  \eta^{-1} = (s_{1} \ s_{2} \ s_{1}-1 \ s_{1}-2 \,\cdots\, s_{0}+2 \ s_{0}+1),
$$
which decomposes into $s_{1}-s_{0}$ transpositions. Since $s_{2}-s_{1}-1\geq 2$ (otherwise we would have a cycle of length $1$ in $\sigma_2$ and we could just ignore it), we get
$$
s_{1}-s_{0}<s_{1}-s_{0}+s_{2}-s_{1}-2=(s_{1}-s_{0}-1)+(s_{2}-s_{1}-1)=\ell.
$$  
This shows that $d(\sigma_2,\eta)< \ell$. Analogously we have that $d((1),\eta)< \ell$. In fact $(s_{2} \, \cdots \, s_{1}+1 \ s_{1})$ decomposes into $s_{2}-s_{1}$ transpositions, which is less than $(s_{1}-s_{0}-1)+(s_{2}-s_{1}-1)$. Using the same approach at the end of Case 1, we get the desired conclusion also in this case.

%
% which gives us a nonzero evaluation of the matrices $E$. If $\eta$ considers paths given by those permutations in $R_{i}$, then we are in the same situation as for even length. Assume then that the path is done by the permutation $(s_{2},\dots,s_{1}+1,s_{1})$. Note then that in computing $\sigma_2  \eta^{-1}$ we have $(s_{1},s_{2},s_{1}-1,s_{1}-2,\dots,s_{0}+2,s_{0}+1)$, which decomposes into $s_{1}-s_{0}$ transpositions. Note that $s_{2}-s_{1}-1\geq 2$ (otherwise we would have a cycle of length $1$ in $\sigma_2$ and we could just ignore it). Hence
%$$
%s_{1}-s_{0}<s_{1}-s_{0}+s_{2}-s_{1}-2=(s_{1}-s_{0}-1)+(s_{2}-s_{1}-1)=\ell
%$$  
%This shows that $d(\sigma_2,\eta)< \ell$. Analogously we have that $d((1),\eta)< \ell$, since $(s_{2},\dots,s_{1}+1,s_{1})$ decomposes into $s_{2}-s_{1}$ transpositions which is less than $(s_{1}-s_{0}-1)+(s_{2}-s_{1}-1)$.

\smallskip

In order to complete the proof of the theorem we are left to analyze the general situation. Recall that
$$
\sigma_2 = (s_1\,s_1-1\,\cdots\,1) \cdots (s_h\,s_h-1\,\cdots\,s_{h-1}+1),
$$
where the first $p$ cycles are of even length ($s_i = 2q_i$) and the remaining $h-p$ are of odd length, $p \in \{ 0, 1, \ldots, h \}$. Note that $h-p > 0$, otherwise we are in Case 1.  

Now distinguish two situations: $h-p$ odd or $h-p$ even.  

Suppose first that $h-p = 2k + 1$ is odd. In this case we construct the sequence $E = (G_1, \ldots, G_p, G_1', \ldots, G_k', G_{k+1}')$ in blocks as follows:
\begin{itemize}
\item[•] 
for the blocks $G_i$, $i = 1, \ldots, p$, we use the idea of Case 1. More precisely we put
\begin{equation*}
\begin{split}
G_i &= e_{q_{i-1}+1,q_{i-1}+1}, e_{q_{i-1}+1,q_{i-1}+2}, e_{q_{i-1}+2,q_{i-1}+2}, \ldots,
e_{q_{i}-1,q_{i}}, \\
& \ \ \ \  e_{q_{i},q_{i}} + e_{q_{i},q_{i}+1}, 
e_{q_{i},q_{i}+1} + e_{q_{i-1}+1,q_{i-1}+1},
\end{split}
\end{equation*}
where we assume that $q_0 = 0$;
\vspace{0.1 cm}

\item[•] for the blocks $G_j'$, $j= 1, \ldots, k$, we mimic the first block of matrices (called $E_1$) given in Case 3, more  precisely
\begin{equation*}
\begin{split}
G_j' &= e_{q_{j-1+p}+1, q_{j-1+p}+1}, e_{q_{j-1+p}+1, q_{j-1+p}+2},
\ldots, e_{q_{j+p}, q_{j+p}+1}, \\
& \ \ \ \ e_{q_{j+p}+1, q_{j+p}+1} + e_{q_{j-1+p}+1, q_{j-1+p}+1}, e_{q_{j+p}+1, q_{j+p}+2}, \\
& \ \ \ \ e_{q_{j+p}+2, q_{j+p}+2}, \ldots, e_{q_{j+p+1}-1, q_{j+p+1}}, \\
& \ \ \ \ e_{q_{j+p+1}, q_{j+p+1}} + e_{q_{j+p+1}, q_{j+p+1}+1}, e_{q_{j+p+1}, q_{j+p+1}+1} + e_{q_{j+p}+1, q_{j+p}+1};
\end{split}
\end{equation*}
\vspace{0.1 cm}

\item[•] the block $G_{k+1}'$ is constructed as in Case 2:
\begin{equation*}
\begin{split}
G_{k+1}' &= e_{q_{p+k+1}+1, q_{p+k+1}+1}, 
e_{q_{p+k+1}+1, q_{p+k+1}+2}, \ldots,
e_{q_{p+k+2}, q_{p+k+2}+1}, \\
& \ \ \ \  e_{q_{p+k+2}+1, q_{p+k+2}+1} - 
\frac{\alpha_{(1)}}{\alpha_{\sigma_2}} e_{q_{p+k+1}+1, q_{p+k+1}+1}, \\ 
& \ \ \ \ e_{q_{p+k+2}+1, q_{p+k+2}+2}, e_{q_{p+k+2}+2, q_{p+k+2}+2}, \ldots, e_{m', m_0}.
\end{split}
\end{equation*}
\end{itemize}

%
%
%
%
%\begin{itemize}
%\item[•] for the blocks $G_i$, $i= 1, \ldots, p$, we just use the corresponding $E_i$'s of Case 1 (recall that $p = h$ in Case 1);
%\vspace{0.1 cm}
%
%\item[•] for the blocks $G_j'$, $j= 1, \ldots, k$, we use the first block of matrices (called $E_1$) given in Case 3, according to the corresponding cycles;
%
%\vspace{0.1 cm}
%
%\item[•] the block $G_{k+1}'$ is constructed by using the entire sequence $E$ given in Case 2, according to the corresponding cycle.
%\end{itemize}
%
%\textcolor{red}{I think that the sequence $E$ constructed in the general case is not correct. See that when we join $G_p$, $G_1^\prime,\dots,  G_k^\prime$, in the way that we wrote, we are also joining the scalar $-\alpha_1/\alpha_{\sigma_2}$ in all this blocks. But this scalar should appear only in the last block of the sequence $E$ from the general case (right?). So, we should write something that when we join that sequence from the previous cases which has $-\alpha_1/\alpha_{\sigma_2}$, we should ignore this scalar and write just a sum (just a plus sign).}

%
%
%For the first $p$ cycles of even length we just apply Case 1.
%
%Now we have to distinguish two situations: $h-p$ odd or $h-p$ even. 
%
%Suppose first that $h-p = 2k + 1$ is odd. 
%We replace the last block of the sequence $E$ given by the first $p$ cycles by the first block of Case 3, $k$ times, and finally we join the sequence of matrices given in Case 2. 

Now assume that $h-p = 2k$ is even. In this case we do not have the last block $G_{k+1}'$. All the other ones are constructed as in the previous case, except for the last one $G_k'$, which becomes:
\begin{equation*}
\begin{split}
G_k' &= e_{q_{k-1+p}+1, q_{k-1+p}+1}, e_{q_{k-1+p}+1, q_{k-1+p}+2},
\ldots, e_{q_{k+p}, q_{k+p}+1}, \\
& \ \ \ \ e_{q_{k+p}+1, q_{k+p}+1} + e_{q_{k-1+p}+1, q_{k-1+p}+1}, e_{q_{k+p}+1, q_{k+p}+2}, \\
& \ \ \ \ e_{q_{k+p}+2, q_{k+p}+2}, \ldots, e_{q_{k+p+1}-1, q_{k+p+1}}, \\
& \ \ \ \ e_{q_{k+p+1}, q_{k+p+1}} + e_{q_{k+p+1}, q_{k+p+1}+1}, 
e_{q_{k+p+1}, q_{k+p+1}+1} - 
\frac{\alpha_{(1)}}{\alpha_{\sigma_2}} e_{q_{k+p}+1, q_{k+p}+1}, \\
& \ \ \ \ e_{q_{k+p+1}+1, q_{k+p+1}+1}, e_{q_{k+p+1}+1, q_{k+p+1}+2}, \ldots, e_{m', m_0}.
\end{split}
\end{equation*}

%
% We proceed similarly as in the previous case but with the following modifications: we do not have the last block $G_{k+1}'$ and we construct $G_k'$ by using not just the first block of matrices given in Case 3 but the whole sequence.

%
%Now we consider the general case. Here we have two situations: $h-p$ odd or $h-p$ even. Let us first deal with the case where $h-p$ is odd. For the first $p$ cycles (of even length) we just apply Case 1. If $h-p=1$, then we replace the last block of the sequence $E$ given by first $p$ cycles, by the sequence of matrices given in Case 2. If $h-p\geq 3$, then write $h-p=2k+1$, $k$ a positive integer. Then we replace the last block of the sequence $E$ given by the first $p$ cycles by the first block of Case 3, $k$ times, and finally we join the sequence of matrices given in Case 2 (notice that if $h-p$ is even, then we just do not join this last sequence of matrices given in Case 2).

In both cases, we get that the sequence $E$ is such that
$$
E_{(1)} = e_{1,m_{0}}, \ \ \ 
E_{\sigma_2} = -\dfrac{\alpha_{(1)}}{\alpha_{\sigma_2}}e_{1,m_{0}}, \ \ \
E_{\sigma} = 0 \ \mbox{for all} \ \sigma \in \supp(f)\setminus{ \{(1),\sigma_2\}}.
$$

In fact, each permutation $\eta \notin\{(1),\sigma_2\}$, giving a nonzero evaluation of the matrices in the sequence $E$, does not belong to $\supp(f)$. Indeed, in computing  $d(\eta,(1))$ and 
$d(\eta,\sigma_2)$, we always have a sum 
\[
l_1+\cdots +l_h
\]
where, for each $i$,  $l_{i}\leq s_i -s_{i-1}-1$ or $l_{i+1}+l_{i}\leq (s_{i+1}-s_{i}-1)+(s_{i}-s_{i-1}-1)$, and for at least one $i$ we have that the inequality is strict.
\end{proof}

We remark that $2d-3$ may not be the optimal bound. Finding the maximal number $N(d)$ such that $m < N(d)$ implies that $f$ is a linear combination of $g$ and a polynomial identity is left as an open question.

%\begin{example}
%If $d=2$, then the assumption in Theorem \ref{Null} reads as $m\le 3$. Proposition \ref{property of g} along with Example \ref{e5} show that the conclusion of the theorem does not hold if $m=5$. 
%The assumption is thus necessary and, in general, cannot be wakened.
%\end{example}

\end{document}